\documentclass[11pt,a4paper,reqno]{amsart}
\usepackage{amsaddr}
\usepackage{amsmath,amsfonts,amssymb,amsthm,mathrsfs}
\usepackage[utf8]{inputenc}
\usepackage[T1]{fontenc}
\usepackage{lmodern}
\usepackage{latexsym}
\usepackage{cancel}
\usepackage{microtype}
\usepackage{caption}
\usepackage{MnSymbol}
\usepackage{xcolor}
\usepackage{enumerate}
\usepackage[normalem]{ulem}
\usepackage{bbm}
\usepackage{geometry}
\usepackage{marginnote}

\setlength{\oddsidemargin}{10pt}
\setlength{\evensidemargin}{10pt}
\setlength{\textwidth}{6.0in}
\setlength{\topmargin}{0in}
\setlength{\textheight}{8.5in}
\setlength{\parskip}{4px}

\usepackage[colorlinks=true,linkcolor=blue,citecolor=magenta]{hyperref}

\usepackage{graphics,tikz}

\newcommand{\BB}{{\mathcal  B}}

\newcommand{\EE}{{\mathcal  E}}
\newcommand{\FF}{{\mathcal  F}}

\newcommand{\HH}{{\mathcal  H}}
\newcommand{\TT}{{\mathcal  T}}

\newcommand{\MM}{{\mathcal  M}}

\newcommand{\PP}{{\mathcal  P}}

\newcommand{\BR}{{\mathbb R}}
\newcommand{\BX}{{\mathbb X}}

\def\Xint#1{\mathchoice
{\XXint\displaystyle\textstyle{#1}}%
{\XXint\textstyle\scriptstyle{#1}}%
{\XXint\scriptstyle\scriptscriptstyle{#1}}%
{\XXint\scriptscriptstyle\scriptscriptstyle{#1}}%
\!\int}
\def\XXint#1#2#3{{\setbox0=\hbox{$#1{#2#3}{\int}$ }
\vcenter{\hbox{$#2#3$ }}\kern-.6\wd0}}

\def\dashint{\Xint-}

\newtheorem{theorem}{\bf Theorem}[section]
\newtheorem{proposition}[theorem]{\bf Proposition}
\newtheorem{lemma}[theorem]{\bf Lemma}
\newtheorem{corollary}[theorem]{\bf Corollary}

\theoremstyle{definition}
\newtheorem{definition}[theorem]{Definition}
\newtheorem{example}[theorem]{\bf Example}

\newtheorem{remark}[theorem]{Remark}

\numberwithin{equation}{section}

\begin{document}

\title[Supersolutions to 
Schr\"odinger equations]{Location of   zeros of non-trivial positive supersolutions to 
Schr\"odinger equations}

\author{Tomasz Klimsiak}

\address{\small Institute of Mathematics, Polish Academy of Sciences,
ul. \'{S}niadeckich 8,   00-656 Warsaw, Poland, \and Faculty of
Mathematics and Computer Science, Nicolaus Copernicus University,
Chopina 12/18, 87-100 Toruń, Poland, \\e-mail: {\tt tomas@mat.umk.pl}}

\begin{abstract}
We study  Schr\"odinger operators on $L^2(E;m)$  of the form $-A+V$  with  singular
potentials $V$. We address the question posed by H. Brezis about the structure
of the set $\{u=0\}$ for non-negative supersolutions to $-Au+Vu=0$.
The class of operators $A$ we study in the paper  includes, in particular,
symmetric L\'evy type operators and symmetric diffusions in divergence form, 
with strictly positive Green functions. The class of potentials $V$
consists of positive  {\em smooth measures}, which contains, in particular, 
Coulomb potentials and harmonic potentials, as well as {\em generalized potentials},
i.e. positive Borel measures concentrated on $m$-negligible sets.

\end{abstract}
\maketitle

\footnotetext{{\em Mathematics Subject Classification:}
Primary  35B05, 35B50; Secondary 35B25, 35J08, 31C25, 60J45, 47G20.}

\footnotetext{{\em Keywords:} Maximum principle, Schr\"odinger
operator, smooth measure, fine topology, singular potential,
Dirichlet form, Markov process, Green function,  additive
functional. }

\section{Introduction}

Let $(E,\varrho)$ be a locally compact separable  metric space, $m$ be a
Radon measure on $E$ with full support and $A$ be a
self-adjoint operator on $L^2(E;m)$ that generates a Markov semigroup
of contractions  $(T_t)_{t\ge 0}$ on $L^2(E;m)$.  We assume
  that    the Green function $G$ for  the operator $A$ exists and  is strictly positive.

{\bf Formulation of the problem.} In the present paper, we answer the question posed by H. Brezis
(see \cite{Ancona,BB}): what is
the structure of the set  $\{u=0\}$, where $u$ satisfies
\begin{equation}
\label{eq1.1}
-Au+Vu\ge 0,\qquad u\ge 0,
\end{equation}
for a  locally integrable potential $V:E\to [0,\infty]$.
In fact in the paper we consider a wider class of potentials, allowing $V$ in  (\ref{eq1.1}) to be  a positive {\em smooth measure}  (see
Section \ref{sec2.1}), i.e. a Borel measure absolutely continuous with respect to the capacity $Cap_A$
naturally generated by the operator $A$, and such that $\int_E\eta\,dV<\infty$ for a strictly positive {\em quasi-continuous function}
$\eta:E\to\mathbb R$; so $V$ as a function need not be locally integrable and as a measure need not be  Radonian.
Thus, we encompass the class of so called {\em generalized Schr\"odinger operators}.

The classical strong maximum principle (Hopf \cite{Hopf}, 1927)  stands that if
$A=\Delta_{|D}$ (or $A$ is a diffusion with coefficients satisfying
some suitable assumptions) on a bounded domain $D\subset \BR^d$ and
 $V\in L^{p}(D;m)$ for some $p>d/2$, then  there are only two possibilities for   $u\in C^2_b(D)$
satisfying \eqref{eq1.1} pointwisely in $D$ :
either $\{u=0\}=\emptyset$ or $\{u=0\}=D$ (see e.g.  Littman \cite{Littman}, Stampacchia \cite{Stampacchia}, Gilbarg and Trudinger \cite{GT}).
In general, however,  for merely locally integrable potentials, we cannot expect  the
strong maximum principle to hold. For instance,
$\{u=0\}=\{0\}$ for the function $u(x)= |x|^2$, which satisfies
(\ref{eq1.1}) with $A=\Delta_{|B(0,1)}$ and $V(x)=2d|x|^{-2}$. We see that $V\in L^1(D;m)$ for $d\ge 3$ ($V\cdot m$ is a smooth measure for $d=2$).

The  problem of the structure of the set $\{u=0\}$ for $u$ satisfying \eqref{eq1.1},  
was studied  for  the Dirichlet Laplacian or uniformly elliptic diffusion operator by
Ancona \cite{Ancona}, B\'enilan and Brezis \cite{BB}, Brezis and
Ponce \cite{BP}, and recently by Orsina  and Ponce
\cite{OrsinaPonce}. The results obtained in
\cite{Ancona,BB,BP,OrsinaPonce} can be briefly summarized as follows.
In the paper by  Brezis and B\'enilan \cite[Appendix C]{BB} it is assumed that
$A=\Delta$ and  $V\in L^1_{loc}(\BR^d;m)$. It is
shown there  that if $u\in L^1_{loc}(\BR^d;m)$  satisfies (\ref{eq1.1}) a.e.,
then
 \[
\text{ boundedness of the set }\{u>0\}\mbox{ implies that } \{u=0\}=\BR^d\,\text{a.e.}
\]
Ancona \cite{Ancona} (see also Brezis and Ponce \cite{BP}) have considered a uniformly elliptic divergence form operator
$Au=\sum^{d}_{i,j=1}(a_{ij}u_{x_i})_{x_j}$ on  a bounded domain
$D\subset \BR^d$. In \cite{Ancona} it is proved that if a
quasi-continuous $u\in H^1(D)$ (or $u\in L^1(D;m)$ in case $a$ is
smooth) satisfies (\ref{eq1.1}) in the sense of measures, and is non-trivial ($m$-a.e.), then (for quasi-continuous version of $u$)
\begin{equation}
\label{eq.anc}
\qquad Cap_2(\{u=0\})=0.
\end{equation}
Here Cap$_2$ is the Newtonian capacity.
In the paper by Orsina and Ponce \cite{OrsinaPonce},  $A=\Delta_{|D}$ on  a bounded domain $D\subset \BR^d$ and  $V\in L^p(D;m)$ for some $p>1$. 
It is proved there that
if $u\in L^1(D;m)\cap L^1(D; V\cdot  m)$ satisfies (\ref{eq1.1}), and is non-trivial ($m$-a.e.), then
\begin{equation}
\label{eq1.2}
Cap_{W^{2,p}}\Big (x\in D:\limsup_{r\rightarrow 0^+}
\dashint_{B(x,r) }u(y)\,dy=0\Big)=0.
\end{equation}
Three conclusions can be drawn from the above results.  
They all concern    a "size" of the set $\{u=0\}$ - recall that if $Cap_A(B)=0$, then $\HH_G(B)<\infty$, where $\HH_G$
is the Hausdorff measure related to the Green function $G$ (see \cite{HN}), and if $Cap_{W^{2,p}}(B)=0$, then $\HH_{d-2p}(B)<\infty$,
where $\HH_{d-2p}$ is $(d-2p)$-dimensional Hausdorff measure.
For their formulation, if we are talking about a subtler measure than the Lebesgue measure, a  precise version of $u$ is needed, thus the existence of a regular  representative of $u$  must be a part of the assertion for the results describing the set $\{u=0\}$. Finally, the "size" of $\{u=0\}$ depends on the regularity of potential $V$.
 A companion problem  is the rigorous meaning of the  inequality \eqref{eq1.1}.  
 Besides the classical pointwise  formulation, applicable only to specific operators and regular $u$, 
 one may consider weak formulation for \eqref{eq1.1}:
 \begin{equation}
\label{eq5.1aint1}
-\int_E u A\eta\,dm+\int_E Vu\eta\,dm\ge 0,\quad \eta\in \mathcal C,
\end{equation}
 for a suitable   class $\mathcal C$ of test functions, and  one may understood \eqref{eq1.1}  in the sense of measures, with the assumption that
 $Au$ is a Borel measure. At this point it is worth mentioning that first results on strong maximum principle (for diffusion operators) with some non-classical  formulations of \eqref{eq1.1} are due to Calabi \cite{Calabi} and Littman \cite{Littman}.

{\bf Main results of the paper.}
In the present paper we study the structure of the set $\{u=0\}$ for a wide class of operators.
A complete novelty of  the present paper is the fact that we treat the question by H. Brezis for non-local operators
which  are now of great interest  both in pure and in applied mathematics (see e.g. \cite{BV} and references therein).
As we mentioned before, in the literature the attention has been focused on the problem of a "size" of the set $\{u=0\}$.
We go much further in this research, and this is the second novelty of the present paper, namely we indicate  a set $N_V$ - depending only on $A$ and $V$ - where all possible zeros of any non-trivial solution to \eqref{eq1.1} are located. As corollaries,  we get results on a "size"
of the set $\{u=0\}$. 
It appears that  the said set   $N_V$ admits the following form:
\[
N_V:= \{x\in E: \nexists\,\, \text{ finely-open } U_x, \text{with}\,\, x\in U_x,\,\,\text{such that  } \int_{U_x}G(x,y)V(y)\,m(dy)<\infty\}.
\]
This is an interesting object, which naturally appears in the context of Schr\"odinger equations with measure data (see \cite{K:NA1}),
and is well known in the probabilistic potential theory as it is the complement of the set of {\em permanent points} for $V$ (see e.g. \cite{BG}). 
It appears that independently
of the potential $V$ and operator $A$ we always have 
\begin{equation}
\label{eq.capp1}
Cap_A (N_V)=0.
\end{equation}
The  main result of the paper  (see Theorem \ref{th5.1}) stands as follows.

\vspace*{0.10cm}
\begin{center}
\begin{minipage}[c][2,85cm][t]{0,85\textwidth}
\textbf{Theorem 1.} 
Let  $u\in L^1(E;m)\cap L^1(E;V\cdot m)$ be a positive function satisfying 
\begin{equation}
\label{eq5.2int}
-\int_E u A\eta\,dm+\int_E Vu\eta\,dm\ge 0,\quad \eta\in \mathcal C,
\end{equation}
where $\mathcal C=\{\eta\in \mathfrak D(A):\eta\in \BB^+_b(E),\, A\eta\,\,\mbox{is bounded}\}$.
Then  there exists an $m$-version $\check u$
of $u$ which is {\em finely-continuous} on $E\setminus N_V$. Moreover, if $\check u(x)=0$ 
for some $x\in E\setminus N_V$, then $\check u\equiv0$.
\end{minipage}
\end{center}
Note  that the finely-continuous $m$-version $\check u$ of $u$ is given by the following formula:
\begin{equation}
\label{fcw101}
\check u(x)= \lim_{t\searrow 0}\int_Eu(y)p_t(x,y)\,m(dy)=\lim_{\alpha\to\infty}\alpha\int_Eu(y)G_\alpha(x,y)\,m(dy),\quad x\in E\setminus N_V,
\end{equation}
where $p_t(\cdot,\cdot)$ is the transition kernel of the semigroup $(T_t)$, and $G_\alpha(\cdot,\cdot)$ is its $\alpha$-Green's function.
The above result combined with \eqref{eq.capp1} yields a generalization of the result by Ancona  \eqref{eq.anc}.

From the above theorem, we conclude the second main result of the paper, which  stands that $N_V$ is exactly the set of 
  zeros of  all non-trivial positive functions satisfying \eqref{eq5.2int}.

\vspace*{0.10cm}
\begin{center}
\begin{minipage}[c][1,85cm][t]{0,85\textwidth}
\textbf{Theorem 2.} 
Let $\mathcal A :=\{u\in L^1(E;m)\cap L^1(E;V\cdot m):u\ge 0$, $u$ satisfies \mbox{\rm(\ref{eq5.2int})}, $u$ is finely-continuous and $u\nequiv 0$\}.
Then
\[
N_V= \bigcup_{u\in \mathcal A} \{u=0\}.
\]
 \end{minipage}
\end{center}

Although the  above theorems are the main results of the paper,  we put the major work  into proving
the following, interesting in its own, result  which  provides Feynman-Kac type representation for  solutions to \eqref{eq5.2int}.

\vspace*{0.10cm}
\begin{center}
\begin{minipage}[c][3,75cm][t]{0,85\textwidth}
\textbf{Theorem 3.} 
Assume that $u\in L^1(E;m)\cap L^1(E;V\cdot m)$ is a
positive function  that satisfies \eqref{eq5.2int}.
Then there exists an $m$-version $\check u$
of $u$ which is   finely-continuous on $E\setminus N_V$. Moreover,  for any $k\ge 0$ there exists a positive smooth measure $\beta_k$ such that
\begin{align*}
\check u_k(x)&=\int_\Omega \Big[e^{-\int_0^{t\wedge \tau_D(\omega)}V(\omega(r))\,dr}\check u_k(\omega(t\wedge \tau_D(\omega))\Big]\,dP_x(\omega)
\\&\qquad+\int_\Omega\Big[\int_0^{t\wedge \tau_D(\omega)}e^{-\int_0^rV(\omega(s))\,ds}\,dA^{\beta_k}_r(\omega)\Big]\,dP_x(\omega),
\end{align*}
for any open relatively compact set $D\subset E$, $x\in D$, and $t\ge 0$, where $u_k=u\wedge k$.
 \end{minipage}
\end{center}
Here $(P_x)_{x\in E}$ is a  family of Borel measures on the Skrochod  path space 
$\Omega\subset E^{[0,\infty)}$ consisting of c\`adl\`ag functions, so called Markov family associated with the semigroup
$(T_t)$, $A^{\beta_k}$ is a  {\em positive continuous additive functional} of $(P_x)$ associated with $\beta_k$ - in case $\beta_k$
 is a function $A^{\beta_k}_t(\omega)=\int_0^t\beta_k(\omega(r))\,dr$ -   and 
\[
\tau_D:\Omega\to [0,\infty],\quad \tau_D(\omega)=\inf\{t>0: \omega(t)\notin D\}
\]
(see introductory  Section \ref{sec2.2}).  

In Section \ref{sec6},
we prove our last main result of the paper concerning the strong maximum principle for operators $-A+V$ - we say that SMP holds 
for $-A+V$ if  for any {\em finely-continuous}  positive function $u\in L^1_{loc}(E;m)\cap L^1(E;V\cdot m)$ 
that satisfies \eqref{eq5.2int} we have the following  implication: if $u(x_0)=0$ for some $x_0\in E$, then $u(x)=0,\, x\in E$.
Section \ref{sec6} is the only one where we dispense with the assumption that $V\cdot m$
is a smooth measure. In this section the only requirement from $V:E\to [0,\infty]$ is being  Borel measurable.
By Theorem 1 if $V\cdot m$ is a smooth measure and $N_{V}=\emptyset$, then SMP holds.
However, $N_{V}=\emptyset$ already implies that $V\cdot m$ is  smooth. Thus implication $N_V=\emptyset\Rightarrow$ SMP is true
for arbitrary Borel measurable $V:E\to [0,\infty]$. We prove that if $N_V\neq\emptyset$, then there is only one possibility when SMP
still holds, namely if there is no non-trivial positive solution to \eqref{eq5.2int}.  The class of such potentials is reach. For example, in \cite{OP1}
it has been proven that for $A=\Delta$ and $V(x):=|x_1|^{-\gamma}$, with $\gamma\in [1,2)$ there is no non-trivial positive solution to \eqref{eq5.2int}.

\vspace*{0.10cm}
\begin{center}
\begin{minipage}[c][2,35cm][t]{0,85\textwidth}
\textbf{Theorem 4.} 
Let $V:E\to [0,\infty]$ be a Borel measurable function.
Then the SMP  holds for $-A+V$
if and only if 
\begin{enumerate}
\item[(1)] either $N_{V}=\emptyset$
\item[(2)] or $N_{V}\neq\emptyset$ and there is no non-trivial positive finely-continuous solution to \eqref{eq5.2int}.
\end{enumerate}
 \end{minipage}
\end{center}
The conclusion of the above theorem, in case $A=\Delta$ and $d=1$, follows from the recent paper by Bertsch, Smarrazzo and Tesei \cite{BST},
where the authors  went even a step further and gave a necessary and sufficient condition on $V$ guaranteeing that under $N_V\neq\emptyset$
there is no non-trivial positive solution to \eqref{eq5.2int} (see Remark \ref{rem.bst}). 

In Section \ref{sec7}, we generalize the result by Orsina and Ponce 
\eqref{eq1.2} (see also Section \ref{sec8}) by proving that for $V\in L^p(E;m)$, with $p>1$, and non-trivial positive $u\in L^1(E;m)\cap L^1(E;V\cdot m)$ that satisfies \eqref{eq5.2int} we have
\[
C_p(\{\check u =0\})=0,
\]
where $\check u$ is the finely-continuous $m$-version of $u$ on $N_V$, and $C_p$ is Riesz's capacity. 
Recall that for $A=\Delta$ we have $C_p\sim Cap_{W^{2,p}}$.

In Section \ref{sec8} (see Theorem \ref{th7.1})
we provide a result, especially important from the practical point of view, saying that in case $A$ is a L\'evy operator, i.e.
\begin{equation}
\label{eq.psi}
-A=\psi(-\Delta)
\end{equation}
for a Bernstein function $\psi$, then  Theorems 1--4 holds true  with $\mathcal C$ replaced by $C_c^\infty(\BR^d)$
in \eqref{eq5.2int}.  We also make some comments and provide some results concerning 
finely-continuous versions of Borel functions, especially for
the Laplacian and the fractional Laplacian.

{\bf Smooth measures.} In order to make the exposition of the main results of the present paper  
more clear,  we formulated them in the Introduction  for  potentials $V:E\to [0,\infty]$ such that measure $V\cdot m$ is {\em smooth}; equivalently
for potentials $V$ being locally {\em quasi-integrable}: for any compact $K\subset E$ and $\varepsilon>0$ there exists a closed set $F_\varepsilon\subset K$
such that $Cap_A(K\setminus F_\varepsilon)\le\varepsilon$ and 
\[
\int_{F_\varepsilon}V(y)\,m(dy)<\infty
\]
(we denote the class of such functions by $L^1_{q.loc}(E;m)$). Clearly, $L^1_{loc}(E;m)\subset L^1_{q.loc}(E;m)$. 
There are many interesting and very important in applications subclasses of $L^1_{q.loc}(E;m)$ which go beyond
the space $L^1_{loc}(E;m)$, such as, for example, the class of repulsive potentials 
\[
V(x)=\sum_{i=1}^l \frac{1}{\delta^{p_i}_{K_i}(x)},\quad x\in E.
\]
Here $K_i$ are compact sets satisfying $Cap_A(K_i)=0,\, i=1,\dots,l$ ($\delta_{K_i}(x):= \mbox{dist}(x,K_i)$),
and $p_i>0,\, i=1,\dots,l$. We see that $V$ explodes when approaches $K_i$.  
For the Dirichlet fractional  Laplacian $(\Delta^\alpha)_{|D}$, with a  bounded open  $D$ in $\BR^d$ ($d\ge 2$, $\alpha\in (0,1)$), 
we may take for $K_i$ any set of Hausdorff  dimension less than $d-2\alpha$ (see \cite[Theorem 5.1.9]{AH}). 

However, most of the results of the paper (Theorems 1--3) holds true with $V\cdot m$ replaced by 
a positive {\em smooth measure} $\nu$. The class  of {\em smooth measures}, denoted by $S_A$,
depends on the operator $A$, but, as we mentioned before, we  always have the inclusion $L^{1}_{loc}(E;m)\subset S_A$.
An interesting subclass of $S_A$,  of the great importance in applications,  consists of {\em generalized potentials}, 
i.e. positive measures which charge $m$-negligible   sets (see e.g. \cite{ABR}).
For example, for $A=(\Delta^\alpha)_{|D}$ any $\sigma$-finite positive Borel measure $\nu$ satisfying 
\[
\nu(dx)\ll \mathcal H_\lambda(dx),
\]
for some $\lambda\in (d-2\alpha,d)$, is a generalized potential (see \cite[Theorem 5.1.13]{AH}). 

It is known that $N_\nu=\emptyset$ if and only if  $\nu$ is {\em strictly smooth}, and this is so e.g.    when  $\nu$ is Green finite  or
$\nu$ is from Kato's class, see Section \ref{sec2.3}.
As a particular application of the main results of the paper, we have
that the classical strong maximum principle holds for the operator
$-\Delta_{|D}+\nu$ (with connected $D$) and $\nu(dy)=\gamma_M(dy)$ (where
$M$ is a compact $d-1$ dimensional manifold and $\gamma_M(dy)$ is the
surface measure on $M$), since in
this case $\nu$ is of Kato's class (see \cite{AM1}).

{\bf Dirichlet operators.}
 The class of operators considered  in the present paper includes the operators studied in \cite{Ancona, BB,BP,OrsinaPonce} 
 as well as other  fundamental  operators, for instance Laplacian with mixed boundary condition on a connected 
 open set (see \cite{Juan}), fractional Laplacian, Dirichlet fractional Laplacian, regional fractional Laplacian,  
 fractional Laplacian with mixed boundary condition (see \cite{BarriosMedina}) on arbitrary open set, L\'evy type operators of the
 form $\psi(-\Delta)$, where $\psi$ is a Bernstein function  (see \cite[Example 1.4.1]{FOT}, \cite{BL}), uniformly elliptic operators on 
 manifolds (see \cite[Example 5.7.2.]{FOT}), degenerate diffusion operators (see \cite[Exercise 3.1.1.]{FOT}), Laplacian on Sierpi\'nski's gasket (see \cite{FS}).
 The general structure of the operator $A$ is known in case $E\subset \BR^d$. 
 Due to Beurling-Deny decomposition  and the transformation rule for additive functionals (see \cite[Theorem 5.6.2]{FOT})
 \[
 (-Au,v)_{L^2(E;m)}=\int_E u_{x_i}v_{x_j}\,d\nu_{i,j}+\int_{E\times E\setminus \mathfrak d}(u(x)-u(y))(v(x)-v(y))J(dx,dy)
 \]
for any $u\in \mathfrak D(A),\, v\in\mathfrak D(\EE)$,
where $(\nu_{i,j})_{i,j=1,\dots d}$ is a positive definite matrix of smooth measures and  $J$ is a positive symmetric Borel measure
on $E\times E\setminus \mathfrak d$ ($\mathfrak d$ denotes the diagonal in $E\times E$).

{\bf Final comments.} 
After finishing this manuscript, we have learned about the results of \cite{OP}.
In this paper  Orsina and Ponce  studied the set of zeros of solutions to the equation 
\begin{equation}
\label{eq.op}
-\Delta u+Vu=f \text{  in  }  D,\quad u=0\text{  on  }\partial D,
\end{equation}
with $f\in L^\infty(D;m)$, where $V:E\to[0,\infty]$ is Borel measurable.  
The authors introduced in \cite{OP} a set $Z\subset D$,
called {\em universal zero-set},
\begin{equation}
\label{eq.zet}
Z:=\bigcap_{f\in L^\infty(D;m),f\ge 0}\{x\in D: \hat\omega_f(x):=\limsup_{r\rightarrow 0^+}
\dashint_{B(x,r) }w_f(y)\,m(dy)=0 \},
\end{equation}
where $\omega_f\in W^{1,2}_0(D)\cap L^\infty(D;m)\cap L^1(D;V\cdot m)$ is a unique solution to \eqref{eq.op}, and they proved 
that if $\hat w_f(x)=0$ for some $x\in D\setminus Z$, then $\hat w_f\equiv 0$ on a finely-connected component of $D\setminus Z$
containing $x$. From our results it follows easily that $Z=N_V$ provided $V$ is locally quasi-integrable. 
Thus, as a corollary, we obtain a simple characterization   of the set $Z$ by means of the Green function of $A$.
Observe that the result by Orsina and Ponce agrees with our results
since $D\setminus N_V$ is finely-connected due to \eqref{eq.capp1} (see Remark \ref{rem.fcom} and \cite[Corollary 1.2]{OP}).

\section{Notation and standing assumptions}
\label{sec2}

We denote  by $\BB(E)$ (resp. $\BB^+(E)$)  the set of all Borel (resp.
positive Borel) measurable functions on $E$. We say that a measure $\mu$ on $E$ is notrivial if $\mu(B)\neq0$ 
for some Borel set $B\subset E$. For $x\in E$ and $r>0$, $B(x,r):=\{y\in E:\varrho(x,y)<r\}$.

\subsection{Dirichlet forms and potential theory}
\label{sec2.1}

In the  paper, we assume that $(A,\mathfrak D(A))$ is a negative definite
self-adjoint
operator on $L^2(E;m)$  generating a strongly continuous Markov
semigroup of contractions $(T_t)_{t\ge 0}$ on $L^2(E;m)$. It is well known (see
\cite[Section 1]{FOT}) that there exists a unique symmetric
Dirichlet form $(\EE, \mathfrak D(\EE))$ on $L^2(E;m)$ such that
\[
\mathfrak  D(A)\subset \mathfrak D(\EE)\qquad \EE(u,v)=(-Au,v),\quad u\in \mathfrak D(A),\, v\in \mathfrak D(\EE).
\]
We assume that $(\EE,\mathfrak D(\EE))$ is transient and regular, i.e.
there exists a strictly positive bounded function $g$ on $E$ such
that
\[
\int_E|u|g\,dm\le \sqrt{\EE(u,u)},\quad u\in \mathfrak  D(\EE),
\]
and $\mathfrak D(\EE)\cap C_c(E)$ is dense in $C_c(E)$ in the uniform convergence topology,  and in $\mathfrak D(\EE)$  equipped with
the norm   generated  by the inner product  $\EE(\cdot,\cdot)+(\cdot,\cdot)_{L^2}$.

\begin{remark}
There is no loss of generality in assuming that $\EE$ is transient. Indeed, if $-A$ generates a Dirichlet form  $\EE$ which  is not transient, then for any $\alpha>0$ the operator $-A+\alpha$ generates the form $(\EE_\alpha,\mathfrak D(\EE))$, with $\EE_\alpha(\cdot,\cdot):=\EE(\cdot,\cdot)+\alpha(\cdot,\cdot)_{L^2(E;m)}$, which is transient. It is clear that if a positive $u$ satisfies $-Au+u\cdot\nu\ge 0$, then $-Au+\alpha u+u\cdot\nu\ge 0$. Moreover, if $\nu$ is smooth with respect to $\EE$, then it is  smooth with respect to $\EE_\alpha$. Therefore, we may apply the results of the paper to the operator $-A+\alpha$ perturbed by $\nu$.
\end{remark}

For an open set  $U\subset E$, we put
\[
Cap_A(U)=\inf\{\EE(u,u): u\ge\mathbf{1}_U\mbox{ $m$-a.e.},\,
u\in \mathfrak D(\EE)\},
\]
and then, for arbitrary $B\subset E$, we set $Cap_A(B)=\inf
Cap_A(U)$, where the infimum is taken over all open subsets $U$
of $E$ such that $B\subset U$. We say that a property holds
q.e. if it holds except for a set of capacity $Cap_A$ zero.

By $\MM(E)$ (resp. $\MM^+(E)$, $\MM_b(E)$) we denote the set of Borel  (resp. positive Borel, bounded Borel) measures
on $E$.
In the paper we adopt the following notation: for a $\mu\in\MM^+(E)$, and
$f\in \BB^+(E)$ we set
\[
\langle\mu, f\rangle=\int_E f\,d\mu.
\]
For $f$ and $\mu$ as above, we denote by  $f\cdot \mu$  the  Borel measure on $E$ such that
\[
\langle f\cdot\mu,\eta\rangle=\langle \mu,f\eta\rangle,\quad \eta\in \BB^+(E).
\]

We say that a function $u$ on $E$ is quasi-continuous if for every
$\varepsilon>0$ there exists a closed set $F_\varepsilon\subset E$
such that $Cap_A(E\setminus F_\varepsilon)\le\varepsilon$ and
$u_{|F_\varepsilon}$ is continuous. By \cite[Theorem 2.1.3]{FOT},
each function $u\in \mathfrak D(\EE)$ admits a quasi-continuous
$m$-version. In the sequel, for $u\in \mathfrak D(\EE)$, we denote by
$\tilde u$ its quasi-continuous $m$-version.

We say that  $\mu\in\MM(E)$  is {\em smooth}  if
\begin{enumerate}[(a)]
\item $\mu\ll Cap_A$,

\item $\langle|\mu|,\eta\rangle<\infty$ for some strictly positive quasi-continuous function $\eta$ on $E$.

\end{enumerate}
It is well known that every   $\mu\in\MM(E)$ admits the following unique decomposition
\[
\mu=\mu_d+\mu_c,
\]
where $\mu_d\ll Cap_A$ and $\mu_c\bot Cap_A$. In the
literature, $\mu_d$ is called the diffuse part of $\mu$, and
$\mu_c$ is called the concentrated part of $\mu$.

 We set
\[
\mathfrak D(\EE^\nu)= \mathfrak D(\EE)\cap L^2(E;\nu),\qquad
\EE^\nu(u,v)=\EE(u,v)+\langle \tilde u\cdot\nu,\tilde
v\rangle,\quad u,v\in \mathfrak D(\EE^\nu).
\]
By \cite[Theorem 4.6]{MR}, $(\EE^\nu,\mathfrak D(\EE^\nu))$ is a
quasi-regular symmetric Dirichlet form on $L^2(E;\nu)$. By
\cite[Corollary 2.10]{MR}, there exists a unique negative definite self-adjoint
operator $(A^\nu, \mathfrak D(A^\nu))$  such that $\mathfrak D(A^\nu)\subset
\mathfrak D(\EE^\nu)$ and
\[
\EE^\nu(u,v)=(-A^\nu u,v),\quad u\in \mathfrak D(A^\nu), v\in \mathfrak D(\EE^{\nu}).
\]
We put $-A+\nu:=-A^\nu$ and we denote
by $(T^\nu_t)_{t\ge 0}$  the strongly continuous
Markov semigroup of contractions on $L^2(E;m)$ generated by $-A^\nu$.

For an open set $D\subset E$, we denote by $(\EE^D,\mathfrak D(\EE^D))$ the part of $(\EE,\mathfrak D(\EE))$ on $D$, that is  a  symmetric form defined as
\[
\mathfrak D(\EE^D)=\{u\in \mathfrak D(\EE): \tilde u= 0\mbox{ q.e. on } E\setminus D\},\quad
\EE^D(u,v)= \EE(u,v),\quad u,v\in \mathfrak D(\EE^D).
\]
By \cite[Theorem 4.4.3]{FOT}, $(\EE^D, \mathfrak D(\EE^D))$ is again a regular symmetric transient Dirichlet form on $L^2(D;m)$. The operator generated by $(\EE^D,\mathfrak D(\EE^D))$ shall be
denoted by $A_{|D}$\,.

We denote  by $\Delta^{\alpha}$, $\alpha\in(0,1)$, the operator associated with the form
\[
\EE(u,v)=\int_{\BR^d}\hat u(x)\hat v(x) |x|^{2\alpha}\,dx,\quad \mathfrak D(\EE)=\{u\in L^2(\BR^d; dx): \int_{\BR^d}|\hat u|^2(x)|x|^{2\alpha}\,dx<\infty \},
\]
where $\hat u$ stands for the Fourier transform of $u$, 
and by $\Delta$ the usual Laplace operator, which can be viewed as the operator associated with the above form with $\alpha=1$.
It is well known that if $\alpha\in(0,1)$, then for $u\in C^\infty_c(\BR^d)$,
\[
\Delta^\alpha u(x)=c_{d,\alpha}\,P.V.\int_{\BR^d}\frac{u(y)-u(x)}{|x-y|^{d+2\alpha}}\,dy=\frac{c_{d,\alpha}}{2}\int_{\BR^d}\frac{u(x+y)+u(x-y)-2u(x)}{|y|^{d+2\alpha}}\,dy
\]
for some constant $c_{d,\alpha}>0$ (see, e.g., \cite{Kwasnicki}). The operator corresponding to the part of the above form on $D$ shall be denoted by $(\Delta^{\alpha})_{|D}$ if $\alpha\in(0,1)$, and by $\Delta_{|D}$ if $\alpha=1$.

\subsection{Probabilistic potential theory}
\label{sec2.2}
Let $\mathcal D$ be the set of functions $\omega:[0,\infty)\to E\cup \{\partial\}$ (where $\partial$ is a one-point compactification of $E$ in case $E$
is not compact, and an isolated point in case  $E$ is compact) such that
\begin{enumerate}[(1)]
\item $\omega$ is c\`adl\`ag, i.e. it is right continuous on $[0,\infty)$ and has left limits on $(0,\infty)$,
\item if $\omega(t)=\partial$ for some $t\ge 0$, then $\omega(s)=\partial,\, s\ge t$. 
\end{enumerate}
Let $d_S$ be the Skorochod metric on $\mathcal D$ (see  e.g. Section 12 of \cite{Bil}). With this metric $\mathcal D$ is a separable complete metric space.
Let $X:= Id_{\mathcal D}$ and
\[
X_t(\omega):= (X(\omega))(t),\quad t\ge 0,\, \omega\in \mathcal D.
\]
We see that $X_t$ is  a projection onto "$t$-coordinate".
From now on any function $u:E\to \BR\cup\{\pm\infty\}$  extends to $E\cup\{\partial\}$ by letting $u(\partial)=0$.
By \cite[Theorem 4.2.4]{FOT}, there exists a family $(P_x,\, x\in E\cup\{\partial\})$ of (Borel) probability measures on $\mathcal D$ such that
for any $f\in\BB_b(E)\cap L^2(E;m)$ and any $t>0$,
\begin{enumerate}[(1)]
\item $T_tf(x)=\int_{\mathcal D}f(X_t(\omega))\,P_x(d\omega)\,m$-a.e.,
\item $x\mapsto \int_{\mathcal D}f(X_t(\omega))\,P_x(d\omega)$ is quasi-continuous.
\end{enumerate}
As is customary, we denote 
\[
\mathbb E_x Y:= \int_{\mathcal D}Y(\omega)\,P_x(dw)
\]
for any  $Y\in\BB(\mathcal D)$. The operator $\mathbb E_x$ is called the {\em expectation} (with respect to $P_x$). Let $\mathcal P$ denote the set of all probability measures on $\BB(\BR)$.
For a given $\mu\in\PP$, we let 
\[
P_\mu(A):= \int_EP_x(A)\,\mu(dx),\quad A\in\BB(\mathcal D).
\]
Denote $\FF^0_t:=\sigma(X_s: s\le t)$, $\FF^0_\infty:= \sigma(X_s: s\ge 0)$.
Next, let $\FF^\mu_\infty$ be a completion of $\FF^0_\infty$ with respect to $P_\mu$,
and $\FF^\mu_t$ be a completion of $\FF^0_t$ in $\FF^\mu_\infty$ with respect to $P_\mu$.
We set
\[
\FF_t:=\bigcap_{\mu\in\PP}\FF^\mu_t,\quad \FF_\infty:=\bigcap_{\mu\in\PP}\FF_\infty^\mu.
\]
By \cite[Theorem 7.2.1]{FOT}
\[
\mathbb X=((X_t)_{t\ge0},\, (P_x)_{x\in E\cup \{\partial\}},\,
\mathbb{F}=(\FF_t)_{t\ge0},\, \zeta,\, (\theta_t)_{t\ge0})
\]
is  a Hunt process, i.e. admits some extra properties described in \cite[Section A.2]{FOT},
where 
\[
\zeta(\omega)=\inf\{t>0: X_t(\omega)=\partial\},\quad \theta_t:\mathcal D\to \mathcal D,\quad \theta_t(\omega)(s):=\omega(t+s).
\]
For $f\in\BB^+(E)$, we put
\[
P_t f(x)=\mathbb E_xf(X_t),\,\,t\ge0, \qquad R_{\alpha}f(x)=\mathbb E_x\int_0^\zeta
e^{-\alpha t}f(X_t)\,dt,\quad \alpha\ge0,\quad x\in E,
\]
and $R:=R_0$. We say that a property holds q.a.s. if it holds $P_x$-a.s. for q.e. $x\in E$, and we say that
it holds a.s. if it holds $P_x$-a.s. for every $x\in E$.

Any family $(Y_t)_{t\ge 0}$ of mappings $Y_t:\mathcal D\to \mathbb R $ that are $\mathcal F_\infty/\mathcal B(E)$
measurable for any $t\ge 0$ is called a {\em stochastic process}. We say that a stochastic process $(Y_t)$ is {\em $\mathbb F$-adapted}
if for each $t\ge 0$, $Y_t$ is $\mathcal F_t/\mathcal B(E)$ measurable. We say that a stochastic process $(Y_t)$ is c\`adl\`ag if
$P_x(Y_\cdot\in \mathcal D)=1,\, x\in E$.

Throughout the paper, we assume that $\mathbb X$ satisfies {\em absolute continuity condition}, which means that
$R_1f(x)=0,\, x\in E$ whenever $f\in L^2(E;m)\cap \BB^+(E)$ and $\int_Ef\,dm=0$.
Recall, see \cite[Theorem 4.2.4]{FOT}, that due to symmetry of $(\EE,\mathfrak D(\EE))$ absolute continuity of $\mathbb X$ 
implies that 
\begin{equation}
\label{eq.acs}
P_tf(x)=0,\,\, x\in E,\,\, t>0 \text{ whenever } f\in\BB^+(E) \text{ and }\int_Ef\,dm=0.
\end{equation}
We say that  $f\in\BB^+(E)$ is an $\alpha$-excessive function   if
\[
\sup_{t>0}e^{-\alpha t}P_t f(x)=f(x),\quad x\in E.
\]
It follows from the above definition and   the absolute continuity  of
$\mathbb X$, that   
\begin{equation}
\label{eq.acs1}
f,g \text{ are }\alpha\text{-excessive  and }f\le g\,\,  m\text{-a.e   imply  }   f(x)\le g(x),\, x\in E.
\end{equation}
We will use frequently this property
without special mentioning. In the sequel $0$-excessive functions will be called   simply excessive.

By \cite[Lemma 4.2.4]{FOT}, for any $\alpha\ge 0$, there exists a unique Borel function 
$G_\alpha: E\times E\to \BR^+\cup {\{+\infty\}}$ (called the $\alpha$-Green function) such that for all  $f\in\BB^+(E)$
\begin{equation}
\label{Green}
R_\alpha f(x)=\int_E G_\alpha(x,y)f(y)\,dy,\quad x\in E,
\end{equation}
and $G_\alpha(x,\cdot), G_\alpha(\cdot,y)$ are $\alpha$-excessive for any $x,y\in E$.
We let $G:=G_0$. For a given  $\mu\in\MM^+(E)$, we set
\[
R_\alpha \mu(x)=\int_E G_\alpha(x,y)\,\mu(dy),\quad x\in E.
\]
We also let  $R= R_0$. Observe that from the very definition of the Green function one readily concludes that 
\begin{equation}
\label{eq.acs0}
R_\alpha\mu \text{ is } \alpha\text{-excessive for any   }\mu\in\MM^+(E).
\end{equation}
By \cite[Corollary 1.3.6]{Oshima}, due to the assumption that $(\EE,\mathfrak D(\EE))$ is transient, there exists a strictly positive function $\eta \in \BB(E)$
such that
\begin{equation}
\label{eq.acs5}
R\eta(x)\le 1,\quad x\in E.
\end{equation}
From this, we  conclude that 
\begin{equation}
\label{eq.acs6}
\mu\in\MM^+_b(E) \text{  implies  }  R\mu<\infty\text{  q.e.}
\end{equation}
Indeed, it is enough to apply  \cite[Proposition II.3.5]{GetoorGlover} and the fact that $\langle R\mu,\eta\rangle=\langle\mu,R\eta\rangle\le \mu(E)<\infty$
for any strictly positive  function $\eta\in\BB(E)$ satisfying \eqref{eq.acs5}.

We say that $A\subset E$ is {\em nearly Borel} if there exist $B_1,
B_2\in\BB(E)$ such that $B_1\subset A\subset B_2$ and $Cap_A (B_2\setminus B_1)=0$.
The class of all
nearly Borel subsets of $E$ shall be denoted by $\BB^n(E)$. It is
clear that $\BB(E)\subset \BB^n(E)$. For  $B\in \BB^n(E)$, we set
\[
\sigma_B=\inf\{t>0: X_t\in B\},\qquad \tau_B=\sigma_{E\setminus
B}.
\]
By \cite[(A.2.7)]{FOT} for any $B\in\BB^n(E)$,
\begin{equation}
\label{eq.elch384}
\lim_{t\to 0^+} \tau_B\circ\theta_t+t=\tau_B.
\end{equation}
We say that a set $N\subset E$ is   {\em polar}  if there exists $B\in
\BB^n(E)$ such that $N\subset B$ and
\[
P_x(\sigma_B<\infty)=0,\quad x\in E.
\]
By \cite[Theorems 4.1.2,  4.2.1]{FOT}, $Cap_A(N)=0$ if
and only if $N$ is polar. 

Finally, observe that for any relatively compact open $D\subset E$
\begin{equation}
\label{eq.dpp19}
P_x(\tau_D<\infty)=1,\quad x\in E.
\end{equation}
Indeed, since $(\EE,\mathfrak D(\EE))$ is assumed to be regular,  $Cap_A(D)<\infty$. Thus, by \cite[Lemma 2.1.1,Lemma 2.2.6]{FOT}, there exists an excessive function $e_D$
that satisfies:
\[
e_D(x)=1,\, x\in D,\qquad e_D=R\mu \text{ for a positive smooth measure } \mu \text{ on } E.
\]
Therefore, applying \cite[proof (e), page 403]{Getoor} but with $Uh$, appearing there, replaced by $R\mu$ yields \eqref{eq.dpp19}.

Let $\TT$ be the topology generated by $\varrho$ (the metric on $E$). We denote by
$\TT_f$ the {\em fine topology} on $E$, that is  the smallest topology on $E$ for which all
excessive functions are continuous. By \cite[Section II.4]{BG}, $\TT\subset
\TT_f$ and $A$ is a finely-open set if and only if for every $x\in
A$ there exists $D\in\BB^n(E)$  such that $D\subset A$ and
\[
P_x(\tau_D>0)=1.
\]
In other words, starting from  $x\in A$,  the process $\mathbb X$ spends some
nonzero  time  in $A$ until it exits $A$. Observe that
each polar set is finely-closed. Therefore,  every non-empty finely-open set $V$
satisfies $Cap_A(V)>0$. By \cite[Theorem II.4.8]{BG},
\begin{equation}
\label{eq.acs2}
f\in \BB^n(E) \text{ is finely-continuous }\Leftrightarrow\,\, f(X) \text{   is right-continuous a.s. }
\end{equation}
Therefore, using absolute continuity condition,  one  concludes that 
\begin{equation}
\label{eq.acs3}
f,g \text{ are finely-continuous,  } f\le g\,\,  m\text{-a.e   imply  }   f(x)\le g(x),\, x\in E.
\end{equation}
By \cite[Theorem 4.6.1]{FOT}, 
\begin{equation}
\label{eq.acs4}
f \text{ is finely-continuous and finite q.e.  } \Rightarrow \,\, f\text{  is quasi-continuous}.
\end{equation}
If $w$ is an excessive function, then by \cite[Theorem III.5.7]{BG}, $w(X)$ is a c\`adl\`ag   $\mathbb F$-supermartingale under measure $P_x$ for every $x\in E$. In particular, $w$ is finely-continuous and hence quasi-continuous.

By \cite[Theorem A.2.10]{FOT}, a Hunt process
\[
\mathbb X^D=((X_t)_{t\ge0},\, (P^D_x)_{x\in D\cup \{\partial\}},\,
\mathbb{F}^D=(\FF^D_t)_{t\ge0},\,\zeta,\, (\theta_t)_{t\ge0})
\]
associated with the form $(\EE^D,\mathfrak D(\EE^D))$ satisfies
\begin{equation}
\label{eq.rrr1}
P^D_x(\zeta=\tau_D), \quad x\in D.
\end{equation}
Moreover,
\begin{equation}
\label{eq.rrr2}
P^D_t f(x):=\mathbb E^D_xf(X_t)=\mathbb E_x[f(X_t)\mathbf{1}_{\{t<\tau_D\}}],\quad x\in D.
\end{equation}

\begin{remark}
The notions of  excessive functions, harmonic functions, nearly Borel sets,
polar sets, fine
topology,  quasi-continuous functions, smooth measures,
$Cap_A$ introduced above  depend on the process $\mathbb X$ or the
associated Dirichlet form $(\EE, \mathfrak D(\EE))$. We omit this dependence
in our notation since in most of the present paper we use them
for a fixed process $\mathbb X$ and form $(\EE,\mathfrak D(\EE))$
associated with the operator $A$ in (\ref{eq1.1}). In the case where the
process or the form under consideration will change, we will write this explicitly.
\end{remark}

\subsection{Additive functionals and perturbation of
self-adjoint operators by  smooth measures}

\label{sec2.3}

\begin{definition}
\label{def.3.3.4af}
We say that an $\mathbb F$-adapted process $A=(A_t)_{t\ge0}$ is an  additive functional (AF)
of $\mathbb X$ if there exists a polar set $N$ and $\Lambda\in\FF_\infty$ such that
\begin{enumerate}[(a)]
\item$P_x(\Lambda)=1,\,\,\, x\in E\setminus N$,
\item $P_x(A_0=0)$, $x\in E\setminus N$, 
\item $\theta_t(\Lambda)\subset \Lambda,\, t>0$, and for every $\omega\in \Lambda$,
$A_{t+s}(\omega)=A_t(\omega)+A_s(\theta_s\omega)$, $s,t\ge 0$,
\item $A_t(\omega)<\infty$, $t<\zeta(\omega)$, $\omega\in\Lambda$, and
$A_t(\omega)=A_{\zeta(\omega)},\,\,\,  t\ge \zeta(\omega),\, \omega\in\Lambda$,
\item $[0,\infty)\ni t\mapsto A_t (\omega)$ is c\'adl\'ag for every $\omega\in\Lambda$.
\end{enumerate}
\end{definition}

The set $N$ in the above definition is called an {\em exceptional set} for $A$,
and $\Lambda$ is called a {\em defining set} for $A$.

\begin{definition}
\label{def.3.3.4paf}
We say that an $\mathbb F$-adapted process $A=(A_t)_{t\ge0}$ is a positive  additive functional (PAF)
of $\mathbb X$ if it is an additive functional with defining set $\Lambda\in \FF_\infty$ and exceptional set $N$, and 
\begin{enumerate}[(a)]
\item  $A_t(\omega)\ge 0$, $t\ge 0$, $\omega\in \Lambda$.
\end{enumerate}
\end{definition}

\begin{definition}
\label{def.3.3.4pcaf}
We say that an $\mathbb F$-adapted process $A=(A_t)_{t\ge0}$ is a positive continuous additive functional (PCAF)
of $\mathbb X$ if it is a positive  additive functional with defining set $\Lambda\in \FF_\infty$ and exceptional set $N$, and 
\begin{enumerate}[(a)]
\item $[0,\infty)\ni t\mapsto A_t (\omega)$ is continuous for every $\omega\in\Lambda$.
\end{enumerate}
\end{definition}

If $N=\emptyset $, then $A$ is called a strict PCAF of $\mathbb X$.

A Borel measure $\mu$ on $E$ is called  {\em strictly smooth}  if it is
smooth and  there exists an increasing sequence $\{B_n\}$ of Borel finely-open
subsets of $E$ such that $\bigcup_{n\ge 1} B_n=E$,  and
$R(\mathbf{1}_{B_n}\cdot|\mu|)$ is bounded for any $n\ge 1$.
By \cite[Theorem 5.1.7]{FOT}, there is a one-to-one correspondence between strict PCAFs 
and positive strictly smooth measures. By \cite[Theorem 2.2.4]{FOT}, for any positive smooth measure $\nu$,
there exists an increasing sequence $\{\nu_n\}$ of strictly smooth measures such that $\nu_n\nearrow \nu$, i.e.
$\nu_n(B)\nearrow \nu(B)$ for any $B\in\BB(E)$.

For  given $\alpha\ge 0$, $f\in\BB^+(E)$, and non-negative $\mathbb F$-adapted  c\`adl\`ag process  $Y$,
we let 
\begin{equation}
\label{eq.dofcaf12}
\phi^{\alpha,f}_Y(x)=E_x\int_0^\zeta e^{-\alpha t}f(X_t)e^{-Y_t}\,dt,\quad x\in E.
\end{equation}
The following result has been  proven in \cite{K:NA1}.  
Its close formulations  can be found in many  publications, however it should be emphasized that  the assertion of the  result below  provides a point-by-point analysis of the exceptional sets (e.g.  (iv) is well known but with  "q.e." in place of  "$x\in E_\nu$";  properties  (i) and (v) together, identify $N_\nu$ as a minimal exceptional set for an PCAF $A$ associated with $\nu$).

\begin{proposition}
\label{prop4.1pw}
Let $\nu$ be a positive smooth measure on $E$ and let
\begin{align*}
N_{\nu}=E\setminus E_{\nu},\qquad E_\nu=\Big\{x\in E:\exists& \, V_x  \mbox{ - finely-open neighborhood
of $x$}\nonumber \\
&\mbox{such that } \int_{V_x} G(x,y)\,\nu(dy)<\infty\Big\}.
\end{align*}
Let $\{\nu_n\}$ be a sequence of positive strictly smooth measures
such that $\nu_n\nearrow \nu$, and for $n\ge1$ let $A^n$ be a
strict PCAF of $\mathbb X$ in the Revuz correspondence with
$\nu_n$. Then,
\begin{enumerate}[\rm(i)]
\item The process
$A_t:=\sup_{n\ge 1} A^n_t$, $t\ge 0$, is a PCAF of $\BX$ with
the exceptional set $N_\nu $.
\item $\phi^{\alpha,f}_{A}$
is finely-continuous for any $\alpha>0$ and $f\in\BB^+_b(E)$.
\item If $f\in\BB^+(E)$ and $Rf$ is finite, then $\phi^{0,f}_{A}$
is finely-continuous.

\item For all $x\in E_\nu$ and $f\in\BB^+(E)$,
\begin{equation}
\label{eq4.1pw}
\mathbb E_x\int_0^\zeta f(X_t)\,dA_t=\int_EG(x,y)f(y)\,\nu(dy),
\end{equation}

\item For every $x\in N_\nu$, $P_x(A_t=\infty,\, t> 0)=1$.
\end{enumerate}
\end{proposition}

From now on, for a given smooth measure $\nu$, we denote by $A^\nu$ the PCAF of $\mathbb X$ with exceptional set $N_\nu$ constructed in Proposition \ref{prop4.1pw}. The one-to-one correspondence between PCAFs of $\mathbb X$ and positive smooth measures, expressed in our case by (\ref{eq4.1pw}), is called the Revuz duality.

In what follows we adopt the convention that for any $\mathbb F$-adapted positive process $Y$ and positive smooth measure $\nu$,
\begin{equation}
\label{eq.agreem}
\int_0^\zeta Y_r\,dA^\nu_r:=\lim_{n\rightarrow \infty}\int_0^\zeta Y_r\,dA^{\nu_n}_r\quad \mbox{a.s.},
\end{equation}
where $\{\nu_n\}$ is as in Proposition \ref{prop4.1pw}.

In the sequel, to emphasize  the dependence of the set $E_\nu$ on the operator $A$, we sometimes write $E_\nu(A)$, $N_\nu(A)$
instead of $E_\nu, N_\nu$. Observe that for any open set $D\subset E$ and positive smooth measure $\nu$,
\begin{equation}
\label{eq2.ier}
N_\nu(A_{|D})=N_\nu(A)\cap D.
\end{equation}
Indeed, from  (\ref{eq.rrr1}) and  (\ref{eq.rrr2}) it follows that  $A^\nu_{\cdot\wedge \tau_D}$ is a PCAF of $\mathbb X^D$ in Revuz duality with $\nu_{\lfloor D}$. Since, by Proposition \ref{prop4.1pw}, $A^\nu_{t}<\infty$, $t<\zeta$,  or $A^\nu_t=\infty$, $t>0$, we easily get  (\ref{eq2.ier}).

Thanks to the notion of PCAF of $\mathbb X$ one can give a
beautiful  probabilistic interpretation of the semigroup
$(T^\nu_t)_{t\ge 0}$ generated by the operator $-A+\nu$ (see
Section \ref{sec2.1}). It can be viewed as a generalization  of
the famous Feynman-Kac formula.
By \cite[Theorem A.2.11]{FOT}, there exists a Hunt process
\[
\mathbb X^\nu=((X_t)_{t\ge 0},\, (P^\nu_x)_{x\in E\cup \{\partial\}},\,\mathbb F^\nu=\{\FF^\nu_t,\, t\ge 0\},\,\zeta,\, (\theta_t)_{t\ge 0})
\]
associated with the form $(\EE^\nu,\mathfrak D(\EE^\nu))$ in the sense that  for every $f\in \BB(E)\cap L^2(E;m)$,
\begin{equation}
\label{eq4.s23}
T^\nu_tf(x)=\mathbb E^\nu_xf(X_t)\quad m\mbox{-a.e. }x\in E.
\end{equation}
We set
\[
P^\nu_tf(x)= \mathbb E^\nu_xf(X_t),\qquad R^\nu_\alpha f(x)= \mathbb E^\nu_x\int_0^\zeta e^{-\alpha t} f(X_t)\,dt,\quad x\in E,\quad \alpha\ge0,
\]
where $\mathbb E^{\nu}_x$ stands for the expectation with respect to $P^{\nu}_x$.
We put $R^\nu:=R^\nu_0$.
By \cite[Section 6.1]{FOT},
\begin{equation}
\label{eq4.s21}
P^\nu_tf(x)= \mathbb E_xe^{-A^\nu_t}f(X_t),\qquad R^\nu_\alpha f(x)=\mathbb E_x\int_0^\zeta e^{-\alpha t} e^{-A^\nu_t}f(X_t)\,dt,\quad x\in E_\nu.
\end{equation}
By \cite[Exercise 6.1.1]{FOT}, $-A+\nu$  possesses the Green function $G^\nu$ on  $E_\nu\times E_\nu$ and
\begin{equation}
\label{eq4.1}
G^\nu(x,y)+\int_{E_\nu}G(x,z)G^\nu(z,y)\,\nu(dz)=G(x,y),\quad (x,y)\in E_\nu\times E_\nu.
\end{equation}
From this identity and symmetry of $G$ and $G^\nu$, we infer that for any  $\mu\in\MM^+(E)$,
\begin{equation}
\label{eq4.2}
R^\nu\mu+R((R^\nu\mu)\cdot\nu)=R^\nu\mu+R^\nu(R(\mu_{\lfloor E_\nu})\cdot\nu)= R(\mu_{\lfloor E_\nu})\quad\mbox{in}\,\,\, E_\nu.
\end{equation}

\section{Irreducibility and Feynman-Kac formula}

In this section, we recall the notion of irreducibility of Markov semigroups $(T_t)_{t\ge 0}$, which in some sense (see Section \ref{Sec6})
is equivalent to obeying by $-A$ the strong maximum principle. We close the section with a simple  proposition
which suggests  how by the Feynman-Kac representation for  a function $u:E\to\mathbb R$ one can deduce the structure of the set $\{u=0\}$.

We say that an  $m$-measurable set $C\subset E$ is $(T_t)_{t\ge 0}$-invariant if for every
$f\in\BB^+(E)$, $T_t(\mathbf{1}_Cf)=\mathbf{1}_CT_tf,\, t\ge 0,\, m$-a.e.
A semigroup $(T_t)_{t\ge 0}$ is called irreducible if any invariant set $C$ satisfies $m(C)=0$ or $m(E\setminus C)=0$. It is known (see, e.g.,  \cite[Proposition 2.10]{BCR}) that an $m$-measurable set $C\subset E$ is $(T_t)_{t\ge 0}$-invariant  if and only if  there exists an excessive function $u_1$ (or $u_2$) such that $C=\{u_1=0\}$ (or $C=\{u_2<\infty\}$) $m$-a.e.
By \cite[Theorem 1.6.1]{FOT}, an $m$-measurable set $C\subset E$ is $(T_t)_{t\ge 0}$-invariant if and only if  for every $u\in \mathfrak D(\EE)$, $\mathbf{1}_C u\in \mathfrak D(\EE)$.

Let us also note here, that for any $\alpha$-excessive function $w$
the set $F:=\{w=0\}$ is finely-open (it is obviously  finely-closed as $w$ is finely-continuous).
Indeed, let  $V_n:=\{w>1/n\}$. Then, since $(e^{-\alpha t}w(X_t))_{t\ge 0}$ is a c\`adl\`ag supermartingale,  we have
\[
\mathbb E_x e^{-\alpha\sigma_{V_n}}w(X_{\sigma_{V_n}})\le w(x),\quad x\in E.
\] 
Note that for $x\in F$, the right-hand side of the above inequality equals zero. On the other hand, since $w(X)$
is c\`adl\`ag, we have that $e^{-\alpha\sigma_{V_n}}w(X_{\sigma_{V_n}})\ge e^{-\alpha\sigma_{V_n}}1/n>0$ provided  $\sigma_{V_n}<\infty$.
Therefore, for any $x\in F$, $P_x(\sigma_{V_n}<\infty)=0$. Letting $n\to \infty$, we get  $P_x(\tau_F<\infty)=0,\, x\in F$.
In other words, $F$ is finely-open.

\begin{lemma}
\label{lm4.1}
A symmetric Markov $C_0$-semigroup $(T_t)_{t\ge 0}$ on $L^2(E;m)$ satisfying the absolute continuity condition is irreducible if and only if
its Green function $G$ is strictly positive on $E\times E$.
\end{lemma}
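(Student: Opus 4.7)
The plan is to prove the two implications separately, leveraging the characterization just recalled that an $m$-measurable set $A$ is $(T_t)$-invariant iff $A=\{u=0\}$ $m$-a.e.\ for some excessive $u$. The auxiliary ingredients will be the symmetry $p(t,x,y)=p(t,y,x)$, the Chapman--Kolmogorov identity, and the Hunt-process property $P_x(\zeta>0)=1$ at every $x\in E$.

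For the direction ``$G>0$ implies irreducibility'', I would take an $m$-measurable $(T_t)$-invariant set $B$ and test the identity $T_t(\mathbf{1}_B f)=\mathbf{1}_B T_tf$ against $f=\mathbf{1}_{E\setminus B}$. This forces $T_t\mathbf{1}_{E\setminus B}=0$ $m$-a.e.\ on $B$ for every $t\ge 0$; integrating in $t$ and applying Fubini together with the absolute continuity condition then yields
\[
R\mathbf{1}_{E\setminus B}(x)=\int_{E\setminus B}G(x,y)\,m(dy)=0 \quad\mbox{for $m$-a.e.\ }x\in B.
\]
Pointwise strict positivity of $G$ forces the right-hand side to be strictly positive whenever $m(E\setminus B)>0$, so we must have $m(B)=0$.

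For the converse I would fix $y_0\in E$, set $u(x):=G(x,y_0)$, and first verify that $u$ is excessive by computing
\[
P_tu(x)=\int_t^\infty p(s,x,y_0)\,ds
\]
via Chapman--Kolmogorov and Fubini, noting that the right-hand side is bounded by $u(x)$ and increases to $u(x)$ as $t\downarrow 0$ by monotone convergence. The invariance characterization then makes $\{u=0\}$ an invariant set, so irreducibility leaves two cases: $m(\{u>0\})=0$ or $m(\{u=0\})=0$. I would eliminate the first case by symmetry $G(x,y_0)=G(y_0,x)$: if $G(y_0,\cdot)=0$ $m$-a.e., then $R\mathbf{1}_E(y_0)=E_{y_0}\zeta=0$, contradicting $P_{y_0}(\zeta>0)=1$. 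In the remaining case I would argue by contradiction: if $u(x_0)=0$ at some $x_0\in E$, excessivity gives $0=u(x_0)\ge P_tu(x_0)=E_{x_0}[u(X_t)\mathbf{1}_{X_t\in E}]$, so $u(X_t)=0$ $P_{x_0}$-a.s.\ on $\{X_t\in E\}$, which means $P_{x_0}(X_t\in E\setminus\{u=0\})=0$; but $\{u=0\}$ being $m$-null forces $P_{x_0}(X_t\in E\setminus\{u=0\})=P_{x_0}(t<\zeta)$, which is positive for small $t$ by the Hunt property. Since $y_0\in E$ was arbitrary, this yields $G>0$ on $E\times E$.

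The main technical obstacle is precisely the last pointwise upgrade, from ``$G(\cdot,y_0)>0$ $m$-a.e.''\ to ``$G(\cdot,y_0)>0$ everywhere''. It crucially uses both the excessivity of $u$ (which propagates the vanishing of $u$ along the whole process trajectory from any one of its zeros) and the Hunt-process fact that no point of $E$ is instantly killed, so that $P_{x_0}(t<\zeta)$ is positive for small $t$. Beyond this, the argument reduces to a clean formal manipulation of invariance, symmetry, and Fubini.
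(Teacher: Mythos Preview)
Your argument is correct. For the easy direction ($G>0\Rightarrow$ irreducible) the paper simply says ``sufficiency is clear'' while you spell out the standard computation; there is no real difference here.

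For the substantive direction (irreducible $\Rightarrow G>0$) you take a genuinely different route from the paper. The paper quotes \cite[Exercise 4.7.1]{FOT} to obtain directly that $R\mathbf{1}_V(x)>0$ for every open $V$ and every $x\in E$, deduces $G(x,\cdot)>0$ $m$-a.e.\ for each fixed $x$, symmetrizes, and then feeds both a.e.\ statements into a Chapman--Kolmogorov identity to get pointwise positivity of $G(x,y)$. You instead fix $y_0$, verify that $u=G(\cdot,y_0)$ is excessive, and invoke the characterization of invariant sets as $\{u=0\}$ to force $m(\{u=0\})=0$; the pointwise upgrade is then obtained by combining excessivity with the absolute continuity condition and the Hunt property $P_{x_0}(\zeta>0)=1$. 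Your approach is more self-contained (no appeal to the cited exercise) and makes explicit exactly which structural properties of the Hunt process are used, at the price of a slightly longer contradiction argument; the paper's version is shorter once the external reference is granted. Both ultimately rest on Chapman--Kolmogorov and on the fact that no point is an instantaneous trap.
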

\begin{proof}
Sufficiency is clear. By \cite[Exercise 4.7.1]{FOT}, for every finely-open $V\subset E$ such that $Cap_A(V)>0$, we have
\[
\int_V G_\alpha(x,y)\,m(dy)=\mathbb E_x\int_0^\zeta e^{-\alpha  t}\mathbf{1}_V(X_t)\,dt>0,\quad x\in E,\, \alpha\ge 0.
\]
Since $F_\alpha(x):=\{y\in E: G_\alpha(x,y)=0\}$ is finely-open (see  the comment preceding the lemma), we get by the above equation  that $Cap_A(F_\alpha(x))=0$ for any $\alpha\ge 0,\, x\in E$. Consequently, for every $x\in E$, $G_\alpha(x,\cdot)>0$ $m$-a.e., and so, by symmetry of $G_\alpha$,  $G_\alpha(\cdot,y)>0$ $m$-a.e. for every $y\in E$.
From  this and \eqref{eq4.1} applied to $\nu=m$, we infer that 
\[
G(x,y)\ge \int_EG(x,z)G_1(z,y)\,m(dz)>0,\quad x,y\in E.
\]
\end{proof}

\begin{remark}
\label{rem3.fc1}
Since it is known that the Green function for $(\Delta^\alpha)_{|D}$ is strictly positive
(see e.g. \cite{Grzywny}), it follows  from the above lemma that
for any open set $D\subset \BR^d$ and $\alpha\in (0,1)$, the semigroup generated by  $(\Delta^\alpha)_{|D}$ is irreducible.
Notice that this is not true for the classical Dirichlet Laplacian, because by \cite{Ramaswamy}, $\Delta_{|D}$ is irreducible if and only if $D$ is finely-connected.
\end{remark}

\begin{remark}
Let $(\EE,\mathfrak D(\EE))$ be a regular symmetric Dirichlet form on $L^2(E;m)$.  If $E$ is finely-connected, then $(T_t)_{t\ge 0}$ is irreducible (see \cite{Ramaswamy}).
If $\EE$ is local (i.e. $\EE(u,v)=0$ for any $u,v\in \mathfrak D(\EE)$ such supp$[u]\cap$supp$[v]=\emptyset$), then $(T_t)_{t\ge 0}$ is irreducible if and only if $E$ is finely-connected (see \cite{Ramaswamy}).
\end{remark}

\begin{lemma}
\label{lm4.2}
Let $\nu$ be a non-trivial positive smooth measure on $E$ and $(T_t)_{t\ge 0}$ be irreducible.
Then for every $x\in E$,
\[
P_x(\exists_{t>0}\,\, A^\nu_t>0)>0.
\]
\end{lemma}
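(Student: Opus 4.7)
\medskip

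\noindent\textbf{Proof plan.} The plan is to split according to whether $x$ belongs to the exceptional set $N_\nu$ or to its complement $E_\nu$, and in each case to reduce to a direct consequence of results already established in the excerpt. The only substantive input will be the Revuz formula (\ref{eq4.1pw}) together with the strict positivity of $G$ provided by Lemma \ref{lm4.1}.

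\medskip

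\noindent\textbf{Case $x\in N_\nu$.} Here Theorem \ref{prop4.1pw}(iv) applies immediately: $P_x(A^\nu_t=\infty,\,t>0)=1$, so in particular $P_x(\exists_{t>0}\,A^\nu_t>0)=1>0$. Nothing further is needed.

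\medskip

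\noindent\textbf{Case $x\in E_\nu$.} Since $A^\nu$ is continuous and non-decreasing with $A^\nu_0=0$, the event $\{\exists_{t>0}\,A^\nu_t>0\}$ coincides with $\{A^\nu_\zeta>0\}$ (using the convention $A^\nu_t=A^\nu_\zeta$ for $t\ge\zeta$). Hence it suffices to show $E_xA^\nu_\zeta>0$. Applying (\ref{eq4.1pw}) with $f\equiv 1$ yields
\[
E_xA^\nu_\zeta=E_x\int_0^\zeta 1\,dA^\nu_t=\int_E G(x,y)\,\nu(dy).
\]
By hypothesis $(T_t)_{t\ge 0}$ is irreducible and satisfies the absolute continuity condition, so Lemma \ref{lm4.1} gives $G(x,y)>0$ for every $y\in E$. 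Since $\nu$ is nontrivial, there exists a Borel set $B\subset E$ with $\nu(B)>0$, and then $\int_E G(x,y)\,\nu(dy)\ge \int_B G(x,y)\,\nu(dy)>0$ (the integrand being strictly positive on $B$). Consequently $E_xA^\nu_\zeta>0$, which forces $P_x(A^\nu_\zeta>0)>0$, as required.

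\medskip

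\noindent\textbf{Where a reader might worry.} The integral $\int_E G(x,y)\,\nu(dy)$ may well be $+\infty$ for points $x\in E_\nu$ (the definition of $E_\nu$ only guarantees integrability on some finely open neighbourhood). This is not an obstacle: either value, finite and positive or infinite, yields $E_xA^\nu_\zeta>0$ and hence the desired conclusion. The only real subtlety is the treatment of $x\in N_\nu$, where the Revuz identity (\ref{eq4.1pw}) is not directly available; this is precisely what Theorem \ref{prop4.1pw}(iv) is designed to handle, and so the case split is the natural and, in fact, the main structural step of the argument.
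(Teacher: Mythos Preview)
Your proof is correct and follows essentially the same approach as the paper: split into $x\in N_\nu$ (handled by Theorem \ref{prop4.1pw}(iv)) and $x\in E_\nu$ (handled by the Revuz identity (\ref{eq4.1pw}) combined with strict positivity of $G$ from Lemma \ref{lm4.1}). Your version simply spells out a few more details, such as the identification of the event with $\{A^\nu_\zeta>0\}$ and the remark that $\int_E G(x,y)\,\nu(dy)$ may be infinite.
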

\begin{proof}
By Proposition \ref{prop4.1pw}(iv), the assertion of the lemma holds  true for $x\in N_\nu$. If $x\in E_{\nu}$, then by (\ref{eq4.1pw}),
\[
\mathbb E_x\int_0^\zeta\,dA^\nu_r=\int_EG(x,y)\,\nu(dy),\quad x\in E_\nu.
\]
By Lemma \ref{lm4.1}, $G(x,y)>0,\, x,y\in E$. From this, the fact that $\nu$ is non-trivial and the above equality, we conclude that the result also  holds  true for $x\in E_{\nu}$.
\end{proof}

\begin{proposition}
\label{prop4.1}
Let $(T_t)_{t\ge 0}$ be irreducible and $u$ be a positive function on $E$. If there exist positive smooth measures $\mu,\nu$ such that $\mu$ is non-trivial and
\[
u(x)\ge \mathbb E_x\int_0^\zeta e^{-A^\nu_r}\,dA^\mu_r,\quad x\in E,
\]
then $\{u=0\}\subset N_\nu$.
\end{proposition}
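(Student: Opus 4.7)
The plan is to argue the contrapositive: for any $x\in E_\nu:=E\setminus N_\nu$, show $u(x)>0$. By the hypothesis of the proposition this reduces to establishing
\[
E_x\int_0^\zeta e^{-A^\nu_r}\,dA^\mu_r > 0.
\]

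My idea is to work along the approximating sequence $\mu_n\nearrow\mu$ of strictly smooth measures from Theorem~\ref{prop4.1pw}. Since $\mu$ is nontrivial, $\mu(B)>0$ for some Borel $B$, and monotone convergence $\mu_n(B)\to\mu(B)$ shows $\mu_n$ is nontrivial for every sufficiently large $n$; fix one such $n$. Because $\mu_n$ is strictly smooth, $A^{\mu_n}$ is a \emph{strict} PCAF, hence continuous, nondecreasing, and finite on $[0,\zeta)$ under $P_x$ for every $x\in E$. Applying Lemma~\ref{lm4.2} to $\mu_n$ — both hypotheses, irreducibility of $(T_t)_{t\ge0}$ and nontriviality of $\mu_n$, are in force — gives
\[
P_x(\exists\, t>0:\ A^{\mu_n}_t>0)>0.
\]
Using $\zeta>0$ $P_x$-a.s., together with continuity and monotonicity of $A^{\mu_n}$, this event coincides up to a $P_x$-null set with $\bigcup_{T\in\mathbb{Q}_+}\{T<\zeta,\, A^{\mu_n}_T>0\}$, so some rational $T>0$ furnishes an event $\Omega^T:=\{T<\zeta,\,A^{\mu_n}_T>0\}$ with $P_x(\Omega^T)>0$.

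Now I would invoke the assumption $x\in E_\nu$, which says $x$ lies outside the exceptional set of the PCAF $A^\nu$; hence $A^\nu_T<\infty$ $P_x$-a.s., so $e^{-A^\nu_T}>0$. Monotonicity of $A^\nu$ gives $e^{-A^\nu_r}\ge e^{-A^\nu_T}$ for $r\in[0,T]$, so on $\Omega^T$
\[
\int_0^\zeta e^{-A^\nu_r}\,dA^{\mu_n}_r \ \ge\ e^{-A^\nu_T}\,A^{\mu_n}_T \ >\ 0,
\]
whence $E_x\int_0^\zeta e^{-A^\nu_r}\,dA^{\mu_n}_r>0$. Since the Revuz correspondence gives $A^{\mu_n}\nearrow A^\mu$ (recall $A^\mu_t=\sup_n A^{\mu_n}_t$ from Theorem~\ref{prop4.1pw}), the convention $\int Y\,dA^\mu=\lim_n\int Y\,dA^{\mu_n}$ combined with monotone convergence yields
\[
u(x)\ \ge\ E_x\int_0^\zeta e^{-A^\nu_r}\,dA^\mu_r\ \ge\ E_x\int_0^\zeta e^{-A^\nu_r}\,dA^{\mu_n}_r\ >\ 0,
\]
as required.

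The main obstacle is handling the possibility $x\in E_\nu\cap N_\mu$: there $A^\mu$ is infinite at every positive time by Theorem~\ref{prop4.1pw}(iv), so the integral against $dA^\mu$ is only meaningful via the approximation convention, which forces the argument to be run at the level of the $\mu_n$ rather than $\mu$ itself. The other critical point is the positivity $e^{-A^\nu_T}>0$, which is precisely where $x\in E_\nu$ is used: outside $E_\nu$ the same Theorem~\ref{prop4.1pw}(iv), applied to $\nu$, would force the integrand to vanish identically.
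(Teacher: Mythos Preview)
Your proof is correct and follows essentially the same strategy as the paper: both hinge on Lemma~\ref{lm4.2} to obtain nontriviality of the relevant additive functional and on the membership $x\in E_\nu$ to guarantee $e^{-A^\nu_r}>0$ for $r<\zeta$. The only cosmetic difference is that the paper argues by contradiction and applies Lemma~\ref{lm4.2} directly to $\mu$ (after first noting that the vanishing of the integral forces $x\in E_\mu$), whereas you work contrapositively at the level of the strict approximation $\mu_n$, which sidesteps that case distinction.
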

\begin{proof}
Suppose that $u(x)=0$ for some $x\in E_\nu$.
Then  $\mathbb E_x\int_0^\zeta e^{-A^\nu_r}\,dA^\mu_r=0$, which
implies, in particular, that $x\in E_\mu$ (cf. \eqref{eq.agreem}). On the other hand, by
Lemma \ref{lm4.2}, $P_x(\exists_{t<\zeta}\,\, A^\mu_t>0)>0$.
Therefore $P_x(\exists_{t<\zeta}\,\, A^\nu_t=\infty)>0$. So, $x\in
N_\nu$, a contradiction.
\end{proof}

\section{Feynman-Kac representation for supersolutions} \label{Sec5}

In this section, we show that any   function $u$
satisfying (\ref{eq1.1}) has a finely-continuous version on
$E_\nu$, which admits a Feynman-Kac representation. This result
plays a pivotal role   in the proof of the main result of the paper. 

We start with providing  rigorous  meaning to  (\ref{eq1.1}). Set
\[
\mathcal U_b:=\{\eta\in \mathfrak D(A):\eta\in \BB^+_b(E),\, A\eta\,\,\mbox{is
bounded}\}.
\]

\begin{lemma}
\label{lm5.1}
The set $\mathcal U_b$ is dense in  $L^{2,+}(E;m)$ equipped with the standard norm.
\end{lemma}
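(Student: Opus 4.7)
The plan is a standard resolvent approximation. First, since truncations $f_n := f\wedge n$ converge to $f$ in $L^2$ for any $f\in L^{2,+}(E;m)$ by dominated convergence, it is enough to prove that every bounded nonnegative $f\in L^{2,+}(E;m)\cap L^\infty(E;m)$ can be approximated in $L^2$-norm by elements of $F$. Fix such an $f$ and choose a bounded Borel representative (still denoted by $f$).

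For $\alpha>0$, I would define the pointwise (probabilistic) resolvent
\[
\eta_\alpha(x)=\int_0^\infty e^{-\alpha t}P_tf(x)\,dt,\qquad x\in E.
\]
Because $(T_t)_{t\ge 0}$ is a Markov semigroup and $P_t f$ is an $m$-version of $T_t f$, the function $\eta_\alpha$ is Borel, nonnegative, and bounded by $\|f\|_\infty/\alpha$; moreover its $m$-equivalence class is exactly the $L^2$-resolvent $R_\alpha f=(\alpha-A)^{-1}f$. Hence $\eta_\alpha\in D(A)$ and
\[
A\eta_\alpha=\alpha\eta_\alpha-f,
\]
which is bounded because both $\eta_\alpha$ and $f$ are. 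Therefore $\alpha\eta_\alpha=\alpha R_\alpha f$ belongs to $\mathcal{B}^+_b(E)\cap D(A)$ with bounded image under $A$, i.e.\ $\alpha\eta_\alpha\in F$.

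It remains to invoke the standard fact from $C_0$-semigroup theory that $\alpha R_\alpha f\to f$ in $L^2(E;m)$ as $\alpha\to\infty$. Combined with the truncation step, this proves that $F$ is dense in $L^{2,+}(E;m)$.

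There is no real obstacle here; the only subtlety is that $F$ requires genuine bounded Borel representatives rather than $L^2$-equivalence classes, which is why I would work with the pointwise resolvent $\int_0^\infty e^{-\alpha t}P_tf\,dt$ coming from the Hunt process $\mathbb X$, rather than with the abstract operator $(\alpha-A)^{-1}$.
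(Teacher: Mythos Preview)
Your proof is correct and follows essentially the same two-step strategy as the paper: first truncate $f$ to reduce to bounded data, then apply a semigroup-based smoothing that lands in $D(A)$ with bounded $A$-image. The only difference is that the paper uses the time average $\frac{1}{t}\int_0^t T_s(f\wedge k)\,ds$ (whose $A$-image is $\frac{1}{t}(T_t(f\wedge k)-(f\wedge k))$, bounded by Markovianity) and lets $t\to 0$, whereas you use the resolvent approximant $\alpha R_\alpha(f\wedge n)$ and let $\alpha\to\infty$; your explicit care in taking the pointwise probabilistic resolvent $\int_0^\infty e^{-\alpha t}P_tf\,dt$ to secure a genuine bounded Borel representative is exactly the right remark, and the paper implicitly does the analogous thing with $P_s$ in place of $T_s$.
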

\begin{proof}
Let $f\in L^{2,+}(E;m)$. By \cite[Theorem 2.4]{Pazy} and the fact that $(T_t)_{t\ge 0}$ is Markov,  $\frac 1t\int_0^tT_s(f\wedge k)\,ds\in \mathcal U_b,\, t>0,\, k>0$.  By the contractivity and the strong continuity of $(T_t)_{t\ge 0}$ we get the result.
\end{proof}

For positive   $u\in L^1(E;m)\cap L^1(E;\nu)$  we let
\[
I_u[\eta]:= \langle u,-A\eta\rangle+\langle u\cdot\nu,\tilde \eta\rangle ,\quad \eta\in \mathcal U_b.
\]

 \begin{definition}
\label{def.3.3.4maf1}
We say that  $\mathcal C \subset \mathcal U_b$ is a set of test functions for $A$ (we occasionally write $\mathcal C(A)$ to underline the operator) if 
 there exists $\mathcal C_1\subset \mathcal U_b$ that satisfies 
 \begin{enumerate}
 \item[{\rm (a)}] $\mathcal C\subset\mathcal C_1$,
 \item[{\rm (b)}]  $\mathcal C_1$ is dense in   $L^{2,+}(E;m)$, and satisfies  $R_\alpha \mathcal C_1\subset \mathcal C_1,\, \alpha>0$,
 \item[{\rm (c)}] for any positive $u\in L^1(E;m)\cap L^1(E;\nu)$, that is assumed additionally to be quasi-continuous in case $\nu$
 is not absolutely continuous with respect to $m$,  we have 
 \[
I_u[\eta]\ge 0,\quad \eta\in \mathcal C\qquad \Rightarrow \qquad  I_u[\eta]\ge 0,\quad \eta\in \mathcal C_1.
 \]
 \end{enumerate}
 \end{definition}
 
 \begin{definition}
\label{def.3.3.4maf}
We say that an $\mathbb F$-adapted process $A=(A_t)_{t\ge0}$ is a (local)  {\em martingale additive functional} (MAF)
of $\mathbb X$ if it is an additive functional with defining set $\Lambda\in \FF_\infty$ and exceptional set $N$, and 
 $A$ is a (local) martingale under measure $P_x$ for any $x\in E\setminus N$.
\end{definition}

\begin{theorem}
\label{prop5.1}  Let $\mathcal C$ be a set of test functions for $A$. Assume that $u\in L^1(E;m)\cap L^1(E;\nu)$ is a
positive function such that
\begin{equation}
\label{eq5.1a}
\langle u,-A\eta\rangle+\langle u\cdot\nu,\tilde \eta\rangle\ge 0,\quad \eta\in \mathcal C.
\end{equation}
 Suppose that  either $\nu\ll m$ or $u$ is quasi-continuous.
Then there exists an $m$-version $\check u$
of $u$ which is   finely-continuous on $E_\nu$. Moreover,  for every $k\ge 0$ there exists a positive smooth measure $\beta_k$ such that
\begin{equation}
\label{eq5.7a}
\check u_k(x)=\mathbb E_xe^{-A^\nu_{t\wedge \tau_D}}\check u_k(X_{t\wedge \tau_D})+\mathbb E_x\int_0^{t\wedge \tau_D}e^{-A^\nu_r}\,dA^{\beta_k}_r,\quad t\ge 0,
\end{equation}
for every open relatively compact set $D\subset E$ and every $x\in D$, where $u_k=u\wedge k$.
\end{theorem}
\begin{proof}
The proof shall be divided into five steps.\\
{\bf Step 1.} We  shall show that $\alpha R_\alpha(u+R(u\cdot\nu))\le u+R(u\cdot\nu)$ $m$-a.e. for every $\alpha>0$.
Let $\eta\in \mathcal C_1$ ($\mathcal C_1$ is an extension of $\mathcal C$ according to Definition \ref{def.3.3.4maf1}). By \cite[Theorem 2.4]{Pazy}
\[
\alpha R_\alpha\eta-\eta= A R_\alpha\eta.
\]
It follows from this and (\ref{eq5.1a}) that
\begin{align*}
\langle u, \alpha R_\alpha\eta-\eta\rangle +\langle R(u\cdot\nu),\alpha R_\alpha\eta-\eta\rangle&=\langle u, AR_\alpha\eta\rangle+\langle R(u\cdot\nu),\alpha R_\alpha \eta-\eta\rangle\\&\le \langle u\cdot\nu,R_\alpha\eta\rangle+\langle R(u\cdot\nu),\alpha R_\alpha\eta-\eta\rangle=0,
\end{align*}
where the last equality being a consequence of the resolvent   identity.
Since $\eta$ was an arbitrary function from $\mathcal C_1$, and  $\mathcal C_1$ is dense in $L^{2,+}(E;m)$, we get  the desired property.

{\bf Step 2.} We show that $u$ has an $m$-version $\check u$ which is  finely-continuous on $E_\nu$.
By \cite[Proposition 2.4]{BCR}, $w:=u+R(u\cdot \nu)$ possesses an $m$-version that  is excessive, and we let $\tilde w$ denote this version. 
By the construction, $\tilde w=\lim_{t\searrow 0}P_tw$.
Set $\bar u(x):=\limsup_{t\searrow 0}P_tu(x),\, x\in E$.
Observe that 
\[
\tilde w=\bar u+R(u\cdot\nu).
\]
From this (cf. \eqref{eq.acs0}, \eqref{eq.acs4}), in particular, we deduce that $\bar u$ is quasi continuous. 
Consequently, if $u$ is assumed to be quasi-continuous, then $u=\bar u$ q.e. (see \cite[Theorem 2.1.2]{FOT}) and, as a result,
$\bar u+R(\bar u\cdot\nu)=\bar u+R(u\cdot\nu)$, so $\bar u+R(\bar u\cdot\nu)$ is excessive. 
On the other hand, if $\nu$ is assumed to be absolutely continuous with respect to $m$, then 
clearly $\bar u+R(\bar u\cdot\nu)=\bar u+R(u\cdot\nu)$, as $\bar u$ is an $m$-version of $u$,
thus $\bar u+R(\bar u\cdot\nu)$ is excessive again. By \cite[Theorem III.5.7]{BG} $[\bar u+R(\bar u\cdot\nu)](X)$
is a c\`adl\`ag supermartingale under measure $P_x$ for q.e. $x\in E$.
By \cite[Theorem 3.18]{CJPS} there exists a PCAF $A$ of $\mathbb X$
and a local MAF of $\mathbb X$ such that for q.e. $x\in E$
\begin{equation}
\label{eqmm0}
\bar u(X_t)=\bar u(X_0)+\int_0^t\bar u(X_r)\,dA^\nu_r-A_t+M_t,\quad t\ge 0,\quad P_x\mbox{-a.s.}
\end{equation}
Applying integration by parts formula to  the product $e^{-A^\nu_t}\bar u(X_t)$ yields
\begin{equation}
\label{eqmm1}
e^{-A^\nu_t}\bar u(X_t)=\bar u(X_0)-\int_0^te^{-A^\nu_r}\,dA_r+\int_0^t e^{-A^\nu_r} \,dM_r,\quad t\ge 0,\quad P_x\mbox{-a.s.}
\end{equation}
Let $(\tau_k)$ be a non-decreasing sequence of stopping times such that $\tau_k\to \infty$, $\mathbb E_x\int_0^{\tau_k}e^{-A^\nu_r}\,dA_r\le k$ q.e., and $\Big(\int_0^{t\wedge \tau_k} e^{-A^\nu_r} \,dM_r\Big)_{t\ge 0}$ is a martingale under $P_x$ q.e.
Then, by \eqref{eqmm1}
\begin{equation}
\label{eq.mmqq}
\mathbb E_x e^{-A^\nu_{t\wedge\tau_k}}\bar u(X_{t\wedge\tau_k})\le \bar u(x)\quad\mbox{q.e.}
\end{equation}
From this and Fatou's lemma, we deduce that $P^\nu_t\bar u\le \bar u$ q.e. for any $t\ge 0$ (see also \eqref{eq4.s21}).
Thus, by \cite[Proposition 2.4]{BCR},  $\check u:= \lim_{t\searrow 0} P^\nu_t\bar u$ is excessive on $E_\nu$ with respect to $(P^\nu_t)$.
We put  $\check u(x)=0$ for $x\in N_\nu$. 
Since $\check u$ is an excessive function with respect to $(P^\nu_t)_{t\ge 0}$ on $E_\nu$, it is finely-continuous on $E_\nu$ with respect to $(P^\nu_t)_{t\ge 0}$.
This is equivalent to the fact that the process $t\mapsto e^{-A^\nu_t}\check u(X_t)$ is right-continuous under the measure $P_x$ for $x\in E_\nu$
(see \eqref{eq.acs2} and \eqref{eq4.s21}).
Since for any $x\in E_\nu$ we have  $e^{-A^\nu_t}>0,\, t\ge 0,\, P_x$-a.s.,  we see  that $\check u(X)$ is right-continuous
under the measure $P_x$ for $x\in E_\nu$. In other words, $\check u$ is finely-continuous on $E_\nu$ (see \eqref{eq.acs2}). 
In particular, $\check u$ is quasi-continuous (see \eqref{eq.acs4}). Therefore, with  the aid of  \cite[Theorem 2.1.2]{FOT}, $\check u=\bar u$ q.e. as $\check u, \bar u$ are $m$-versions of $u$, and 
both $\check u$ and $\bar u$ are quasi-continuous. 
As a result, we get, by \eqref{eqmm0}, that for q.e. $x\in E$
\begin{equation}
\label{eqmm0p}
\check u(X_t)=\check u(X_0)+\int_0^t\check u(X_r)\,dA^\nu_r-A_t+M_t,\quad t\ge 0,\quad P_x\mbox{-a.s.}
\end{equation}

{\bf Step 3.}
We  show that (\ref{eq5.7a}) holds q.e. 
Write $\check u_k= \check u\wedge k$. By the Tanaka-Meyer formula  (see, e.g., \cite[IV.Theorem 70]{Protter})
applied to \eqref{eqmm0p} we have
\begin{align}
\label{eq5.3}
\nonumber \check u_k(X_t)&=\check u_k(X_0) +\int_0^t\mathbf{1}_{\{\check u\le k\}}(X_r)\check u(X_r)\,dA^{\nu}_r\\
&\quad-\int_0^t\mathbf{1}_{\{\check u\le k\}}(X_r)\,dA_r
-C^k_t+\int_0^t\mathbf{1}_{\{\check u\le k\}}(X_{r-})\,dM_r,\quad t\ge 0\,\,\,\mbox{q.a.s.,}
\end{align}
where $C^k$ is an increasing c\`adl\`ag process with $C^k_0=0$.
Let $\{\tau_n\}$  be a fundamental sequence  (for the definition
see, e.g., \cite[Section I.6]{Protter}) for the local martingale
$\int_0^\cdot\mathbf{1}_{\{\check u\le k\}}(X_{r-})\,dM_r$. By
(\ref{eq5.3})
\begin{align*}
\check u_k(x)+\mathbb E_x\int_0^{\tau_n}\mathbf{1}_{\{\check u\le k\}}(X_r)\check u(X_r)\,dA^{\nu}_r=\mathbb E_xC^k_{\tau_n}+\mathbb E_x\check u_k(X_{\tau_n})+\mathbb E_x\int_0^{\tau_n}\mathbf{1}_{\{\check u\le k\}}(X_r)\,dA_r\quad\mbox{q.e.}
\end{align*}
Letting $n\rightarrow \infty$ we get  $\mathbb E_xC^k_\zeta\le k+R(u\cdot\nu)(x)$ q.e. Thus  $\mathbb E_xC^k_\zeta<\infty$  q.e. 
Let $C^{k,p}$ be the dual predictable projection of $C^k$ (see \cite[Section A.3]{FOT}). It exists q.e. since $\mathbb E_xC^k_\zeta<\infty$  q.e. By (\ref{eq5.3}), $C^k$ is a positive additive functional, so by \cite[Theorem A.3.16]{FOT}, $C^{k,p}$ is also a positive additive functional.
By the definition of a Hunt process, $\mathbb X$ is quasi-left continuous.  Therefore, by \cite[Proposition 2, Proposition 4]{CW} every local $\mathbb F$-martingale has only totally inaccessible   jumps.  So, by (\ref{eqmm0p}), $\check u(X)$  has only totally inaccessible   jumps.
Consequently, by (\ref{eq5.3}), $C^{k,p}$ has only totally inaccessible   jumps too. However, $C^{k,p}$ is predictable.
 Therefore  $C^{k,p}$ is continuous. By the Revuz duality, there exists a unique positive smooth measure $\gamma_k$ such that
$C^{k,p}=A^{\gamma_k}$. By the definition of the dual predictable projection, there exists a uniformly integrable martingale $N$ such that $C^k=C^{k,p}+N$. We let  
\[
L^k_t=N_t+ \int_0^{t}\mathbf{1}_{\{\check u\le k\}}(X_{r-})\,dM_r,\quad t\ge 0\,\,\,\mbox{q.a.s.}
\]
Furthermore, by the Revuz duality there exists a positive smooth measure $\mu$ such that $A=A^\mu$.
Consequently, by (\ref{eq5.3})
\begin{align}
\label{eq5.4}
\nonumber \check u_k(X_t)&=\check u_k(X_0)+\int_0^t\mathbf{1}_{\{\check u\le k\}}(X_r)\check u(X_r)\,dA^{\nu}_r\\
&\quad-\int_0^t\mathbf{1}_{\{\check u\le k\}}(X_r)\,dA^{\mu}_r-A^{\gamma_k}_t+L^k_t,\quad t\ge 0\,\,\,\mbox{q.a.s.}
\end{align}
Now,  put $\beta_k=(\mathbf{1}_{\{\check u>k\}}\check u_k)\cdot\nu+\mathbf{1}_{\{\check u\le k\}}\cdot\mu+\gamma_k$. From (\ref{eq5.4}) we conclude that
\begin{equation}
\label{eq5.5}
\check u_k(X_t)=\check u_k(X_0)+\int_0^t\check u_k(X_r)\,dA^{\nu}_r-A^{\beta_k}_t+L^k_t,\quad t\ge 0\,\,\,\mbox{q.a.s.}
\end{equation}
Applying  the integration by parts formula  to the product $e^{-A^\nu_t}\check u_k(X_t)$ yields
\begin{equation*}
e^{-A^\nu_t}\check u_k(X_t)=\check u_k(X_0)-\int_0^te^{-A^\nu_r}\,dA^{\beta_k}_r+\int_0^te^{-A^\nu_r}\,dL^k_r,\quad t\ge 0\,\,\,\mbox{q.a.s.}
\end{equation*}
Let $\{\tau_n\}$  be a fundamental sequence for the local martingale $L^k$.
By the above equation for any stopping time $\alpha$ we have 
\begin{equation}
\label{eqmm5.7}
\check u_k(x)=\mathbb E_xe^{-A^\nu_{\alpha\wedge\tau_D\wedge \tau_n}}\check u_k(X_{\alpha\wedge\tau_D\wedge \tau_n})+\mathbb E_x\int_0^{\alpha\wedge\tau_D\wedge \tau_n}e^{-A^\nu_r}\,dA^{\beta_k}_r.
\end{equation}
Letting $n\to \infty$, we get  that for q.e. $x\in D$ and any stopping time $\alpha$,
\begin{equation}
\label{eq5.7}
\check u_k(x)=\mathbb E_xe^{-A^\nu_{\alpha\wedge\tau_D}}\check u_k(X_{\alpha\wedge\tau_D})+\mathbb E_x\int_0^{\alpha\wedge\tau_D}e^{-A^\nu_r}\,dA^{\beta_k}_r.
\end{equation}
We used here \eqref{eq.dpp19}.

{\bf Step 4.} We  show that $N_{\beta_k}\subset N_\nu$. First  observe that by (\ref{eq5.7}), $\mathbb E_x\int_0^\zeta
e^{-A^\nu_r}\,dA^{\beta_k}_r\le k$ for q.e. $x\in E$. Let
$\{\beta^k_n\}$  be a sequence of smooth measures with bounded
potentials such that $\beta^k_n\nearrow \beta_k$ (see \cite[Theorem 2.2.4]{FOT}). The function
 $w_n=\mathbb E_\cdot\int_0^\zeta
e^{-A^{\nu}_r}\,dA^{\beta^k_n}_r$ is finely-continuous.  Indeed,
by \cite[Lemma 5.1.5]{FOT},
\[
w_n(x)=\mathbb E_x\int_0^{\zeta}\,dA^{\beta^k_n}_r-\mathbb E_x\int_0^\zeta
w_n(X_r)\,dA^\nu_r,\quad x\in E.
\]
By \eqref{eq.acs0} both functions on the right-hand side of the above equation are
finely-continuous. Moreover,   $\mathbb E_\cdot\int_0^{\zeta}\,dA^{\beta^k_n}_r$ is
bounded, so $w_n$ is finely-continuous. Therefore $w_n(x)\le k$
for every $x\in E$ (see \eqref{eq.acs3}). 
From this inequality, definition of $w_n$, and Proposition \ref{prop4.1pw}(iv),  we easily deduce that
$N_{\beta_k}\subset N_\nu$.

{\bf Step 5.} Conclusion. Fix $t>0$, and put
\[
v(x)=\mathbb E_x\int_0^{t\wedge \tau_{D}}e^{-A^\nu_r}\,dA^{\beta_k}_r,
\quad w(x)=\mathbb E_xe^{-A^\nu_{t\wedge{\tau_{D}}}}\check u_k(X_{t\wedge\tau_{D}}),\quad x\in D.
\]
It is an elementary check that  if $s\searrow0$,
then $t\wedge( \tau_{D}\circ\theta_s)+s\searrow t\wedge\tau_{D}$ a.s. (cf. \eqref{eq.elch384}). For  $x\in E_\nu$, if
$s\searrow0$, then
\begin{align}
\nonumber\label{eq5.10} P_s(w)(x)&=P_s(\mathbb E_\cdot e^{-A^\nu_{t\wedge\tau_D}}\check
u_k(X_{t\wedge\tau_D}))(x)\\&
=\mathbb E_xe^{-(A^\nu_{t\wedge( \tau_{D}\circ\theta_s)+s}-A^\nu_s)}\check
u_k(X_{t\wedge(\tau_{D}\circ\theta_s)+s})\rightarrow w(x).
\end{align}
We have used here Markov property of $\mathbb X$, fine continuity of $\check u_k$ on $E_\nu$ and
continuity of $A^\nu$ under the measure $P_x$ for $x\in E_\nu$.
Next, observe that by the strong Markov property of $\mathbb X$,
\begin{align}
\label{eq5.11}
v(x)=R^\nu\beta_k(x)-\mathbb E^\nu_x\big((R^\nu\beta_k)(X_{t\wedge\tau_{D}})\big),\quad x\in E_\nu.
\end{align}
Since $N_{\beta_k}\subset N_\nu$, $A^{\beta_k}$ is a strict PCAF of $\mathbb X^\nu$. Hence, by \cite[Proposition II.4.2]{BG}, $R^\nu\beta_k$
is finely-continuous with respect to $(P^\nu_t)_{t\ge 0}$. In particular, by \eqref{eq.acs2} and \eqref{eq4.s21},
\[
\lim_{s\searrow 0}P_s(R^\nu\beta_k)(x)=\lim_{s\searrow 0}P^\nu_s(R^\nu\beta_k)(x)=R^\nu \beta_k(x),\quad x\in E_\nu.
\]
Since $R^\nu\beta_k$ is bounded one  shows,   as in the case of  $w$, that
\[
P_s(\mathbb E^\nu_\cdot R^\nu\beta_k(X_{t\wedge\tau_{D}}))(x)\rightarrow \mathbb E^\nu_xR^\nu\beta_k(X_{t\wedge\tau_{D}}),\quad x\in E_\nu,\quad  s\searrow 0.
\]
Thus $P_s v(x)\rightarrow v(x)$ for $x\in E_\nu$ as $s\searrow 0$. By \eqref{eq5.7} 
\[
\check u_k=w+v,\quad \mbox{q.e.}
\]
Therefore, by the absolute continuity condition for $\mathbb X$ (see \eqref{eq.acs}),
\[
P_s(\check u_k)(x)=P_s(w)(x)+P_s(v)(x),\quad x\in E_\nu.
\]
Letting $s\searrow 0$  we get the desired result.
\end{proof}

\begin{corollary}
\label{wn.red} Let $\mathcal C$ be a set of test functions for $A$. Let $u\in L^1(E;m)\cap L^1(E;\nu)$ be a
quasi-continuous positive  function such that
\mbox{\rm(\ref{eq5.1a})} is satisfied. Then
\[
\langle u\wedge k,-A\eta\rangle+\langle (\mathbf{1}_{\{u\le k\}}u)\cdot\nu,\eta\rangle\ge 0,\quad \eta\in \mathcal C.
\]
\end{corollary}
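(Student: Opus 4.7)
The strategy is to convert the semimartingale identity \eqref{eq5.5} already established for the finely continuous $m$-version $\check u$ and its truncation $\check u_k=\check u\wedge k$ into a weak analytic inequality against test functions $\eta\in F$. The payoff is that \eqref{eq5.5} is equipped with the explicit decomposition
\[
\beta_k=\mathbf{1}_{\{\check u>k\}}\check u_k\cdot\nu+\mathbf{1}_{\{\check u\le k\}}\mu_d+\gamma_k,
\]
with $\mu_d$ and $\gamma_k$ positive. Hence $\beta_k-\mathbf{1}_{\{\check u>k\}}\check u_k\cdot\nu\ge 0$, and subtracting this term from the semigroup identity turns $\check u_k\cdot\nu$ into precisely the truncated potential $\mathbf{1}_{\{\check u\le k\}}\check u\cdot\nu=\mathbf{1}_{\{u\le k\}}u\cdot\nu$ appearing on the left-hand side of the claim.

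\textbf{Main computation.} I first take expectation in \eqref{eq5.5} after localizing by $t\wedge\tau_{D_n}\wedge\tau_m$, where $\{D_n\}$ is an exhaustion by relatively compact open sets and $\{\tau_m\}$ is a fundamental sequence killing the local martingale $L^k$. Using $0\le\check u_k\le k$ and the bound $E_xA^{\beta_k}_\zeta\le k+R(u\cdot\nu)(x)$ proved q.e.\ in Step 4 of Proposition \ref{prop5.1}, monotone and dominated convergence yield
\[
T_t\check u_k(x)-\check u_k(x)=E_x\int_0^t\check u_k(X_r)\,dA^\nu_r-E_x\int_0^t\,dA^{\beta_k}_r\quad\mbox{q.e. }x\in E.
\]
Pairing against $\eta\in F$ with respect to $m$, using symmetry $\langle T_t\check u_k,\eta\rangle=\langle\check u_k,T_t\eta\rangle$ on the left and on the right the Revuz-duality identity
\[
\int_E\eta(x)\,E_x\int_0^t g(X_r)\,dA^\mu_r\,m(dx)=\int_0^t\langle g\cdot\mu,T_r\eta\rangle\,dr
\]
valid for every positive smooth $\mu$ and positive Borel $g$, I arrive at
\[
\langle\check u_k,T_t\eta-\eta\rangle=-\int_0^t\langle\mathbf{1}_{\{\check u\le k\}}\check u_k\cdot\nu+\mathbf{1}_{\{\check u\le k\}}\mu_d+\gamma_k,T_r\eta\rangle\,dr.
\]
Dropping the non-negative contribution of $\mathbf{1}_{\{\check u\le k\}}\mu_d+\gamma_k$, dividing by $t$ and letting $t\searrow 0$, the left-hand side tends to $\langle\check u_k,-A\eta\rangle$ since $\eta\in D(A)$ with $A\eta$ bounded and $\check u_k\in L^1(E;m)$, while by strong continuity $\frac{1}{t}\int_0^tT_r\eta\,dr\to\eta$ and dominated convergence (using $\mathbf{1}_{\{\check u\le k\}}\check u\le u\in L^1(E;\nu)$) drive the right-hand side to $\langle\mathbf{1}_{\{\check u\le k\}}\check u_k\cdot\nu,\eta\rangle$. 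Since $\check u$ is quasi-continuous and $\nu\ll\mbox{Cap}$, we have $\check u=u$ $\nu$-a.e., so $\mathbf{1}_{\{\check u\le k\}}\check u_k\cdot\nu=\mathbf{1}_{\{u\le k\}}u\cdot\nu$; likewise $\check u_k=u\wedge k$ $m$-a.e., which is the desired inequality.

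\textbf{Main obstacle.} The delicate step is justifying the expectation formula for $T_t\check u_k$: the process $L^k$ in \eqref{eq5.5} is only a local martingale, so one needs the a priori bound $E_xA^{\beta_k}_\zeta\le k+R(u\cdot\nu)(x)<\infty$ q.e.\ together with the uniform estimate $\check u_k\le k$ to swap the limit along $\{\tau_m\}$ with the expectation of the $dA^{\beta_k}$-integral. Both ingredients are exactly what Proposition \ref{prop5.1} supplies, and once the expectation formula is available the rest is a routine Revuz-duality computation followed by the $t\searrow 0$ limit.
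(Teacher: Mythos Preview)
Your approach is correct and is exactly the elaboration the paper has in mind when it writes ``Follows from (\ref{eq5.4})'': take expectations in the semimartingale decomposition, integrate against $\eta$ via Revuz duality, and pass to the limit $t\searrow 0$.

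One sign slip to fix: your displayed identity should read
\[
\langle\check u_k,T_t\eta-\eta\rangle
=\int_0^t\big\langle\mathbf{1}_{\{\check u\le k\}}\check u_k\cdot\nu-\mathbf{1}_{\{\check u\le k\}}\mu_d-\gamma_k,\,T_r\eta\big\rangle\,dr,
\]
since $\check u_k\cdot\nu-\beta_k=\mathbf{1}_{\{\check u\le k\}}\check u_k\cdot\nu-\mathbf{1}_{\{\check u\le k\}}\mu_d-\gamma_k$. Dropping the two negative terms gives $\langle\check u_k,T_t\eta-\eta\rangle\le\int_0^t\langle\mathbf{1}_{\{\check u\le k\}}\check u_k\cdot\nu,T_r\eta\rangle\,dr$, and since $\tfrac{1}{t}(T_t\eta-\eta)\to A\eta$ (not $-A\eta$), the limit yields $\langle\check u_k,A\eta\rangle\le\langle\mathbf{1}_{\{\check u\le k\}}\check u_k\cdot\nu,\eta\rangle$, which is precisely the claim. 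As written, your displayed equation with all three terms carrying the same sign would produce the reversed inequality; the two subsequent sign errors happened to cancel, but the intermediate line is incorrect and should be repaired.
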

\begin{proof}
We adopt the notation of Theorem \ref{prop5.1} and its proof. Let   $\{D_n\}$ be an  increasing sequence
of relatively compact open subsets of $E$ satisfying $\bigcup_{n\ge 1} D_n=E$.
Substituting $\tau_{D_n}$ in place of $t$ in \eqref{eq5.5} (cf. \eqref{eq.dpp19}),
putting the expectation to both sides of \eqref{eq5.5}, and then  letting $n\to \infty$, we find that 
\[
u_k=h^{(k)}-R(u_k\cdot\nu)+R\beta_k\quad\mbox{a.e.,}
\]
where $h^{(k)}(x):=\lim_{n\to\infty}\mathbb E_x\check u_k(X_{\tau_{D_n}}),\, x\in E$. It is an elementary check that $h^{(k)}$
is an excessive function up to $m$-equivalence. Let $\eta\in \mathcal C$. Then, by the above equation,
\[
\langle u_k,-A\eta\rangle+\langle R((\mathbf1_{\{u\le k\}}u)\cdot \nu),-A\eta\rangle=\langle h^{(k)},-A\eta\rangle+\langle R\beta_k,-A\eta\rangle\ge 0.
\]
In the last inequality, we used the facts that $h^{(k)}, R\beta_k$ are excessive functions, and $\eta$ is positive.
Now, one easily concludes the result.
\end{proof}

\section{Location of zeros of non-trivial supersolutions}
\label{Sec6}

It appears that without additional assumption on the Green function $G^\nu$, the  set of positive function $u\in L^1(E;m)\cap L^1(E;\nu)$ satisfying (\ref{eq5.1a}) may be trivial.

\begin{proposition}
\label{prop5.9wd4} There exists a non-trivial quasi-continuous positive  function
$u\in L^1(E;m)\cap L^1(E;\nu)$  satisfying
\mbox{\rm(\ref{eq5.1a})} if and only if  $G^\nu(x_0,\cdot)\in
L^1(E;m)$ for some $x_0\in E_\nu$.
\end{proposition}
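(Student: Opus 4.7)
The plan is to prove each implication separately.

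For sufficiency, I would exhibit $u$ explicitly as an $(-A+\nu)$-potential $u=R^\nu\mu$. By symmetry of $G^\nu$, the hypothesis reads $R^\nu\mathbf{1}(x_0)<\infty$, and since $R^\nu\mathbf{1}$ is $(P^\nu_t)_{t\ge 0}$-excessive, it is $\mathbb{X}^\nu$-finely continuous on $E_\nu$; hence $V_0:=\{R^\nu\mathbf{1}<\infty\}$ is a $\mathbb{X}^\nu$-finely open neighborhood of $x_0$. By the absolute continuity condition, its intersection with any topological neighborhood of $x_0$ has positive $m$-measure, so I may select a Borel set $V$ with $x_0\in V$, $0<m(V)<\infty$, and $R^\nu\mathbf{1}\le N$ on $V$ for some $N$ (take for instance $V=\{R^\nu\mathbf{1}\le N\}\cap K$ with $K$ a compact neighborhood of $x_0$ and $N$ large). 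Set $\mu=\mathbf{1}_V\cdot m$ and $u=R^\nu\mu$. Fubini and the symmetry of $G^\nu$ give $\int u\,dm=\int_V R^\nu\mathbf{1}\,dm\le N\,m(V)<\infty$, while the identity $R^\nu\nu(y)=E_y[1-e^{-A^\nu_\zeta}]\le 1$ on $E_\nu$ (coming from $\int_0^\zeta e^{-A^\nu_r}\,dA^\nu_r=1-e^{-A^\nu_\zeta}$) gives $\int u\,d\nu\le m(V)<\infty$. Nontriviality of $u$ comes from $R^\nu\mu>0$ on $E_\nu$ (via Lemma \ref{lm4.2}), and (\ref{eq5.1a}) from the weak identity $(-A+\nu)u=\mu\ge 0$, i.e.\ $\EE^\nu(u,\eta)=\EE(u,\eta)+\langle u\cdot\nu,\eta\rangle=\langle\mu,\eta\rangle\ge 0$ for $\eta\in F$.

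For necessity, I would apply Proposition \ref{prop5.1} to $u$ to obtain its finely continuous $m$-version $\check u$ on $E_\nu$ together with the decomposition $\check u=h^\nu+R^\nu\mu$ $m$-a.e., where $h^\nu$ is strictly $(P^\nu_t)$-harmonic (so $P^\nu_t h^\nu=h^\nu$ for all $t>0$) and $\mu$ is a positive smooth measure. The crux of the proof, and its main obstacle, is to rule out the harmonic case $\mu=0$. Assuming $\mu=0$, so $\check u=h^\nu\in L^1(E;m)$ is nontrivial, the $L^2$-symmetry of $(P^\nu_t)_{t\ge 0}$ gives
\[
\int h^\nu\,dm=\int P^\nu_t h^\nu\,dm=\int h^\nu\cdot P^\nu_t\mathbf{1}\,dm,
\]
whence $\int h^\nu(1-P^\nu_t\mathbf{1})\,dm=0$, forcing $h^\nu=0$ $m$-a.e.\ on $\{P^\nu_t\mathbf{1}<1\}$ for every $t>0$. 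By Lemma \ref{lm4.2}, applicable because $\nu$ is nontrivial and $(T_t)_{t\ge 0}$ is irreducible via Lemma \ref{lm4.1} and strict positivity of $G$, every $x\in E$ satisfies $P_x(A^\nu_t>0)>0$ for all sufficiently large $t$, so $P^\nu_t\mathbf{1}(x)<1$ for such $t$. Monotonicity of $t\mapsto A^\nu_t$ then yields $\bigcup_{n\ge 1}\{P^\nu_n\mathbf{1}<1\}=E$ pointwise, and the countable union gives $h^\nu=0$ $m$-a.e., contradicting nontriviality of $u$.

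Once $\mu\ne 0$ is established, the conclusion is immediate: $R^\nu\mu\le\check u\in L^1(E;m)$ together with Fubini and symmetry of $G^\nu$ gives $\int R^\nu\mathbf{1}\,d\mu=\int R^\nu\mu\,dm<\infty$, whence $R^\nu\mathbf{1}<\infty$ $\mu$-a.e. Since $\mu$ is smooth it does not charge $N_\nu$ (which is polar, as pointed out in the introduction), so $\mu$ is carried by $E_\nu$ and one finds $x_0\in E_\nu$ with $R^\nu\mathbf{1}(x_0)=\int G^\nu(x_0,y)\,m(dy)<\infty$, as required.
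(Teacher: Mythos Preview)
Your argument is correct in substance but takes a different and more involved route than the paper, with a couple of soft spots worth flagging.

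For sufficiency the paper simply takes $u=G^\nu(x_0,\cdot)=R^\nu\delta_{x_0}$: membership in $L^1(E;m)$ is the hypothesis, and $\langle u,\nu\rangle=\langle\delta_{x_0},R^\nu\nu\rangle\le 1$ by the same inequality $R^\nu\nu\le 1$ that you invoke. Your construction via a sublevel set $V$ of $R^\nu\mathbf{1}$ works but is unnecessary.

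For necessity the paper bypasses your ``rule out the harmonic part'' step entirely. Knowing (from Proposition~\ref{prop5.1}) that $u$ is $(P^\nu_t)$-excessive, one fixes any bounded nontrivial measure $\mu$ on $E_\nu$ and observes that $u\wedge R^\nu\mu$ is $(P^\nu_t)$-excessive and \emph{dominated by the potential} $R^\nu\mu$; hence its Riesz decomposition has no harmonic part at all, i.e.\ $u\wedge R^\nu\mu=R^\nu\gamma$ for some nontrivial $\gamma$. Your Fubini finish then applies verbatim to $\gamma$. This device sidesteps two issues in your approach. First, you assert that the Riesz harmonic part $h^\nu$ satisfies $P^\nu_t h^\nu=h^\nu$, whereas the paper's notion of harmonic is $E^\nu_\cdot\, h^\nu(X_{\tau_K})=h^\nu$ for compacts $K$; the passage to invariance is standard but deserves a word. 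Second, your appeal to Lemma~\ref{lm4.2} to obtain $\bigcup_n\{P^\nu_n\mathbf{1}<1\}=E$ requires $\nu$ to be nontrivial, which the proposition does not assume; when $\nu=0$ and $(T_t)$ is conservative (e.g.\ Brownian motion on $\BR^d$, $d\ge 3$) the identity $\int h^\nu(1-P^\nu_t\mathbf{1})\,dm=0$ is vacuous and a separate argument would be needed to exclude nontrivial invariant $L^1$ functions. The paper's $u\wedge R^\nu\mu$ trick works uniformly in $\nu$. Finally, the Riesz measure in your decomposition is a priori only Borel, not smooth; what you actually need---and what holds, since $R^\nu\mu$ is finite a.e.---is that it does not charge the polar set $N_\nu$.
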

\begin{proof}
Suppose that $u\in L^1(E;m)\cap L^1(E;\nu)$ is the asserted function. 
By  the reasoning following (\ref{eq.mmqq}),
there exists a function $\check u$ that is  excessive  with respect to $(P^\nu_t)_{t\ge 0}$
and equals $u$ q.e.
Let $\mu$ be a bounded positive non-trivial measure on $E_\nu$.
Then $R^\nu\mu$ is  an excessive function with respect to
$(P^\nu_t)_{t\ge 0}$ which is finite a.e. (see \eqref{eq.acs0}, \eqref{eq.acs6}). Therefore $u\wedge
R^\nu\mu$ shares the same properties. By \cite[Proposition 3.9]{GetoorGlover},
there exists a positive Borel measure $\gamma$ on $E_\nu$ such
that $u\wedge R^\nu\mu=R^\nu\gamma$. Of course, $\gamma$ is
non-trivial, and   $R^\nu\gamma\in L^1(E;m)$. Hence 
\[
\langle \gamma,R^\nu 1\rangle=\langle R^\nu\gamma,1\rangle=\|R^\nu\gamma\|_{L^1(E;m)}<\infty.
\]
Thus, $R^\nu1<\infty$ $\gamma$-a.e. Since $\gamma$ is non-trivial, there exists $x_0\in E_\nu$ such that 
\[
(R^\nu1)(x_0)=\int_E G^\nu(x_0,y)\,m(dy)<\infty.
\]
Now suppose that there exists $x_0\in E_\nu$ such that $G^\nu(x_0,\cdot)\in L^1(E;m)$.
Then $u:= G^\nu(x_0,\cdot)\in L^1(E;m)\cap L^1(E;\nu)$ and  $u$ satisfies (\ref{eq5.1a}). Indeed, it is clear that $u$ is an excessive function
with respect to $(P^\nu_t)_{t\ge 0}$, so (\ref{eq5.1a}) is satisfied.  Furthermore, by (\ref{eq4.2}), $R^\nu\nu\le 1$, so $
\langle u,\nu\rangle=\langle R^\nu\delta_{x_0},\nu\rangle=\langle \delta_{x_0},R^\nu\nu\rangle\le 1$.
\end{proof}

In the reminder of this section, we assume that $G^\nu(x_0,\cdot)\in L^1(E;m)$ for some $x_0\in E_\nu$. This assumption is satisfied for instance  if $m(E)<\infty$ or $R1$ is bounded.

\begin{theorem}
\label{th5.1} Let $\mathcal C$ be a set of test functions for $A$.  Assume that   $(T_t)_{t\ge 0}$ is irreducible. Let  
$u\in L^1(E;m)\cap L^1(E;\nu)$ be a positive function satisfying 
\begin{equation}
\label{eq5.2}
\langle u,-A\eta\rangle+\langle u\cdot\nu,\tilde\eta\rangle\ge 0,\quad \eta\in \mathcal C.
\end{equation}
 Suppose that  either $\nu\ll m$ or $u$ is quasi-continuous.
Then 
there exists an $m$-version $\check u$
of $u$ which is finely-continuous on $E_\nu$. Moreover, if $\check u(x)=0$ for some $x\in E\setminus N_\nu$, then $\check u\equiv0$.
\end{theorem}
\begin{proof}
Let $\check u$ be the function constructed in the  proof of Theorem \ref{prop5.1}. It is  finely-continuous on $E_\nu$ and $\check u(x)=0$, $x\in N_\nu$.  By Theorem \ref{prop5.1}, for any $k\ge 0$ and  relatively compact open set $D\subset E$,
\begin{equation}
\label{eq5.7adcf}
\check u_k(x)=\mathbb E_xe^{-A^\nu_{t\wedge \tau_D}}\check u_k(X_{t\wedge \tau_D})+\mathbb E_x\int_0^{t\wedge \tau_D}e^{-A^\nu_r}\,dA^{\beta_k}_r,\quad t\ge 0,
\end{equation}
for every $x\in D$, where $\beta_k$ is a positive smooth measure and $\check u_k=\check u\wedge k$. From (\ref{eq5.7adcf}) it follows that
\begin{equation}
\label{eq5.8}
\check u_k(x)\ge \mathbb E_x\int_0^{\zeta}e^{-A^\nu_r}\,dA^{\beta_k}_r,\quad x\in E_\nu.
\end{equation}
Assume  that $\check u(x)=0$ for some $x\in E_\nu$.  By Proposition \ref{prop4.1} and (\ref{eq5.8}), $\beta_k= 0$. 
Therefore, letting $t\to \infty$ in  (\ref{eq5.7adcf}), we get
\begin{equation}
\label{eq5.9}
\check u_k(x)=\mathbb E_xe^{-A^\nu_{\tau_{D}}}\check u_k(X_{\tau_{D}}),\quad t\ge 0,\quad x\in D.
\end{equation}
Let $\{D_n\}$ be an increasing sequence of relatively compact open subsets of $E$ such that $\bigcup_{n\ge 1}D_n=E$.
Applying strong Markov property to  (\ref{eq5.9}) we find that for any $n\ge 1$, and any stopping time $\alpha$,
\[
e^{-A^\nu_\alpha}\check u_k(X_\alpha)\mathbf1_{\{\alpha<\tau_{D_n}\}}=\mathbb E_x(e^{-A^\nu_{\tau_D}}\check u_k(X_{\tau_{D_n}})\mathbf1_{\{\alpha<\tau_{D_n}\}}|\FF_\alpha)\quad\mbox{a.s.}
\]
From this and the choice of sequence $\{D_n\}$, we infer that  for any bounded stopping time $\alpha$, 
$\mathbb E_x e^{-A^\nu_\alpha}\check u_k(X_\alpha)=\check u(x),\, x\in E$. As a result,
$(\check u_k(X_t)e^{-A^\nu_t})_{t\ge 0}$ is a  martingale under the measure
$P_x$   for any  $x\in E$. Therefore, if $\check u(x)=0$ for some $x\in E_\nu$, then $e^{-A^\nu_t}\check u_k(X_t)=0,\, t\ge 0,\, P_x$-a.s. Since $x\in E_\nu$, we have that $e^{-A^\nu_t}>0,\, t<\zeta,\, P_x$-a.s. Therefore,  $\check u_k(X_t)=0,\, t\ge 0,\, P_x$-a.s. Consequently,
\[
0=\mathbb E_x\int_0^\zeta \check u_k (X_r)\,dr=\int_E G(x,y)\check u(y)\,dy.
\]
Since $(T_t)_{t\ge 0}$ is irreducible, $G(x,\cdot)$ is strictly positive (see Lemma \ref{lm4.1}). Thus, $\check u=0\,\,\, m$-a.e.
Since $\check u$ is finely-continuous on $E_\nu$, and $\mathbb X$ satisfies absolute continuity condition, we have $\check u=0$
on $E_\nu$  (see \eqref{eq.acs3}). By the very definition of  $\check u=0$, we have  $\check u(x)=0,\, x\in N_\nu$.
\end{proof}

\begin{example}
\label{examp.1}
Observe that if $\nu\ll m$ does not hold, then, in general,  the conclusion of Theorem \ref{th5.1}
does not hold without assumption that $u$ be quasi-continuous. Indeed, let
\[
w(x):=1-(2-|x|)^+\wedge 1,\quad x\in E:=(-2,2),\quad A:=\Delta,\quad \nu:=\delta_{\{-1\}}+\delta_{\{1\}}.
\]
One easily finds  that $-\Delta w=-\nu$, i.e. $\langle w,-\Delta\eta\rangle=-\int_E\eta\,d\nu,\, \eta\in C^\infty_c(E)$. Set
\[
u(x):=w(x),\quad x\in E\setminus\{-1,1\},\quad u(-1)=u(1):=1.
\]
Then, $-\Delta u+u\cdot\nu\ge 0$, i.e. $\langle u,-\Delta\eta\rangle+\int_Eu\eta\,d\nu\ge 0,\, \eta\in C^\infty_c(E)$.
Since the Green function for $-\Delta$ on $(-2,2)$ is bounded, we have $N_\nu=\emptyset.$
We see that $u(x)=0, x\in [-1,1]$ but $u\nequiv 0$ in $E_\nu=E$. This is so because $u$ is not continuous on $E$,
which in our case, i.e.  one dimension and $A=\Delta$, is equivalent to quasi-continuity of $u$.
\end{example}

\begin{corollary}
\label{cor5.1g} Let the assumptions of Theorem \ref{th5.1} hold with quasi-continuous $u$ (if $u$ is not assumed to be quasi-continuous in  Theorem \ref{th5.1}, then by its assertion we know that such version exists). If
$Cap_A(\{u=0\})>0$, then $u=0$ q.e.
\end{corollary}
\begin{proof}
Since $u$ is quasi-continuous, $\check u= u$ q.e. (see \cite[Theorem 2.1.2]{FOT}).  Hence
$Cap_A(\{\check u=0\})>0$. From this and the fact that $Cap_A(N_\nu)=0$ we conclude that there exists $x\in E_\nu$
such that $\check u(x)=0$. Therefore, by Theorem \ref{th5.1}, $\check
u=0$, so $u=0$ q.e.
\end{proof}

\begin{remark}
The above corollary, in case $A=\sum_{i,j}\frac{\partial}{\partial x_j}(a_{ij}\frac{\partial }{\partial x_i})$ is a symmetric uniformly elliptic operator  on a bounded domain, was proved by Ancona \cite[Theorem 8]{Ancona}.
In fact, Ancona needed some additional regularity assumptions on  $u$ or coefficients $a_{ij}$ (either $u\in H^1_{loc}(D)$ or $a_{i,j}$ are smooth and $Au$ is a measure).
Note, however, that  thanks to these additional conditions the author dispensed with the assumption that $u\in L^1(E;\nu)$.
\end{remark}

We close this section with the result  saying that  $N_\nu$ is the set of all possible zeros of positive non-trivial solutions to (\ref{eq5.2}).

\begin{theorem}
Assume that $(T_t)_{t\ge 0}$ is irreducible and  $\nu$ is a positive smooth measure. Then
\[
N_\nu= \bigcup_{u\in \mathcal A} \{u=0\},
\]
where
$\mathcal A =\{u\in L^1(E;m)\cap L^1(E;\nu):u\ge 0$, $u$ satisfies \mbox{\rm(\ref{eq5.2})}, $u$ is finely-continuous and $u\nequiv 0$\}.
\end{theorem}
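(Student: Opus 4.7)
The plan is to prove the two inclusions $\bigcup_{u\in\mathcal A}\{u=0\}\subseteq N_\nu$ and $N_\nu\subseteq\bigcup_{u\in\mathcal A}\{u=0\}$ separately.

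For the first inclusion I would argue by contradiction via Theorem \ref{th5.1}. Take $u\in\mathcal A$ with $u(x_0)=0$ for some $x_0\in E$. Fine-continuity together with finiteness q.e.\ (inherited from $u\in L^1(E;m)$ and the fact that finely-continuous excessive-type functions are finite q.e.) yields quasi-continuity via \cite[Theorem 4.6.1]{FOT}. Hence Theorem \ref{th5.1} applies and produces an $m$-version $\check u$ of $u$ which is finely-continuous on $E_\nu$; since two finely-continuous functions that coincide $m$-a.e.\ coincide pointwise (see the observation in Section \ref{sec2.2}), we have $\check u=u$ on $E_\nu$. If we had $x_0\in E_\nu$, then $\check u(x_0)=0$, which by Theorem \ref{th5.1} forces $\check u\equiv 0$, hence $u=0$ as an element of $L^1(E;m)$---contradicting $u\neq 0$. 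Therefore $x_0\in N_\nu$.

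For the reverse inclusion the plan is to exhibit one function $u^*\in\mathcal A$ whose zero set contains (in fact, equals) $N_\nu$. Using the standing assumption of Section \ref{Sec6}, choose $y_0\in E_\nu$ with $G^\nu(y_0,\cdot)\in L^1(E;m)$ and set
\[
u^*(x):=G^\nu(y_0,x)\quad\mbox{for }x\in E_\nu,\qquad u^*(x):=0\quad\mbox{for }x\in N_\nu.
\]
Membership $u^*\in L^1(E;\nu)$ is an immediate Feynman--Kac computation,
\[
\int_{E_\nu}G^\nu(y_0,x)\,\nu(dx)=R^\nu\nu(y_0)=E_{y_0}\int_0^\zeta e^{-A^\nu_r}\,dA^\nu_r=E_{y_0}\bigl(1-e^{-A^\nu_\zeta}\bigr)\le 1.
\]
The variational inequality (\ref{eq5.2}) for $u^*$ is exactly the sufficiency part of Proposition \ref{prop5.9wd4}: $u^*$ is $(P^\nu_t)$-excessive, which after unwinding yields (\ref{eq5.2}). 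Fine-continuity of $u^*$ on $E_\nu$ is inherited from its $(P^\nu_t)$-excessiveness, exactly as in Step 2 of the proof of Proposition \ref{prop5.1}. Non-triviality $u^*\not\equiv 0$ follows from strict positivity of $G^\nu$ on $E_\nu\times E_\nu$, itself a consequence of Lemma \ref{lm4.1} combined with the resolvent identity (\ref{eq4.1}) (the same Chapman--Kolmogorov argument that proves Lemma \ref{lm4.1} applies to $G^\nu$). Finally, for every $x_0\in N_\nu$ Theorem \ref{prop4.1pw}(iv) gives $P_{x_0}(A^\nu_t=\infty,\,t>0)=1$, so the representation (\ref{eq4.s21}) forces $u^*(x_0)=0$. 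Hence $u^*\in\mathcal A$ and $N_\nu\subseteq\{u^*=0\}$.

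The main delicate point is the rigorous justification of (\ref{eq5.2}) for $u^*=G^\nu(y_0,\cdot)$, which is a Green-type singular object lying neither in $D(A)$ nor in $D(\mathcal E^\nu)$. The natural remedy---implicit in the proof of Proposition \ref{prop5.9wd4}---is to approximate $\delta_{y_0}$ by a monotone sequence $\mu_n$ of smooth measures with bounded potentials $R\mu_n$, so that $u_n:=R^\nu\mu_n$ satisfies $\langle u_n,-A\eta\rangle+\langle u_n\cdot\nu,\eta\rangle=\langle\mu_n,\eta\rangle\ge 0$ for every $\eta\in F$, and then to pass to the limit $n\to\infty$ via monotone convergence together with (\ref{eq4.2}). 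A subsidiary point is the reading of ``finely-continuous'' at points of $N_\nu$, which is harmless since $\mathrm{Cap}(N_\nu)=0$ and the construction extends $u^*$ canonically by $0$ on $N_\nu$, matching the convention already adopted in Step 2 of the proof of Proposition \ref{prop5.1}.
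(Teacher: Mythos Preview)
Your argument for the first inclusion $\bigcup_{u\in\mathcal A}\{u=0\}\subset N_\nu$ is correct and matches the paper's reasoning.

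For the reverse inclusion, however, there is a genuine gap. The class $\mathcal A$ requires $u$ to be finely-continuous on \emph{all} of $E$, and this is a pointwise condition that is not waived on sets of zero capacity. Your candidate $u^*=G^\nu(y_0,\cdot)$, extended by $0$ on $N_\nu$, is finely-continuous on $E_\nu$ (via $(P^\nu_t)$-excessiveness, as you note), but you have not established fine-continuity at points of $N_\nu$. Dismissing this as ``harmless since $\mbox{Cap}(N_\nu)=0$'' is not enough: the convention in Step~2 of Proposition~\ref{prop5.1} merely \emph{defines} $\check u=0$ on $N_\nu$ without claiming fine-continuity there. Concretely, writing $u^*=G(y_0,\cdot)-R(u^*\cdot\nu)$ on $E_\nu$, one finds that the natural finely-continuous extension at $x\in N_\nu$ equals $\lim_{t\downarrow 0}P_tu^*(x)$, and there is no reason offered why this limit should vanish. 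If it does not, then the finely-continuous version of $u^*$ is strictly positive at some points of $N_\nu$, and its zero set no longer contains $N_\nu$.

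The paper avoids this difficulty by a more careful construction: starting from $G^\nu(x_0,\cdot)\wedge 1=R^\nu\gamma$, it passes to $u=R^\nu(\gamma_1\wedge\eta)$ with $\gamma_1=R^\nu_1\gamma$ and $\eta$ a strictly positive bounded function with $R\eta$ bounded. The point of taking $\gamma_1\wedge\eta$ is precisely that $R(\gamma_1\wedge\eta)$ is \emph{bounded}; then the identity $u=R(\gamma_1\wedge\eta)-R(u\cdot\nu)$ exhibits $u$ as the difference of two bounded excessive functions, each finely-continuous everywhere on $E$, so $u$ itself is finely-continuous everywhere (this is the ``reasoning following (\ref{eq5.8})'' invoked in the paper). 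Your $u^*=R^\nu\delta_{y_0}$ lacks this structure because $R\delta_{y_0}=G(y_0,\cdot)$ is unbounded. To repair your argument you would either have to prove directly that $\lim_{t\downarrow 0}P_tG^\nu(y_0,\cdot)(x)=0$ for every $x\in N_\nu$, or replace $u^*$ by a regularized potential as the paper does.
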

\begin{proof}
Let $u\in\mathcal A$. Then  $u(x)>0$ for some $x\in E$.
Since $u$ is finely-continuous, there exists a
finely-open neighborhood $V_x$ of $x$ such that $u(y)>0$ for $y\in
V_x$. Since $V_x$ is finely-open,  $Cap_A(V_x)>0$ (see the comments on polar sets in Section \ref{sec2.2}), so
there exists $y_0\in E_\nu$ such that $u(y_0)>0$. Therefore, by
Theorem \ref{th5.1}, $\{u=0\}\subset N_\nu$. Consequently, $ \bigcup_{u\in
\mathcal A} \{u=0\}\subset N_\nu$. To prove the  opposite inclusion, we
use the assumption that $G^\nu(x_0,\cdot)\in L^1(E;m)$ for some
$x_0\in E_\nu$. In the proof of Proposition \ref{prop5.9wd4} we
have shown that $G^\nu(x_0,\cdot)\in L^1(E;\nu)$. By the very definition of the Green function, 
$G^\nu(x_0,\cdot)$ is  excessive  with respect to
$(P^\nu_t)_{t\ge 0}$. Let $v:= G^\nu(x_0,\cdot)\wedge 1$. Of course
$v\in L^1(E;m)\cap L^1(E;\nu)$ and $v$ is excessive with respect to
$(P^\nu_t)_{t\ge 0}$.  By \cite[Proposition 3.9]{GetoorGlover}, there exists a
positive Borel measure $\gamma$ such that $v=R^\nu\gamma$. Let
$\gamma_1:= R^\nu_1\gamma$.  Then $R^\nu\gamma_1\le v$. Let
$\eta$ be a strictly positive bounded function on $E$ such that
$R\eta$ is bounded (see \eqref{eq.acs5}), and let
\[
u(x):= \mathbb E_x\int_0^\zeta (\gamma_1\wedge \eta)(X_r) e^{-A^\nu_r}\,dr,\quad x\in E.
\]
Observe that $u=R^\nu(\gamma_1\wedge\eta)\le R^\nu\gamma_1\le v\in  L^1(E;m)\cap L^1(E;\nu)$ (see \eqref{eq4.s21}).
Since $u$ is excessive with respect to $(P^\nu_t)_{t\ge 0}$ (see \eqref{eq.acs0}), it satisfies (\ref{eq5.2}). 
By Proposition  \ref{prop4.1pw}(iii),  $u$ is finely-continuous. Thus $u\in \mathcal A$. It is clear that $\{u=0\}=N_\nu$, which proves that $N_\nu\subset \bigcup_{u\in \mathcal A} \{u=0\}$.
\end{proof}

\section{Strong maximum principle}
\label{sec6}

In this section we are concerned with the strong maximum principle (SMP)
for operators of the form $-A+V$, where $V:E\to [0,+\infty]$ is assumed to be merely a  Borel measurable function
(no additional assumptions). In consequence $\nu:=V\cdot m$ is not, in general, a smooth measure, and so
the results of the previous sections cannot be directly applied. 
Nevertheless, $N_{V\cdot m}$ is still well defined.  Moreover, 
\begin{equation}
\label{eq.nvvn}
N_{V\cdot m}=\{x\in E: P_x(\int_0^tV(X_r)\,dr=\infty,\, t>0)=1\}.
\end{equation}
The above equation one easily gets repeating the proof of \cite[Proposition 3.2]{K:NA1} with $\nu_n:= (V\wedge n)\cdot m$.

Fix a set $\mathcal C$ of {\em test functions} for $A$. 
We say that SMP holds for an operator $-A+V$ if  for any finely-continuous  positive function $u\in L^1_{loc}(E;m)\cap L^1(E;V\cdot m)$ 
that satisfies \eqref{eq5.1a} with $\nu=V\cdot m$, we have the following  implication: if $u(x_0)=0$ for some $x_0\in E$, then $u(x)=0,\, x\in E$.

\begin{theorem}
\label{th.iffff}
Let $V:E\to [0,+\infty]$ be a Borel measurable function.
Assume that   $(T_t)_{t\ge 0}$ is irreducible. Then the strong maximum principle holds for $-A+V$
if and only if 
\begin{enumerate}
\item[(1)] either $N_{V\cdot m}=\emptyset$
\item[(2)] or $N_{V\cdot m}\neq\emptyset$ and there is no non-trivial positive finely-continuous solution to \eqref{eq5.1a} with $\nu=V\cdot m$.
\end{enumerate}
\end{theorem}
\begin{proof}
Set $\nu:=V\cdot m$.
That  (1)  implies SMP follows from Theorem \ref{th5.1}. The implication (2)$\Rightarrow$SMP is trivial.
Now assume that SMP holds and $x_0\in N_\nu\neq\emptyset$. Let $D$ be an open relatively compact subset of $E$
such that $x_0\in D$. Let $f$ be a positive  finely-continuous solution to \eqref{eq5.1a}.
Repeating  the argument of the proof of Theorem \ref{prop5.1}  that led to \eqref{eqmm0},
we  conclude \eqref{eqmm0} with $\hat u$ replaced by $f$ and  $\int_0^t \hat u(X_r)\,dA^\nu_r$
replaced by $A^{f\cdot\nu}_t$. Observe that although $\nu$ is not smooth, we know that $f\cdot \nu$ is smooth, as it is a bounded measure,
thus, $A^{f\cdot\nu}$  is well defined. Now, repeating the argument of Step 3. of the proof of Theorem \ref{prop5.1}
that led to \eqref{eq5.5}, we find that 
\begin{align}
\label{eq5.4wf}
f_k(X_t)=f_k(X_0)+A^{f_k\cdot \nu}_t-A^{\beta_k}_t+L^k_t,\quad t\ge 0\,\,\,\mbox{q.a.s.},
\end{align}
for a local martingale $L^k$ and  a positive smooth  Green bounded measure $\beta_k$,
where $f_k:= f\wedge k$.
Let $\nu_l:=(V\wedge l)\cdot m$ (clearly it is a smooth measure), and let 
$\{\tau_n\}$  be a fundamental sequence for the local martingale $L^k$.
Applying integration by parts formula to $e^{-A^{\nu_l}_t}f_k(X_t)$ we find that 
for any stopping time $\alpha$ the following equation holds in $D$,
\begin{align*}
f_k(x)=\mathbb E_x\Big[e^{-A^{\nu_l}_{\alpha\wedge\tau_D\wedge\tau_n}}f_k(X_{\alpha\wedge\tau_D\wedge\tau_n})+\int_0^{\alpha\wedge\tau_D\wedge\tau_n}e^{-A^{\nu_l}_r}\,dA^{\beta_k}_r+A^{f_k\cdot\nu_l}_{\alpha\wedge\tau_D\wedge\tau_n}-A^{f_k\cdot\nu}_{\alpha\wedge\tau_D\wedge\tau_n}\Big]\quad\text{q.e.}
\end{align*}
Letting $l\to\infty$ and then $n\to\infty$ yields 
\begin{align}
\label{eq.letlet}
f_k(x)=\mathbb E_x\Big[e^{-A_{\alpha\wedge\tau_D}}f_k(X_{\alpha\wedge\tau_D})+\int_0^{\alpha\wedge\tau_D}e^{-A_r}\,dA^{\beta_k}_r\Big] \quad \text{q.e.  in  }D,
\end{align}
where $A_t:=\int_0^t V(X_r)\,dr,\, t\ge 0$. Let $w(x)$ denote the right-hand side of \eqref{eq.letlet} and let $w_l(x)$
denote the right-hand side of \eqref{eq.letlet} but with $A$ replaced by $A^{\nu_l}$. By Step 5. of the proof of Theorem \ref{prop5.1}
$w_l$ is finely-continuous. Clearly, $w=\inf_{l\ge 1}w_l$. Therefore, since  $f_k$ is assumed to by finely-continuous, we conclude
from \eqref{eq.letlet} that 
\begin{align}
\label{eq.letlet1}
f_k(x)\le \mathbb E_x\Big[e^{-A_{\alpha\wedge\tau_D}}f_k(X_{\alpha\wedge\tau_D})+\int_0^{\alpha\wedge\tau_D}e^{-A_r}\,dA^{\beta_k}_r\Big], \quad x\in D.
\end{align}
From this and  \eqref{eq.nvvn}, we deduce that $u(x_0)=0$. Thus, by SMP, $u(x)=0,\, x\in E$.
\end{proof}

\begin{remark}
\label{rem.bst}
The conclusion of Theorem \ref{th.iffff}, in case $A=\Delta$ and $d=1$, follows from \cite{BST},
where the authors  went a step further and give a necessary and sufficient condition on $V$ guaranteeing (2) of Theorem \ref{th.iffff}.
(recall here that in  case $A=\Delta$ and $d=1$, fine topology and Euclidean topology agree).
\end{remark}

\section{Schr\"odinger equations with $L^p$ potentials }
\label{sec7}

For  $p> 1$ and $A\subset E$, we set
\[
C_p(A)=\inf\{\|f\|^p_{L^p}: f\ge 0,\, f\in L^p(E;m),\, Rf\ge \mathbf{1}_A\}.
\]
We also let
\[
C_1(A)=\inf\{\|\mu\|_{TV}: \mu \in\MM^+(E),\, R\mu\ge \mathbf{1}_A\}
\]
for any $A\subset E$.
The above set functions are called  Riesz capacities.
For any $p\ge 1$ and $A\in \BB^n(E)$ we also set
\[
c_p(A)= \sup\{\|\mu\|_{TV}: \mu\in\MM^+_b(E),\, \mu(A^c)=0,\, \|R\mu\|_{L^{p'}}\le 1\}.
\]
Observe   that $c_1(A)\le C_1(A)$
for every $A\in\BB^n(E)$. Indeed, let $\mu\in\MM^+(E)$ be such that $\mu(A^c)=0$ and $R\mu\le 1$, and let $\nu\in\MM^+_b(E)$
be such that $R\nu\ge \mathbf{1}_{A}$. Then
\[
\|\mu\|_{TV}=\mu(A)\le \langle R\nu,\mu\rangle=\langle \nu,R\mu\rangle\le \|\nu\|_{TV},
\]
from which the desired result easily follows. By \cite[Exercise 2.2.2]{FOT},
for every compact $K\subset E$,
\[
Cap_A(K)=c_1(K).
\]
Since $Cap_A$ is a Choquet capacity, we conclude from the
above equality that
\begin{equation}
\label{eq6.1} Cap_A\ll c_1\le C_1.
\end{equation}

\begin{lemma}
\label{lm6.1}
If $f\in L^p(E;m)$ for some $p\ge 1$, then $f\cdot m$ is a smooth measure.
\end{lemma}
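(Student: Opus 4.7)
The plan is to verify the two defining conditions of a smooth measure for the positive variation $|f|\cdot m$; by the paper's convention for signed smooth measures this reduces the problem to the case $f\ge 0$.

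For condition (a), $f\cdot m\ll\mathrm{Cap}$, it suffices to check that every set $N\in\BB(E)$ with $\mathrm{Cap}(N)=0$ is $m$-null. Such an $N$ is polar (by \cite[Theorems 4.1.2, 4.2.1]{FOT}), so $R\mathbf{1}_N(x)=E_x\int_0^\zeta\mathbf{1}_N(X_t)\,dt=\int_E G(x,y)\mathbf{1}_N(y)\,m(dy)=0$ for every $x\in E$, and the standing strict positivity $G(x,y)>0$ then forces $m(N)=0$ and therefore $(|f|\cdot m)(N)=0$.

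For condition (b), I would construct $\eta$ concretely. Using $\sigma$-finiteness of $m$, pick compact $K_n\nearrow E$ with $m(K_n)<\infty$ and set $h:=\sum_{n\ge 1}2^{-n}(1+m(K_n))^{-1}\mathbf{1}_{K_n\setminus K_{n-1}}\in L^1_+(E;m)\cap L^\infty_+(E;m)$, strictly positive. Define $\eta:=R_1 h$. The plan is then to verify four properties, all of which I expect to be routine: $\eta$ is bounded, since the Markov property gives $R_1\mathbf{1}\le 1$ and hence $\eta\le\|h\|_\infty$; $\eta$ lies in every $L^q(E;m)$, since by symmetry $\|\eta\|_{L^1}=\int h\cdot R_1\mathbf{1}\,dm\le\|h\|_{L^1}$ and interpolation against the $L^\infty$ bound covers all $q\in[1,\infty]$; $\eta$ is strictly positive, because $\eta(x)=\int G_1(x,y)h(y)\,m(dy)$ and $G_1=\int_0^\infty e^{-t}p(t,\cdot,\cdot)\,dt$ inherits strict positivity from $G>0$; and $\eta$ is quasi-continuous, because it is $1$-excessive, hence finely continuous, and finite everywhere, so \cite[Theorem 4.6.1]{FOT} (cited in Section 2.2) applies. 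Hölder's inequality then yields
\[
\langle |f|\cdot m,\eta\rangle=\int_E|f|\eta\,dm\le\|\eta\|_{L^{p'}(E;m)}\|f\|_{L^p(E;m)}<\infty,
\]
which is condition (b).

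The only slightly delicate step is the quasi-continuity of $\eta$. If one prefers to avoid the finely-continuous-implies-quasi-continuous route, a painless alternative is to arrange $h\in L^2_+(E;m)$ as well (for instance by thinning $h$ against the transience reference function $g$ on sets of large $m$-measure); then $\eta=R_1 h\in D(\EE)$ automatically admits a quasi-continuous $m$-version, which must agree pointwise with $\eta$ since both are $1$-excessive.
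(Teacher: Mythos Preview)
Your argument is correct and follows essentially the same route as the paper's: take a strictly positive function, apply $R_1$ to obtain a strictly positive finely-continuous (hence quasi-continuous) $\eta$, and conclude by H\"older. The paper's version is slightly slicker in that it picks $g\in L^{p'}(E;m)$ strictly positive at the outset and uses contractivity of $R_1$ on $L^{p'}$ to get $\eta=R_1g\in L^{p'}$ directly, rather than building $h\in L^1\cap L^\infty$ and interpolating; the paper also silently omits the verification of condition~(a), which you supply.
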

\begin{proof}
Let $g$ be a strictly positive function in $L^{p'}(E;m)$. Then $R_1g\in L^{p'}(E;m)$. Of course, $R_1 g$ is finely-continuous (see \eqref{eq.acs0}) and $\langle R_1 g,f\rangle<\infty$.
\end{proof}

\begin{theorem}
\label{th6.1} Let $\mathcal C$ be a set of test functions for $A$.  Let  $(T_t)_{t\ge 0}$ be irreducible and  $V$ be a
positive Borel function such that $V\in L^p(E;m)$ for some $p\ge
1$. Assume that $u\in L^1(E;m)\cap L^1(E;V\cdot m)$ is  a positive
function such that
\begin{equation}
\label{eq6.66}
\langle u,-A\eta\rangle+\langle u\cdot V,\eta\rangle\ge 0,\quad \eta\in \mathcal C.
\end{equation}
Then there exists an $m$-version $\check u$
of $u$ which is   finely-continuous on $E_\nu$. Moreover, 
if $C_p(\{\check u=0\})>0$, then $\check u\equiv0$.
\end{theorem}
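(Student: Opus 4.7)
The plan is to reduce directly to the strong maximum principle (Theorem \ref{th5.1}) by showing $C_p(N_{V\cdot m})=0$; then the hypothesis $C_p(\{\check u=0\})>0$ forces a zero of $\check u$ to lie outside $N_{V\cdot m}$, which by Theorem \ref{th5.1} gives $\check u\equiv 0$. By Lemma \ref{lm6.1}, $\nu:=V\cdot m$ is smooth, so Proposition \ref{prop5.1} supplies the finely continuous $m$-version $\check u$ on $E_\nu$ to which Theorem \ref{th5.1} applies (assuming, as implicit in the appearance of $\check u$ in the statement, that $u$ is taken quasi-continuous).

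The first ingredient is the inclusion $N_\nu\subset\{x\in E:R\nu(x)=\infty\}$. This is immediate from the definition of $E_\nu$: the whole set $E$ is a finely open neighborhood of each of its points (the fine topology is finer than the usual one), so $R\nu(x)=\int_E G(x,y)V(y)\,dm(y)<\infty$ already certifies $x\in E_\nu$ via the choice $V_x:=E$; contrapositively, $x\in N_\nu$ forces $RV(x)=\infty$.

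The second ingredient is a scaling estimate $C_p(\{RV=\infty\})=0$. For any $\varepsilon>0$, the function $\varepsilon V$ (respectively the measure $\varepsilon V\cdot m$, in the case $p=1$) is admissible in the definition of the Riesz capacity of $\{RV=\infty\}$, since $R(\varepsilon V)=\varepsilon\cdot RV\ge \mathbf{1}_{\{RV=\infty\}}$ wherever $RV=\infty$. Hence $C_p(\{RV=\infty\})\le \varepsilon^p\|V\|_{L^p}^p$ for $p>1$ (or $\le \varepsilon\|V\|_{L^1}$ for $p=1$), and letting $\varepsilon\to 0$ yields the claim. Monotonicity of $C_p$, which is evident from its infimum definition, then gives $C_p(N_\nu)\le C_p(\{RV=\infty\})=0$.

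Combining these: since $C_p(\{\check u=0\})>0=C_p(N_\nu)$, the set $\{\check u=0\}$ cannot be contained in $N_\nu$, so there exists $x\in E_\nu$ with $\check u(x)=0$, and Theorem \ref{th5.1} delivers $\check u=0$. I do not anticipate any serious obstacle; the only point requiring care is handling the two cases $p>1$ and $p=1$ uniformly, as $C_1$ is defined via measures rather than functions, but the linear scaling argument works identically in both. The conceptual core is simply the observation that for an $L^p$ potential, $N_\nu$ sits inside the polar-like set $\{RV=\infty\}$, which has vanishing Riesz $p$-capacity by scaling.
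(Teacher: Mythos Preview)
Your proof is correct and follows essentially the same route as the paper's: both observe $N_{V\cdot m}\subset\{RV=\infty\}$, establish $C_p(\{RV=\infty\})=0$, and then invoke Theorem \ref{th5.1}. The only cosmetic difference is that where the paper cites \cite[Theorem 3]{Meyers} for $C_p(\{RV=\infty\})=0$, you supply the direct scaling argument $f=\varepsilon V$ (respectively $\mu=\varepsilon V\cdot m$ for $p=1$), which is in fact the elementary content of that reference in this setting and makes your argument pleasantly self-contained.
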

\begin{proof}
Let $\check u$ be the function constructed in the proof of Theorem \ref{prop5.1}. By Lemma \ref{lm6.1}, $\nu:=V\cdot m$ is a positive smooth measure.
Since $N_{\nu}\subset \{RV=\infty\}$, it follows from  \cite[Theorem 3]{Meyers} (see also the comments at the beginning of \cite[Section 3]{Meyers}) that $C_p(N_{\nu})=0$. Hence,  since $C_p(\{\check u=0\})>0$,  there is  $x\in E_{\nu}$ such that $\check u(x)=0$. By Theorem \ref{th5.1}, $\check u\equiv 0$.
\end{proof}

\begin{corollary}
\label{cor5.2g}
Let $(T_t)_{t\ge 0}$ be irreducible, $V$ be a positive function such that $V\in L^1(E;m)$ and $u$  be a positive function in $L^1(E;m) \cap L^1(E;V\cdot m)$
such that \mbox{\rm(\ref{eq6.66})} is satisfied.
If $Cap_A(\{\check u=0\})>0$, then $\check u\equiv 0$.
\end{corollary}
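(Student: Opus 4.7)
The plan is to reduce the corollary directly to Theorem~\ref{th6.1} applied with $p=1$. The hypotheses almost line up on the nose: by Lemma~\ref{lm6.1}, $V\in L^1(E;m)$ guarantees that $V\cdot m$ is a positive smooth measure, so the framework of Section~\ref{Sec6} applies, and (\ref{eq6.66}) is exactly the variational inequality needed in Theorem~\ref{th6.1}. The only mismatch is that Theorem~\ref{th6.1} requires the assumption $C_p(\{\check u=0\})>0$ for the Riesz capacity $C_p$, while our hypothesis is stated in terms of $\mbox{Cap}$.

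To bridge this gap I would invoke the comparison (\ref{eq6.1}), namely $\mbox{Cap}\ll c_1\le C_1$, which is established earlier in the section from the identification $\mbox{Cap}(K)=c_1(K)$ on compacts (via \cite[Exercise 2.2.2]{FOT}) together with the Choquet capacitability of $\mbox{Cap}$. Thus from $\mbox{Cap}(\{\check u=0\})>0$ one deduces $C_1(\{\check u=0\})>0$, at which point Theorem~\ref{th6.1} with $p=1$ yields $\check u=0$.

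The only step that is not completely mechanical is ensuring the absolute continuity $\mbox{Cap}\ll c_1$ can be invoked for the set $\{\check u=0\}$, which is a priori only nearly Borel; but since $\check u$ is finely continuous on $E_\nu$ and vanishes on $N_\nu$ (which is itself polar, hence of zero capacity in both senses), the level set is nearly Borel and the Choquet-type comparison in (\ref{eq6.1}) applies without modification. Once this routine point is addressed, the corollary follows in a single line from Theorem~\ref{th6.1}.
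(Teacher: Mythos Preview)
Your proposal is correct and follows essentially the same route as the paper: invoke the comparison (\ref{eq6.1}) to pass from $\mbox{Cap}(\{\check u=0\})>0$ to $C_1(\{\check u=0\})>0$, and then apply Theorem~\ref{th6.1} with $p=1$. The paper's proof is in fact just these two lines; your additional remarks about Lemma~\ref{lm6.1} and the nearly Borel nature of $\{\check u=0\}$ are extra justifications the paper leaves implicit.
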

\begin{proof}
Assume that $Cap_A(\{\check u=0\})>0$. Then, by
(\ref{eq6.1}), $C_1(\{\check u=0\})>0$, so  by Theorem
\ref{th6.1}, $\check u\equiv 0$.
\end{proof}

\section{The set of test functions and finely-continuous versions of supersolutions}
\label{sec8}
In the present section we are concerned with the structure of 
the set  of test functions $\mathcal C$ in \eqref{eq5.1a}.  Usually, in applications, we are interested in cases when
$\mathcal C= C_c^\infty(\BR^d)$ or $\mathcal C=C_b^\infty(\BR^d)$ are allowed. In the second part of the section
we will provide several remarks and simple results on finely-continuous versions of functions.
Especially, we focus on the Laplacian and the fractional Laplacian.
In these two particular cases one may provide  some special formulas for finely-continuous $m$-versions $\check u$.
We shall close this section with a result on equivalence between    Riesz's capacity for the fractional Laplacian and some    Sobolev capacity.

\subsection{The set of test functions}
We will start with the following useful result.

\begin{proposition}
\label{prop.sc1}
Assume that $u\in\mathfrak D_e(\EE)\cap L^1(E;\nu)$ and $\mathcal C(\EE)$ is a standard core for $\EE$:
 $\mathcal C(\EE)\subset  \mathfrak D_e(\EE)\cap C_c(E)$ is dense in $C_c(E)$ and $\mathfrak D_e(\EE)$,
 with standard norms, and $u\in \mathcal C(\EE)$ implies $\phi(u)\in\mathcal C(\EE)$ for any $1$-Lipschitz smooth function on $\BR$
 with $\phi(0)=0$ (see \cite[page 6]{FOT}).
 If 
 \begin{equation}
 \label{eq.fvi1}
 \EE(u,\eta)+\int_E\tilde u\eta\,d\nu\ge 0,\quad \eta\in\mathcal C(\EE),
 \end{equation}
 then the conclusion of Theorem \ref{th5.1} holds true, namely,  
there exists an $m$-version $\check u$
of $u$ which is finely-continuous on $E_\nu$, and if $\check u(x)=0$ for some $x\in E\setminus N_\nu$, then $\check u\equiv0$.

\end{proposition}
\begin{proof}
First note, that $u$ being in $\mathfrak D(\EE)$ implies the existence of  $\tilde u$ (quasi-continuous $m$-version of $u$).
Since $\mathcal C(\mathcal E)$ is a standard core, we get at once that \eqref{eq.fvi1} holds for any $\eta\in\mathcal U_b$.
Clearly, for $\eta\in \mathcal U_b$, $ \EE(u,\eta)=\langle u,-A\eta\rangle$.
Therefore, the assumptions of Theorem  \eqref{th5.1} are satisfied.
\end{proof}
In many interesting cases a standard core of $\EE$ may be taken to be equal $C^\infty_c(\BR^d)$ (see e.g.  \cite{AR}, \cite[Example 1.2.4, Exercise 3.1.1]{FOT}, \cite{SU}).

Now let us return   to the case  we have no information about the regularity of $u$.

For positive functions $u\in\BB^n(E)$, we let
\[
H_D(u)(x)=\mathbb E_x u(X_{\tau_D}),\quad x\in E.
\]
By \cite[Theorem 4.3.2]{FOT}, $\Pi_D(u):= u-H_D(u)$, $\Pi_D: (\mathfrak D_e(\EE),\EE)\to (\mathfrak D_e(\EE_{|D}),\EE)$ is the orthogonal projection onto $\mathfrak D_e(\EE_{|D})$. Observe also that for any positive $u\in \mathfrak D_e(\EE_{|D})$,
\begin{equation}
\label{eq.exhar}
P^D_t(H_D(u))(x)=\mathbb E_x[(\mathbb E_{X_t}u(X_{\tau_D}))\mathbf1_{\{t<\tau_D\}}]
=\mathbb E_x[u(X_{\tau_D})\mathbf1_{\{t<\tau_D\}}]\le H_D(u)(x),\quad x\in D.
\end{equation}

\begin{lemma}
\label{lm7.7.7} 
Let $D$ be a relatively compact open subset of $E$.
Let     $u\in \mathfrak D(A)$ and  $\eta\in \mathfrak D(A_{|D})$ be  positive.
Then
\begin{equation}
\label{eq7.bpe1} (-A u,\eta)_{L^2(D;m)} \le
(u,-A_{|D}\eta)_{L^2(D;m)}.
\end{equation}
Moreover, there exists $c>0$ depending only on
$\|H_D(u)\|_{L^1(D;m)}$ such that
\begin{equation}
\label{eq7.bpe2}
|(u,-A_{|D}\eta)_{L^2(D;m)}-
(-A u,\eta)_{L^2(D;m)}| \le c\|\eta\|_\infty.
\end{equation}
\end{lemma}
\begin{proof}
Let $(T^D_t)_{t\ge 0}$ be the semigroup generated by $A_{|D}$.  By Dynkin's formula (see \cite[(4.4.2)]{FOT}), $u-H_D(u)\in \mathfrak D(A_{|D})$. Therefore,
\[
(-A u,\eta)_{L^2(D;m)}=\EE(u,\eta)=\EE(u-H_D(u),\eta)= \EE_{|D}(u-H_D(u),\eta)
=(u-H_D(u),-A_{|D} \eta)_{L^2(D;m)}.
\]
Consequently,
\begin{equation}
\label{eq.fhf1}
(-A u,\eta)_{L^2(D;m)}-(u, -A_{|D} \eta)_{L^2(D;m)}=-(H_D(u), -A_{|D} \eta)_{L^2(D;m)}.
\end{equation}
Next,
\begin{align}
\label{eq.fhf}
\nonumber
(H_D(u), -A_{|D} \eta)_{L^2(D;m)}&=\lim_{t\rightarrow0^+}\frac 1t(H_D(u), \eta-T^D_t\eta)_{L^2(D;m)}\\
&=\lim_{t\rightarrow0^+}\frac 1t(H_D(u)-T^D_t H_D(u), \eta)_{L^2(D;m)}.
\end{align}
Inequality  (\ref{eq7.bpe2}) follows from   (\ref{eq.fhf1}), (\ref{eq.fhf}) and \cite[Proposition 4.4]{BC}.
Combining \eqref{eq.exhar} with
(\ref{eq.fhf1}) and (\ref{eq.fhf})  yields (\ref{eq7.bpe1}).
\end{proof}

Let $j_\varepsilon$ be a standard mollifier on $\BR^d$.  In the remainder of this subsection we assume that
$E=\BR^d$, $C_c^\infty(E)\subset \mathfrak D(A)$ and
\begin{equation}
\label{eq.splot1}
A(u*j_\varepsilon)=j_\varepsilon* (Au),\quad u\in C_c^\infty(E).
\end{equation}
One easily shows that the above condition fulfill  L\'evy operators, i.e. operators of the form
\begin{equation}
\label{eq.levy}
Au(x):= a\Delta u(x)+\int_{\BR^d}\big(u(x+y)-u(x)-\nabla u(x)\cdot y\mathbf1_{B(0,1)}(y)\big)\,\mu(dy),
\end{equation}
where $a\ge 0$  and $\mu$ is a positive Borel measure on $\BR^d$ such that 
\[
\int_{\BR^d}\min\{1,|y|^2\}\,\mu(dy)<\infty.
\]

\begin{lemma}
\label{lm7.1} Let $D$ be a bounded open subset of $\BR^d$, and  $\nu$
be a positive smooth measure on $D$. Let $u\in L^1(D;m)\cap
L^1(D;\nu)$ be a positive quasi-continuous function  such that
\begin{equation}
\label{eq7.1}
\langle u,-A \xi\rangle +\langle u\cdot \nu,\xi\rangle\ge 0,\quad \xi\in C_c^\infty(D),\, \xi\ge 0.
\end{equation}
Then
\begin{equation}
\label{eq7.1orew}
\langle u,-A_{|D} \eta\rangle +\langle u\cdot \nu,\eta\rangle\ge 0,\quad \eta\in \mathcal U_b(A_{|D}).
\end{equation}
\end{lemma}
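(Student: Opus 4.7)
The plan is to upgrade the test-function class in (\ref{eq7.1}) from $C_c^\infty(D)$ to $F=F((\Delta^\alpha)_{|D})$ by a density argument. The key first observation is that $C_c^\infty(D)\subset F$: for a positive $\xi\in C_c^\infty(D)$, extending by zero outside $D$ gives $\xi(X_{\tau_D})=0$ almost surely (either $X_{\tau_D}\in\partial D$ for $\alpha=1$, or $X_{\tau_D}\in D^c$ for $\alpha\in(0,1)$, and in either case $\xi$ vanishes there), so $H_D(\xi)=0$. Hence by \cite[Theorem 7.1]{Kwasnicki} (or equivalently by reading equality out of (\ref{eq7.bpe2}), whose constant $c$ vanishes when $H_D(\xi)=0$, combined with self-adjointness of $(\Delta^\alpha)_{|D}$) we have $-(\Delta^\alpha)_{|D}\xi=-\Delta^\alpha\xi$ on $D$, which is bounded; thus $\xi\in F$ and the hypothesis (\ref{eq7.1}) applied to $\xi$ is already (\ref{eq7.1orew}) for such $\xi$.

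To extend the inequality to a general positive $\eta\in F$, I would construct a sequence of positive $\xi_n\in C_c^\infty(D)$ with $\xi_n\to\eta$ pointwise $m$- and $\nu$-almost everywhere under a uniform sup bound, and with $(\Delta^\alpha)_{|D}\xi_n\to(\Delta^\alpha)_{|D}\eta$ pointwise $m$-almost everywhere, again under a uniform sup bound. Given such a sequence, dominated convergence using $u\in L^1(D;m)\cap L^1(D;\nu)$ transfers the inequality already established for each $\xi_n$ to the limit $\eta$, yielding (\ref{eq7.1orew}).

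For the construction, a natural route writes $\eta=R^Dg$ with $g:=-(\Delta^\alpha)_{|D}\eta\in L^\infty(D;m)$, approximates $g$ by $g_n\in C_c^\infty(D)$ with $\|g_n\|_\infty\le\|g\|_\infty$ and $g_n\to g$ $m$-a.e., and uses interior regularity of $R^D$ combined with a smooth cutoff $\chi_n\nearrow\mathbf{1}_D$ to produce compactly supported $\xi_n$ for which $(\Delta^\alpha)_{|D}\xi_n$ stays close to $-g_n$. The main obstacle lies precisely in this construction: for $\alpha\in(0,1)$ the commutator $[(\Delta^\alpha)_{|D},\chi_n]$ contains nonlocal tail contributions that do not vanish pointwise as $\chi_n\nearrow\mathbf{1}_D$, and the quantitative bound (\ref{eq7.bpe2}), whose constant depends only on $\|H_D(\xi_n)\|_{L^1(D;m)}$, becomes crucial here: by arranging the approximation so that $H_D(\xi_n)\to 0$ in $L^1(D;m)$, the error incurred when testing against the bounded function $(\Delta^\alpha)_{|D}\eta$ is controlled uniformly in $\eta$, completing the passage to the limit.
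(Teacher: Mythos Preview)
Your opening observation that $C_c^\infty(D)\subset F$ with $(\Delta^\alpha)_{|D}\xi=\Delta^\alpha\xi$ for such $\xi$ is correct, but the density argument that follows has a genuine gap.

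The construction of the approximating sequence $\xi_n$ is never carried out, and the obstacles you flag are real and not resolved. Writing $\xi_n=\chi_n R^Dg_n$ (or any variant with a cutoff) you need \emph{uniform} sup bounds on $(\Delta^\alpha)_{|D}\xi_n$ to run dominated convergence, but the commutator $[(\Delta^\alpha)_{|D},\chi_n]$ applied to $R^Dg_n$ involves either derivatives of $\chi_n$ (which blow up as $\chi_n\nearrow\mathbf 1_D$) or, in the nonlocal case, tails that do not vanish. Nothing you have written controls this. Separately, you need $\xi_n\ge 0$ to invoke (\ref{eq7.1}), yet $g=-(\Delta^\alpha)_{|D}\eta$ need not be nonnegative, so $R^Dg_n$ has no reason to be positive for finite $n$. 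Finally, your appeal to (\ref{eq7.bpe2}) is circular: for $\xi_n\in C_c^\infty(D)$ one already has $H_D(\xi_n)\equiv 0$, so (\ref{eq7.bpe2}) just restates the equality $(\Delta^\alpha)_{|D}\xi_n=\Delta^\alpha\xi_n$ you used in Step~1; it gives no handle on the commutator error.

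The paper avoids approximating $\eta$ altogether. It mollifies $u$ instead: with $u_\varepsilon=j_\varepsilon*u$ one can test (\ref{eq7.1}) against $j_\varepsilon*(j_\delta*\eta)\in C_c^\infty(D)$ for $\eta\in F((\Delta^\alpha)_{|U})$, $\bar U\subset D$, move the mollifier onto $u$, integrate by parts on $\BR^d$, and then apply the \emph{inequality} (\ref{eq7.bpe1}) (not (\ref{eq7.bpe2})) with $u_\varepsilon$ in the role of the smooth function to pass from $-\Delta^\alpha$ to $-(\Delta^\alpha)_{|U}$. Letting $\varepsilon\to 0$ gives (\ref{eq7.1orew}) on every $U\Subset D$. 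The passage to $D$ itself is then done not by density but by Riesz decomposition: one obtains $u+R^U(u\cdot\nu)=H_U(u)+R^U\mu$ with a measure $\mu$ consistent across subdomains, exhausts $D$ by $D_n\nearrow D$, and finally verifies (\ref{eq7.1orew}) directly by writing $\eta=R^D\rho$ and computing both sides from the decomposition $u+R^D(u\cdot\nu)=h+R^D\mu$. The point is that regularizing the \emph{solution} sidesteps the boundary/commutator issues that block your approximation of the \emph{test function}.
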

\begin{proof}
We extend $u$ (resp. $\nu$) to $\BR^d$ by letting $u=0$  (resp.
$\nu=0$) on $\BR^d\setminus D$, and then set $u_\varepsilon:=
j_\varepsilon*u$, $(u\cdot\nu)_\varepsilon:=
j_\varepsilon*(u\cdot\nu)$. Let $U$ be an  open set  such that $\overline
U\subset D$. Let $\eta\in \mathcal U_b(A_{|U})$. By
(\ref{eq7.1}), \eqref{eq.splot1}, for $\varepsilon,\delta>0$ small enough, we have
\begin{align*}
0&\le \langle u,-A (j_\varepsilon * (j_\delta*\eta)\rangle +\langle u\cdot \nu, (j_\varepsilon * (j_\delta*\eta)\rangle\\
&=\langle u_\varepsilon,-A (j_\delta*\eta)\rangle +\langle (u\cdot \nu)_\varepsilon, j_\delta*\eta\rangle
\\&  = \langle -A u_\varepsilon, j_\delta*\eta\rangle +\langle (u\cdot \nu)_\varepsilon, j_\delta*\eta\rangle.
\end{align*}
The most right term  converges to $ \langle -Au_\varepsilon,
\eta\rangle +\langle (u\cdot \nu)_\varepsilon, \eta\rangle$ as
$\delta\searrow 0$. From  this and (\ref{eq7.bpe1}), we conclude
\[
0\le \langle -Au_\varepsilon, \eta\rangle +\langle (u\cdot \nu)_\varepsilon, \eta\rangle\le
\langle u_\varepsilon, -A_{|U}  \eta\rangle +\langle (u\cdot \nu)_\varepsilon, \eta\rangle.
\]
Letting $\varepsilon \searrow 0$ we obtain
\[
\langle  u, -A_{|U}\eta\rangle +\langle u\cdot \nu, \eta\rangle\ge 0,\quad \eta\in \mathcal U_b(A_{|U}).
\]
By Step 1 of the proof of Theorem \ref{prop5.1}, $u+R^U(u\cdot\nu)$ is an excessive function with respect to $(P^U_t)$.
By  Riesz's decomposition theorem, there exists a positive  Borel
measure $\mu_U$ such that
\[
u+R^U(u\cdot \nu)=H_U(u)+R^U\mu_U.
\]
By the uniqueness argument, $\mu_U=(\mu_W)_{\lfloor U}$ for $U\subset W$ and $\bar W\subset D$.
Therefore there exists a positive Borel measure  $\mu$ on $D$ such that $\mu_{\lfloor U}=\mu_U$ for every open $U\subset D$ with $\bar U\subset D$. Thus
\[
u+R^U(u\cdot \nu)=H_U(u)+R^U\mu\quad \mbox{q.e.}
\]
Let $D_n$ be an increasing  sequence of  open sets  such that $\bar D_n\subset D$ and $\bigcup_{n\ge 1} D_n=D$. We have
\[
u+R^{D_n}(u\cdot\nu)=H_{D_n}(u)+R^{D_n}\mu\quad \mbox{q.e.}
\]
Letting $n\rightarrow \infty$ we get
\begin{equation}
\label{eq.iei2}
u+R^{D}(u\cdot\nu)=h+R^{D}\mu\quad \mbox{q.e.}
\end{equation}
with $h=\lim_{n\rightarrow \infty} H_{D_n}(u)$. One easily shows  that  $h$ is an excessive  function with respect to $(P^D_t)$.
Let $\eta\in \mathcal U_b (A_{|D})$. Then there exists $\rho \in\BB_b(E)$ such that $\eta=R^D\rho$. By (\ref{eq.iei2}),
\begin{align*}
\langle u,-A_{|D}\eta\rangle+\langle u\cdot\nu,\eta\rangle&=\langle u,\rho\rangle+\langle u\cdot\nu,R^D\rho\rangle=
\langle u,\rho\rangle+\langle R^D(u\cdot\nu),\rho\rangle\\
&=\langle h,\rho\rangle+\langle R^D\mu,\rho\rangle=\langle h,\rho\rangle+\langle \mu,R^D\rho\rangle\\
&=\langle h,-A_{|D}R^D\rho\rangle+\langle \mu,\eta\rangle\ge \langle h,-A_{|D}R^D\rho\rangle.
\end{align*}
This implies (\ref{eq7.1orew}) since $\langle h,-A_{|D}R^D\rho\rangle=\lim_{t\rightarrow0^+}\frac1t\langle h-T^D_th,R^D\rho\rangle\ge 0$.
\end{proof}

\begin{theorem}
\label{th7.1} Assume that   $(T^D_t)_{t\ge 0}$ is irreducible. Let $\nu$ be a positive smooth measure on $D$ and
$u\in L^1(D;m)\cap L^1(D;\nu)$ be a positive quasi-continuous
function such that  
\begin{equation}
\label{eq7.1jsndjf}
\langle u,-A \xi\rangle +\langle u\cdot \nu,\xi\rangle\ge 0,\quad \xi\in C_c^\infty(D),\, \xi\ge 0.
\end{equation}
Then there exists an $m$-version $\check u$ of $u$, which is  finely-continuous on $E_\nu\cap D$.
Moreover, if $\check u(x)=0$ for some $x\in E_\nu\cap D$, then $\check u\equiv0$ in $D$.
\end{theorem}
\begin{proof}
It  follows from Theorem \ref{th5.1} and Lemma \ref{lm7.1}.

\end{proof}

\begin{remark}
If $A$ is given by \eqref{eq.levy}, then, by \cite[Proposition 2.2]{Grzywny}, $(T^D_t)$ is irreducible provided
one of the following conditions holds:
\begin{enumerate}
\item[(1)] $a>0$ and $D$ is connected,
\item[(2)] $\overline{D}$ is a subset of the support of $\mu$.
\end{enumerate} 
\end{remark}

\begin{corollary}
Under the assumptions of Theorem \ref{th7.1},  if
$Cap_A(\{u=0\}\cap D)>0$, then $u=0$ q.e. in $D$.
\end{corollary}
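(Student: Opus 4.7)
The plan is to mimic the proof of Corollary \ref{cor5.1g}, replacing Theorem \ref{th5.1} by Theorem \ref{th7.1}. The two ingredients needed are: (a) a finely continuous $m$-version $\check u$ of $u$ on $E_\nu$, which is provided by Theorem \ref{th7.1}; and (b) the fact that $\mbox{Cap}(N_\nu)=0$, which follows from the definition of a smooth measure $\nu$ (see condition (a) in the definition of smooth measures in Section \ref{sec2.1}, together with Theorem \ref{prop4.1pw}).

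First I would note that since $u$ is quasi-continuous and $\check u$ is another $m$-version of $u$ which is quasi-continuous (being finely continuous and finite q.e., by \cite[Theorem 4.6.1]{FOT}), the standard fact that two quasi-continuous $m$-versions agree q.e. gives $\check u = u$ q.e. Therefore the hypothesis $\mbox{Cap}(\{u=0\})>0$ translates into $\mbox{Cap}(\{\check u=0\})>0$.

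Next I would use $\mbox{Cap}(N_\nu)=0$ to conclude that $\{\check u=0\}$ is not contained in $N_\nu$. Indeed, if it were, then $\{\check u=0\}$ would be polar and hence have capacity zero, contradicting the previous step. So there exists some $x\in E_\nu = D\setminus N_\nu$ with $\check u(x)=0$. Applying Theorem \ref{th7.1} to this point yields $\check u = 0$, and consequently $u=0$ q.e.

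No real obstacle is foreseen: the entire argument is a one-line bookkeeping reduction to Theorem \ref{th7.1}. The only subtlety worth checking is that $\mbox{Cap}(N_\nu)=0$ in the present setting (the Dirichlet Laplacian on a bounded domain), but this follows from the construction in Theorem \ref{prop4.1pw}, since $N_\nu$ is by definition the exceptional set of the PCAF $A^\nu$ and hence polar.
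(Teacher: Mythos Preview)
Your proposal is correct and follows essentially the same route as the paper's own proof: obtain the finely continuous $m$-version $\check u$ from Theorem \ref{th7.1}, use quasi-continuity to get $\check u=u$ q.e., then combine $\mbox{Cap}(\{\check u=0\})>0$ with $\mbox{Cap}(N_\nu)=0$ to find a point $x\in E_\nu$ with $\check u(x)=0$, and conclude via Theorem \ref{th7.1}. Your additional justification that $\check u$ is quasi-continuous (via fine continuity and \cite[Theorem 4.6.1]{FOT}) is a harmless elaboration of what the paper leaves implicit.
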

\begin{proof}
By Theorem \ref{th7.1}, there exists an $m$-version  $\check u$ of
$u$, which is finely-continuous on $E_\nu\cap D$. Since $u$
is quasi-continuous, $\check u= u$ q.e. in $D$, so by the assumptions of
the corollary, $Cap_A(\{\check u=0\}\cap D)>0$. Since
$Cap_A(N_\nu)=0$, there exists $x\in E_\nu\cap D$ such that $\check
u(x)=0$. Hence, by Theorem \ref{th7.1}, $\check u=0$ in $D$, which implies that
$u=0$ q.e. in $D$.
\end{proof}

\subsection{Finely-continuous versions of supersolutions}

In general, if $u$ is a finely-continuous positive function on a finely-open
set $U\subset E$, then by \eqref{eq.acs2}
\begin{equation}
\label{pfor2}
u(x)=\lim_{r\searrow 0}\mathbb E_xu(X_{\tau_{B(x,r)}})=\lim_{t\searrow 0}\mathbb E_xu(X_t)=\lim_{\alpha\to\infty} \alpha\mathbb E_x\int_0^\infty e^{-\alpha t}u(X_t)\,dt,
\quad x\in U. 
\end{equation}
The last two equalities combined with \eqref{Green} imply  that if a function $u:E\to\BR$ has an $m$-version $\check u$
which is   finely-continuous on $U$, then  \eqref{fcw101} holds. The first equation in \eqref{pfor2} can be  recast  as follows
\begin{equation}
\label{pfor1}
u(x)=\lim_{r\searrow 0} \int_{E\setminus B(x,r)} u(y)\,P_{B(x,r)}(x,dy),
\end{equation}
where for fixed $x\in U$, $P_{B(x,r)}(x,dy)$ is a Borel measure on $E\setminus B(x,r)$, so called harmonic measure. 
The kernel  $P_{B(x,r)}(x,dy)$ is called in the literature the Poisson kernel. It is given by the following formula
\begin{equation}
\label{pfor}
P_{B(x,r)}(x,dy):= P_x(X_{\tau_{B(x,r)}}\in dy).
\end{equation}

In case of the Laplacian and the fractional Laplacian,  Poisson's kernels may be computed explicitly. 
In many cases, although no explicit formula is  known, asymptotic behavior of the Poisson kernel is well studied 
(see e.g. \cite{KK}).

\begin{lemma}
\label{lm7.fcv} Let $A=\Delta_{|D}$. Let $u$ be a finely-continuous positive bounded
function on $D$. Then
\[
u(x)=\lim_{r\rightarrow0^+} \dashint_{B(x,r)} u(y)\,dy,\quad x\in D.
\]
\end{lemma}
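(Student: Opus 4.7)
The plan is to exploit fine continuity to replace $u$ by the constant $u(x)$ on a fine neighborhood of $x$, and to show that the complement of this fine neighborhood has vanishing Euclidean density at $x$, so contributes negligibly to the ball average. Fix $x\in D$ and $\varepsilon>0$. Since $u$ is finely continuous at $x$, the set
\[
V_\varepsilon:=\{y\in D:|u(y)-u(x)|<\varepsilon\}
\]
is a fine neighborhood of $x$; equivalently, its complement $A_\varepsilon:=D\setminus V_\varepsilon$ is thin at $x$ in the sense of the potential theory of $\Delta_{|D}$, which coincides locally with the classical Newtonian fine topology on $\BR^d$ (since the killed Brownian motion on $D$ agrees with Brownian motion on $\BR^d$ up to the exit time from $D$).

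The crux of the argument, and the main obstacle, is the classical fact that in $\BR^d$, $d\ge 2$, any set thin at $x$ has Lebesgue density zero at $x$:
\[
\lim_{r\to 0^+}\frac{|A_\varepsilon\cap B(x,r)|}{|B(x,r)|}=0.
\]
For $d\ge 3$ this is a standard consequence of the Wiener criterion: thinness at $x$ is equivalent to $\sum_{n\ge 1}2^{n(d-2)}\mbox{Cap}(A_\varepsilon\cap C_n)<\infty$ with $C_n=\{y:2^{-n-1}\le|y-x|<2^{-n}\}$, and the isocapacitary inequality $\mbox{Cap}(B)\ge c|B|^{(d-2)/d}$ then forces the ratios $|A_\varepsilon\cap C_n|/|C_n|$ to tend to $0$. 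A geometric-series summation then upgrades vanishing annular density to vanishing ball density. The case $d=2$ follows from the analogous logarithmic Wiener test.

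Granted this density-zero property, split
\[
\dashint_{B(x,r)}u(y)\,dy=\frac{1}{|B(x,r)|}\int_{V_\varepsilon\cap B(x,r)}u(y)\,dy+\frac{1}{|B(x,r)|}\int_{A_\varepsilon\cap B(x,r)}u(y)\,dy.
\]
On $V_\varepsilon$ the integrand differs from $u(x)$ by less than $\varepsilon$, so the first summand differs from $u(x)\cdot|V_\varepsilon\cap B(x,r)|/|B(x,r)|$ by at most $\varepsilon$, and its coefficient tends to $1$ as $r\to 0^+$ by the density statement. The second summand is bounded by $\|u\|_\infty\cdot|A_\varepsilon\cap B(x,r)|/|B(x,r)|\to 0$. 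Taking $\limsup$ and $\liminf$ as $r\to 0^+$ shows both lie within $\varepsilon$ of $u(x)$, and since $\varepsilon>0$ was arbitrary, the limit exists and equals $u(x)$.
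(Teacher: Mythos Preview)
Your argument is correct and takes a genuinely different route from the paper's proof. The paper proceeds probabilistically: it writes the ball average as a radial average of spherical means and identifies each spherical mean with the exit expectation $E_x u(X_{\tau_{B(x,s)}})$ via the fact that the harmonic measure of a ball for Brownian motion is the normalized surface measure on its boundary; fine continuity of $u$ is then used in its probabilistic form (right-continuity of $t\mapsto u(X_t)$) to conclude that $E_x u(X_{\tau_{B(x,s)}})\to u(x)$ as $s\to 0^+$, and a trivial weighted-average argument finishes the proof. Your approach instead stays on the analytic side of classical potential theory, using that the complement of a fine neighborhood is thin at $x$ and then invoking the Wiener criterion together with the isocapacitary inequality to deduce that thin sets have Lebesgue density zero. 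The paper's method is shorter and entirely self-contained within the probabilistic setup already in place (no Wiener test or isocapacitary inequality needed), and it transparently explains why the analogous statement with $I^{(\alpha)}_r$ holds for the fractional Laplacian in Lemma~\ref{lm7.fcvf}. Your method, by contrast, does not immediately adapt to the $\alpha$-stable case (the averaging kernel $I^{(\alpha)}_r$ is not a normalized indicator, so a pure density-zero statement is insufficient), but it has the merit of making the connection with the classical Lebesgue-point picture explicit. One small remark: your $d=2$ case is asserted rather than argued; the logarithmic Wiener test does give the conclusion, but the isocapacitary step is different in form, so if you retain this approach you should cite the density-zero fact directly (it is classical) rather than leave it as ``analogous''.
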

\begin{proof}
Let $a_{d,r}= m(B(x,r))=r^d \cdot a_d$ where
$a_d=\pi^{d/2}/\Gamma(\frac d2+1)$, and let $b_{d,r}= S(\partial
B(x,r))=r^{d-1}\cdot b_d$ where $b_d=2\pi^{d/2}/\Gamma(\frac d2)$.
Note that $b_d/a_d=d$. By using \cite[Proposition 1.21]{ChungZhao}
we find that
\[
\dashint_{B(x,r)} u(y)\,dy=\frac{1}{a_{d,r}}\int_0^r \int_{\partial B(x,s)} u(y)\,dS(y)\,ds
=\frac{1}{a_{d,r}}\int_0^r b_{d,s}\mathbb E_xu(X_{\tau_{B(x,s)}})\,ds.
\]
Hence
\[
\dashint_{B(x,r)} u(y)\,dy
=\frac{b_d}{a_{d}}\frac{1}{d}\Big(\frac{1}{r^d}\int_0^r \mathbb E_xu(X_{\tau_{B(x,s)}})\,d(s^{d})\Big)
=\frac{1}{r^d}\int_0^r \mathbb E_xu(X_{\tau_{B(x,s)}})\,d(s^{d}).
\]
Since $u$ is finely-continuous,
$\mathbb E_xu(X_{\tau_{B(x,s)}})\rightarrow u(x)$ as $s\searrow 0$, which
combined with the above equation gives the desired result.
\end{proof}

Now Theorem \ref{th7.1} can be restated as follows.

\begin{theorem}
\label{th7.1rest}
Let $D$ be an open subset of $\BR^d$ such that $(T^D_t)$ is irreducible.
Let $\nu$ be a positive smooth measure on $D$
and $u\in L^1(D;m)\cap L^1(D;\nu)$ be a positive quasi-continuous
function  such that \mbox{\rm(\ref{eq7.1jsndjf})} is satisfied.
If 
\[
\limsup_{r\rightarrow0^+}\dashint_{B(x,r)} u(y)\,dy=0
\]
for some $x\in E_\nu\cap D$, then $u=0$ q.e. in $D$.
\end{theorem}
\begin{proof}
Follows from Corollary \ref{wn.red}, Theorem \ref{th7.1}  and
Lemma \ref{lm7.fcv}.
\end{proof}

We shall provide  one another corollary, which  was the
main result of the recent paper by Orsina and Ponce
\cite{OrsinaPonce}.

For any compact $K\subset D$ and $p>1$, we define
\[
Cap_{W^{2,p}}(K):=\inf\{\|u\|_{W^{2,p}}: u\in
C_c^\infty(\BR^d),\, u\ge \mathbf{1}_{K}\}.
\]
In the standard way $Cap_{W^{2,p}}$ can be extended to
arbitrary set $A\subset \BR^d$ (see \cite[Definition 2.2.4]{AH}).

\begin{remark}
\label{rem7.czl} By the Calder\'on-Zygmund $L^p$-theory,
$Cap_{W^{2,p}}$ is equivalent to $C_p$ for the operator
$A=\Delta_{|D}$ (see  Section \ref{sec6} for the definition of
$C_p$).
\end{remark}

\begin{theorem}
Let $D$ be an open subset of $\BR^d$ such that $(T^D_t)$ is irreducible.
Let $p>1$ and $V\in L^p(D;m)$ be  positive.
Let $u\in L^1(D;m)\cap L^1(D;V\cdot m)$ be a positive function   that satisfies  
\[
\langle u,-\Delta \xi\rangle +\langle V u,\xi\rangle\ge 0,\quad \xi\in C_c^\infty(D),\,\xi\ge 0.
\]
Write
\[
Z:= \Big\{x\in D: \limsup_{r\rightarrow0^+}\dashint_{B(x,r)} u(y)\,dy=0\Big\}.
\]
If $\mbox{\rm Cap}_{W^{2,p}}(Z)>0$, then $u=0$ $m$-a.e. in $D$.
\end{theorem}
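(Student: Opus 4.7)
Set $\nu:=V\cdot m$; by Lemma \ref{lm6.1} this is a positive smooth measure, so the full setup of Theorem \ref{th7.1} is available (taking a quasi-continuous $m$-version of $u$, which exists because, by Proposition \ref{prop5.1}, the inequality produces a finely-continuous representative $\check u$ on $E_\nu$). The first step is to identify $Z\cap E_\nu$ with $\{\check u=0\}\cap E_\nu$ through the pointwise formula
\[
\check u(x)=\limsup_{r\to 0^+}\dashint_{B(x,r)}u(y)\,dy,\qquad x\in E_\nu.
\]
To prove this I would apply Lemma \ref{lm7.fcv} to the truncations $\check u\wedge k$, which are bounded and finely-continuous on $E_\nu$; since ball-averages depend only on the $m$-class, $\dashint_{B(x,r)}(\check u\wedge k)\,dy=\dashint_{B(x,r)}(u\wedge k)\,dy$, and then let $k\nearrow\infty$ by monotone convergence. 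Corollary \ref{wn.red} makes the truncation legitimate by ensuring each $u\wedge k$ still satisfies an inequality of the right form.

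Next, I would bound $N_\nu$ in the $W^{2,p}$-capacity scale. In the proof of Theorem \ref{th6.1} the inclusion $N_\nu\subset\{RV=\infty\}$ together with Meyers' theorem yielded $C_p(N_\nu)=0$. Remark \ref{rem7.czl} then tells us that, for the operator $\Delta_{|D}$, the Riesz capacity $C_p$ and the Sobolev capacity $\mbox{Cap}_{W^{2,p}}$ are equivalent by Calder\'on--Zygmund $L^p$-regularity, hence
\[
\mbox{Cap}_{W^{2,p}}(N_\nu)=0.
\]

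Finally, the hypothesis $\mbox{Cap}_{W^{2,p}}(Z)>0$ combined with $\mbox{Cap}_{W^{2,p}}(N_\nu)=0$ forces $Z\cap E_\nu\neq\emptyset$. For any $x$ in this intersection the first step gives $\check u(x)=0$. Since $D$ is a domain, the semigroup generated by $\Delta_{|D}$ is irreducible (Remark \ref{rem3.fc1}), so Theorem \ref{th7.1} applies and yields $\check u\equiv 0$; because $\check u$ is an $m$-version of $u$, this gives $u=0$ $m$-a.e.

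\textbf{Main obstacle.} The most delicate point is the pointwise identity $\check u(x)=\limsup_{r\to 0^+}\dashint_{B(x,r)}u\,dy$ at every $x\in E_\nu$: Lemma \ref{lm7.fcv} covers only bounded finely-continuous functions, so a truncation-and-limit argument built on Corollary \ref{wn.red} is required, and one must verify that the interplay between the $m$-a.e. identification $\check u=u$ and the ball-averages is harmless. The capacity comparison $\mbox{Cap}_{W^{2,p}}(N_\nu)=0$ rests on the Meyers estimate and Calder\'on--Zygmund theory, both invoked as black boxes from the cited references.
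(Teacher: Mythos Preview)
Your argument is correct and is essentially the paper's own proof: the paper packages the truncation step (Corollary \ref{wn.red}), the ball-average identification (Lemma \ref{lm7.fcv}), and the strong maximum principle (Theorem \ref{th7.1}) into Theorem \ref{th7.1rest}, and then combines $C_p(N_\nu)=0$ with the Calder\'on--Zygmund equivalence $C_p\sim\mbox{Cap}_{W^{2,p}}$ exactly as you do. One small remark: your truncation argument only yields the inequality $\check u(x)\le\liminf_{r\to0^+}\dashint_{B(x,r)}u\,dy$ (interchange of $\sup_k$ and $\lim_r$ goes one way), but this inclusion $Z\cap E_\nu\subset\{\check u=0\}$ is all that is needed.
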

\begin{proof}
By Remark \ref{rem7.czl}, $C_p(Z)>0$. Write $\nu=V\cdot m$. Since $C_p(N_{\nu})=0$ (see the proof of Theorem \ref{th6.1}), $E_{\nu}\cap Z\neq\emptyset$, so
by Theorem \ref{th7.1rest}, $u=0$ $m$-a.e.
\end{proof}

\begin{remark}
\label{rem.fcom}
Let $D$ be a bounded domain in $\BR^d$.
Let $Z$ be as in  \eqref{eq.zet}, and, for given positive $f\in L^\infty(D;m)$, let $\omega_f\in W^{1,2}_0(D)\cap L^\infty(D;m)\cap L^1(D;V\cdot m)$
be a  unique solution to \eqref{eq.op}. By \cite[Proposition 3.2(ii),Theorem 6.4]{K:NA1} $\omega_f$ has a finely-continuous $m$-version $\check \omega_f$
and 
\[
\check w_f(x)=\mathbb E_x\int_0^{\tau_D}e^{-\int_0^tV(X_r)\,dr}f(X_t)\,dt,\quad x\in D.
\]
We see that $\omega_f(x)=0$, whenever $x\in N_V$. On the other hand $\{\omega_1=0\}=N_V$.
Thereby, applying Lemma \ref{lm7.fcv} yields that $N_V=Z$.
\end{remark}

In what follows,
$c_{d,\alpha}=\pi^{1+d/2}\Gamma(d/2)\sin{\pi\alpha} $ and
\[
I^{(\alpha)}_ru(x)=c_{d,\alpha} \int_{\BR^d\setminus B(x,r)}
\frac{r^{\alpha}}{|y-x|^d(|y-x|^2-r^2)^\alpha}u(y)\,dy,\quad x\in D.
\]
\begin{lemma}
\label{lm7.fcvf}
Let $A=\Delta^\alpha$ for an $\alpha\in (0,1)$.
Let $u$ be a finely-continuous positive bounded function on $D$. Then
\[
u(x)=\lim_{r\rightarrow0^+} I^{(\alpha)}_ru(x),\quad x\in D.
\]
\end{lemma}
\begin{proof}
It is well known that
$\mathbb E_xu(X_{\tau_{B(x,r)}})=I^{(\alpha)}_ru(x)$, $x\in D$ (see, e.g.,
\cite[Section 4]{Kwasnicki}). On the other hand, since $u$ is
finely-continuous,  $\mathbb E_xu(X_{\tau_{B(x,r)}})\rightarrow u(x)$ as
$r\rightarrow0^+$.
\end{proof}

Similarly to the case of the Laplacian, we can now  restate Theorem \ref{th7.1} as follows.

\begin{theorem}
\label{th7.1b.rest}
Let $\nu$ be a positive smooth measure on $D$ and
$u\in L^1(D;m)\cap L^1(D;\nu)$ be a positive quasi-continuous function
such that \mbox{\rm (\ref{eq7.1jsndjf})} is satisfied.
If $\limsup_{r\rightarrow0^+}I^{(\alpha)}_ru(x)=0$ for some $x\in
E_\nu\cap D$, then $ u=0$ q.e. in $D$.
\end{theorem}
\begin{proof}
Follows from Corollary \ref{wn.red},  Theorem \ref{th7.1} and
Lemma \ref{lm7.fcvf}.
\end{proof}

For $0<s<1$ and $p>1$, we define
\[
|u|_{W^{s,p}(D)}=\Big( \int_D\int_D\frac{|u(x)-u(y)|^p}{|x-y|^{d+sp}}\,dx\,dy\Big)^{1/p},
\]
and for an arbitrary $s>0$ such that $s\notin \mathbb N$ we define
\[
W^{s,p}(D)=\Big\{u\in W^{[s],p}(D): \Big|\frac{\partial^k u}{\partial x_1^{k_1}\dots\partial x_d^{k_d}}\Big|_{W^{s-[s],p}(D)}<\infty,\, |k|=[s],\, k\in\mathbb N^d\Big\},
\]
where  $|k|=k_1+\ldots+k_d$. We adopt the convention that
$W^{0,p}(D)=L^p(D)$.  Let $K$ be a compact subset of $D$. We
set
\[
Cap_{W^{2\alpha,p}}(K):=\inf\{\|u\|_{W^{2\alpha,p}(D)}:
u\in C_c^\infty(D),\, u\ge \mathbf{1}_{K}\}.
\]
In the standard way $Cap_{W^{2\alpha,p}}$ can be extended
to arbitrary set $A\subset D$ (see \cite[Definition 2.2.4]{AH}).

\begin{proposition}
\label{prop8.1} Let $K$ be a compact subset of $D$ and $p\ge 2$. Let $C_p$
be  Riesz's capacity defined for the operator $A=(\Delta^\alpha)_{|D}$.
Then $C_p(K)=0$ if and only if $\mbox{\rm Cap}_{W^{2\alpha,p}}(K)=0$.
\end{proposition}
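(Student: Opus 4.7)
The plan is to combine sharp two-sided Green function estimates for the Dirichlet fractional Laplacian with the Calder\'on--Zygmund $L^p$-theory of the Bessel potential. Since $K$ is a compact subset of $D$, I would first pick an open neighborhood $U$ of $K$ with $K\subset U\subset \overline U\subset D$. By the classical sharp two-sided Green function bounds for $(\Delta^\alpha)_{|D}$ on smooth open sets, there are constants $c_1,c_2>0$ depending on $U$, $D$, $\alpha$, $d$ such that
\[
c_1|x-y|^{2\alpha-d}\le G^D(x,y)\le c_2|x-y|^{2\alpha-d},\qquad x,y\in\overline U.
\]
The upper bound is immediate from domain monotonicity $G^D\le c_{d,\alpha}|x-y|^{2\alpha-d}$, and the lower bound on compact subsets of $D$ is standard (we use $2\alpha<d$).

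Next I would show that $C_p(K)=0$ iff the Riesz $(2\alpha,p)$-capacity
\[
\tilde C_p(K)=\inf\{\|f\|_{L^p(\BR^d)}^p: f\ge 0,\ I_{2\alpha}f\ge \mathbf{1}_K\}
\]
vanishes, where $I_{2\alpha}f(x)=c_{d,\alpha}\int_{\BR^d}|x-y|^{2\alpha-d}f(y)\,dy$. If $C_p(K)=0$, take admissible $f_n\ge 0$ on $D$ with $R^Df_n\ge\mathbf{1}_K$ and $\|f_n\|_{L^p}\to 0$; the upper Green bound gives $I_{2\alpha}(f_n\mathbf{1}_D)\ge c_2^{-1}c_{d,\alpha}^{-1}R^Df_n\ge c_2^{-1}c_{d,\alpha}^{-1}\mathbf{1}_K$ on $K$, hence $\tilde C_p(K)=0$. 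Conversely, if $\tilde C_p(K)=0$, split admissible $f_n$ as $f_n=\mathbf{1}_Uf_n+\mathbf{1}_{\BR^d\setminus U}f_n$; the second piece contributes on $K$ a function bounded by $C\|f_n\|_{L^p}$ (since $|x-y|$ stays away from zero on $K\times(\BR^d\setminus U)$), so after a controlled rescaling one may assume $f_n$ is supported in $U$, and then the lower Green bound yields $R^Df_n\ge c_1 c_{d,\alpha}^{-1}I_{2\alpha}f_n\ge c\,\mathbf{1}_K$ on $K$, whence $C_p(K)=0$.

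Finally I would invoke two classical equivalences. On compact sets, for $p>1$ and $2\alpha p\le d$, the Riesz capacity $\tilde C_p$ and the Bessel capacity
\[
C_{2\alpha,p}(K)=\inf\{\|f\|_{L^p}^p: f\ge 0,\ G_{2\alpha}*f\ge\mathbf{1}_K\}
\]
vanish simultaneously (see Adams--Hedberg \cite{AH}), because the Bessel kernel $G_{2\alpha}$ coincides with a constant multiple of the Riesz kernel near the origin up to a lower-order perturbation and decays exponentially at infinity. Furthermore, the Calder\'on--Zygmund theorem identifies the Bessel potential space $L^{2\alpha,p}(\BR^d)=\{G_{2\alpha}*f:f\in L^p\}$ with the Sobolev--Slobodeckij space $W^{2\alpha,p}(\BR^d)$ with equivalent norms for $p\ge 2$, so $C_{2\alpha,p}$ and $\mbox{Cap}_{W^{2\alpha,p}}$ share the same null compacta. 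Chaining these three equivalences completes the proof.

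The main obstacle is Step 2: the Riesz capacity lives on all of $\BR^d$ while $C_p$ is built from test functions on $D$, so the split of admissible densities into pieces supported in $U$ and in $\BR^d\setminus U$ must be handled carefully, and one must verify that the ``far'' piece contributes only a uniformly small error on $K$ that can be absorbed after rescaling. A secondary technicality is the identification of the Bessel potential space $L^{2\alpha,p}$ with $W^{2\alpha,p}$ for non-integer $2\alpha$; this is precisely where the restriction $p\ge 2$ enters, as for $p\in(1,2)$ the natural equivalence is with a Besov space, as remarked in the introduction.
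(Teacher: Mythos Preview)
Your chain of equivalences is a natural strategy, but the final link contains an error. For non-integer $2\alpha$ and $p>2$, the Bessel potential space $L^{2\alpha,p}(\BR^d)=F^{2\alpha}_{p,2}$ does \emph{not} coincide with the Sobolev--Slobodeckij space $W^{2\alpha,p}(\BR^d)=B^{2\alpha}_{p,p}$; the two agree only at $p=2$. What does hold for $p\ge 2$ is the one-sided embedding $L^{2\alpha,p}\hookrightarrow W^{2\alpha,p}$ (the standard Triebel--Lizorkin/Besov comparison $F^{s}_{p,2}\hookrightarrow B^{s}_{p,p}$ for $p\ge 2$), and that delivers only the implication $C_{2\alpha,p}(K)=0\Rightarrow\mbox{\rm Cap}_{W^{2\alpha,p}}(K)=0$. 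Chained back through your Steps~2--3 this establishes $C_p(K)=0\Rightarrow\mbox{\rm Cap}_{W^{2\alpha,p}}(K)=0$, but the reverse implication in the Proposition is left unproved by your argument. Closing it requires the deeper fact that the $F^{2\alpha}_{p,2}$- and $B^{2\alpha}_{p,p}$-capacities share null sets, which comes from nonlinear potential theory (Wolff inequalities) as in \cite{AH}, not from a norm equivalence that does not exist.

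The paper's argument is more direct and avoids both the Riesz/Bessel detour and your Step~2 localization. For the direction $C_p(K)=0\Rightarrow\mbox{\rm Cap}_{W^{2\alpha,p}}(K)=0$ it takes $f_\varepsilon\in L^p(D)$ with $R^Df_\varepsilon\ge 2\mathbf 1_K$ and $\|f_\varepsilon\|_{L^p}\le\varepsilon$, extends it by zero, forms the free Riesz potential $v=Rf_\varepsilon$ on $\BR^d$, and uses the embedding above to get $\|v\|_{W^{2\alpha,p}(\BR^d)}\lesssim\varepsilon$; then $v$ is mollified and multiplied by a fixed cutoff $\xi\in C_c^\infty(D)$, $\xi\ge\mathbf 1_K$, to produce a $C_c^\infty(D)$ competitor (lower semicontinuity of $v\ge R^Df_\varepsilon\ge 2\mathbf 1_K$ keeps the mollification above $\mathbf 1_K$, and the cutoff is controlled via the multiplier bound of \cite[Theorem~1.4.1.1]{Grisvard}). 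This sidesteps the far-piece splitting you flagged as delicate, so the borderline case $2\alpha p=d$ poses no extra difficulty, and it replaces the two-sided Green estimate by the trivial domination $R^D\le R$. The reverse direction the paper handles by a one-line set-inclusion argument.
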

\begin{proof}
Since $\{u\in C_c^\infty(D):u\ge \mathbf{1}_{K}\}\subset \{R^Df:
f\in L^p(D),\, R^Df\ge\mathbf{1}_K\}$,  we have
$C_p(K)\le Cap_{W^{2\alpha,p}}(K)$. To show the necessity
part, suppose that that $C_p(K)=0$.  By the definition of $C_p$,
for every $\varepsilon>0$ there exists a positive
$f_\varepsilon\in L^p(D)$ such that $R^Df_\varepsilon\ge
2\mathbf{1}_K$ and $\|f_\varepsilon\|_{L^p(D)}\le\varepsilon$. We
set $u= R^Df_\varepsilon$ and extend to $\BR^d$ by putting
$f_\varepsilon =0$ on $\BR^d\setminus D$. By  Dynkin's formula,
\[
u(x)+\mathbb E_xRf_\varepsilon(X_{\tau_D})=Rf_\varepsilon(x),\quad x\in D.
\]
Set $v(x)= Rf_\varepsilon(x)$, $h(x)=
\mathbb E_xRf_\varepsilon(X_{\tau_D})$, $x\in \BR^d$. By the
Calder\'on-Zygmund $L^p$-theory,
\[
\varepsilon\ge \|f_\varepsilon\|_{L^p(D)}
=\|f_\varepsilon\|_{L^p(\BR^d)}\ge c \|v\|_{W^{2\alpha,p}(\BR^d)}.
\]
Let $v_\delta=j_\delta*v$, where $j_\delta$ is the standard
mollifier. A  straightforward computation shows that
$\|v_\delta\|_{W^{2\alpha,p}(\BR^d)}\le
\|v\|_{W^{2\alpha,p}(\BR^d)}$. Hence $\varepsilon\ge
c\|v_\delta\|_{W^{2\alpha,p}(\BR^d)}$. Let $\xi\in C_c^\infty(D)$
be such that $\xi\ge\mathbf{1}_K$. By \cite[Theorem
1.4.1.1]{Grisvard}, there exists $c_\xi$ such that
\[
\|\xi v_\delta\|_{W^{2\alpha,p}(\BR^d)}
\le c_\xi\|v_\delta\|_{W^{2\alpha,p}(\BR^d)}.
\]
Since $R$ is strongly Feller, $v$ is l.s.c. Therefore for a
sufficiently small $\delta>0$, $v_\delta\ge \mathbf{1}_K$. Of
course $\xi v_\delta\ge \mathbf{1}_K$, $\xi v_\delta\in
C_c^\infty(D)$ and
\[
\varepsilon\ge c c_\xi \|\xi v_\delta\|_{W^{2\alpha,p}(\BR^d)}
\ge c c_\xi \|\xi v_\delta\|_{W^{2\alpha,p}(D)}.
\]
Since $\varepsilon>0$ was arbitrary, this implies that $\mbox{\rm
Cap}_{W^{2\alpha,p}}(K)=0$.
\end{proof}

\begin{remark}
The assertion of Proposition \ref{prop8.1} holds true for  $p\in
(1,2)$  and $\alpha=1/2$ (the proof is analogous to the proof
given above). In case  $p\in (1,2)$ and $\alpha\neq 1/2$ it is not
true that $\|f\|_{L^p(\BR^d)}\sim \|Rf\|_{W^{2\alpha,p}(\BR^d)}$
for $f\in L^p(\BR^d)$. As a consequence, in that case Proposition
\ref{prop8.1} does not hold as stated. However, it holds true if
in its formulation we replace the space $W^{2\alpha,p}(\BR^d)$ by
the Besov space $B^{2\alpha}_{p,2}(\BR^d)$ (see \cite[Theorem 5,
page 155]{Stein}).
\end{remark}

\begin{theorem}
Let $V$ be a positive function in $L^p(D;m)$ for some $p\ge 2$ and
$u\in L^1(D;m)\cap L^1(E;V\cdot m)$ be a positive function such
that 
\[
\langle u,-\Delta^\alpha \xi\rangle +\langle u\cdot V,\xi\rangle\ge0,\quad \xi\in C_c^\infty(D),\, \xi\ge 0.
\]
Let
$Z=\{x\in D:\limsup_{r\rightarrow0^+}I^{(\alpha)}_r(u)(x)=0\}$.
If  $\mbox{\rm Cap}_{W^{2\alpha,p}}(Z)>0$, then $u=0$ $m$-a.e.
\end{theorem}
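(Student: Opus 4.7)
The plan is to imitate the corresponding proof for the Dirichlet Laplacian at the end of Section 7.1, substituting Proposition \ref{prop8.1} for Remark \ref{rem7.czl} and Theorem \ref{th7.1b.rest} for Theorem \ref{th7.1rest}. Set $\nu:=V\cdot m$, which is smooth by Lemma \ref{lm6.1}. The strategy is to locate a point of $Z$ lying inside $E_\nu$ via capacity estimates, and then apply Theorem \ref{th7.1b.rest} at that point.

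First I would establish $C_p(Z)>0$. Proposition \ref{prop8.1} gives the equivalence of $C_p$ and $\mbox{Cap}_{W^{2\alpha,p}}$ on compact subsets of $D$ (using $p\ge 2$); this extends to arbitrary subsets by the inner regularity of both capacities with respect to compact sets. Hence $\mbox{Cap}_{W^{2\alpha,p}}(Z)>0$ forces $C_p(Z)>0$.

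Next I would show $C_p(N_\nu)=0$ via the inclusion $N_\nu\subset \{RV=\infty\}$ combined with the Meyers theorem on thin sets for the $p$-Riesz capacity, exactly as in the proof of Theorem \ref{th6.1}. Together with $C_p(Z)>0$ this forces $Z\cap E_\nu\neq\emptyset$, so we can pick $x_0\in E_\nu$ with $\limsup_{r\to 0^+}I^{(\alpha)}_r(u)(x_0)=0$. Theorem \ref{th7.1b.rest} then yields $u=0$ q.e., which in particular gives $u=0$ $m$-a.e.

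The only subtlety is that Theorem \ref{th7.1b.rest} is stated for a quasi-continuous $u$, whereas here we merely assume $u\in L^1(D;m)\cap L^1(D;V\cdot m)$. This is harmless: $I^{(\alpha)}_r(u)(x)$ depends only on the $m$-equivalence class of $u$, so the defining condition of $Z$ is invariant under replacing $u$ by any $m$-version; and via Lemma \ref{lm7.1} together with the Feynman--Kac construction in Proposition \ref{prop5.1}, the variational inequality automatically produces a finely-continuous $m$-version $\check u$ on $E_\nu$ whose value at $x_0$ is precisely the limit $\lim_{r\to0^+}I^{(\alpha)}_r(u)(x_0)$ by Lemma \ref{lm7.fcvf}. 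Hence no new analytic work is required beyond assembling Proposition \ref{prop8.1}, the Meyers bound $C_p(N_\nu)=0$, and Theorem \ref{th7.1b.rest}; the main obstacle, if any, is simply the bookkeeping verifying that these three ingredients compose correctly in the fractional setting.
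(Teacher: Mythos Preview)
Your proposal is correct and follows essentially the same route as the paper: deduce $C_p(Z)>0$ from Proposition \ref{prop8.1}, use $C_p(N_\nu)=0$ (as in the proof of Theorem \ref{th6.1}) to find a point of $Z$ in $E_\nu$, and then invoke Theorem \ref{th7.1b.rest}. Your extra discussion of the quasi-continuity hypothesis and of extending Proposition \ref{prop8.1} from compacts to general sets is in fact more careful than the paper, which applies these ingredients without comment.
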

\begin{proof}
By Proposition \ref{prop8.1}, $C_p(Z)>0$. Let $\nu=V\cdot m$.
Since $C_p(N_{\nu})=0$ (see the proof of Theorem \ref{th6.1}),
$E_{\nu}\cap Z\neq\emptyset$. Hence, by Theorem \ref{th7.1b.rest},
$u=0$ $m$-a.e.
\end{proof}

\subsection*{Acknowledgements}
{\small This work was supported by Polish National Science Centre
(Grant No. 2017/25/B/ST1/00878).}


\begin{thebibliography}{30}

\bibitem{AH}
Adams, D.R., Hedberg, L.I.: {\em Function Spaces and Potential Theory.}
Springer-Verlag, Berlin, 1996.

\bibitem{ABR}
Albeverio, S.,  Brasche, J.,  R\"ockner, M.: Dirichlet forms and generalized Schr\"odinger
operators. {\em Lecture Notes in Physics} {\bf 345}, Springer, Berlin, 1989, pp. 1--42.



\bibitem{AM1}
Albeverio, S.,  Ma, Z.:
Additive functionals, nowhere Radon and Kato class smooth measures associated with Dirichlet forms. {\em Osaka J. Math.} {\bf 29} (1992) 247--265.

\bibitem{AR}
Albeverio, S.,  R\"ockner, M.: Classical Dirichlet forms on topological vector spaces - closability and a Cameron-Martin formula.  {\em J. Funct. Anal.} {\bf 88} (1990) 395--436.

\bibitem{Ancona}
Ancona, A.: Une propri\'et\'e d'invariance des ensembles absorbants par perturbation d'un op\'erateur elliptique. {\em Commun. Partial Differ. Equ.} {\bf 4} (1979) 321--337.

\bibitem{BarriosMedina}
Barrios, B.,  Medina, M.: Strong maximum principles for fractional elliptic
and parabolic problems with mixed boundary conditions. {\em Proceedings of the Royal Society of Edinburgh}. DOI:10.1017/prm.2018.77.

\bibitem{BST}
Bertsch, M., Smarrazzo, F.,  Tesei, A.:
A note on the strong maximum principle. {\em J. Differential Equations} {\bf 259} (2015) 4356--4375.


\bibitem{BB}
B\'enilan, P., Brezis, H.: Nonlinear problems related to the
Thomas-Fermi equation. {\em J. Evol. Equ.} {\bf 3} (2004) 673--770.

\bibitem{BC}
Beznea, L., C\^impean, I.: Quasimartingales associated to Markov processes.
{\em Trans. Amer. Math. Soc.} {\bf 370} (2018) 7761--7787.

\bibitem{BCR}
Beznea, L., C\^impean, I., R\"ockner, M.:
Irreducible recurrence, ergodicity, and extremality of invariant measures for resolvents.
{\em Stochastic Process. Appl.} {\bf 128} (2018) 1405--1437.


\bibitem{Bil} Billingsley, P., Convergence of probability measures. Second edition. Wiley Series in Probability and Statistics: Probability and Statistics. 
A Wiley-Interscience Publication. John Wiley \& Sons, Inc., New York, 1999.

\bibitem{BL}
Biswas, A.,  Lorinczi, J.: Hopf’s lemma for viscosity solutions to a class of non-local equations with applications. 
{\em Nonlinear Anal.} {\bf 204} (2021) Paper No. 112194.

\bibitem{BG}
Blumenthal, M.R., Getoor, R.K.: {\em Markov Processes and
Potential Theory}. Academic Press, New York and London, 1968.

\bibitem{BV}
Bucur, C.,  Valdinoci, E.:  Nonlocal Diffusion and Applications.{\em  Lecture Notes of the Unione Matematica Italiana.}   {\bf  20}  (2016) Springer, Cham.

\bibitem{BP}
Brezis, H., Ponce, A. C.: Remarks on the strong maximum principle.
{\em Differential Integral Equations} {\bf 16} (2003) 1--12.


\bibitem{Calabi}
Calabi, E.:  An extension of E. Hopf's maximum principle with an application to
Riemannian geometry. {\em Duke Math. J.} {\bf 25} (1958)  45--56.



\bibitem{CW}
Chung, K. L., Walsh, J. B.: Meyer's theorem on predictability. {\em Z. Wahrscheinlichkeitstheorie und Verw. Gebiete} {\bf 29} (1974) 253--256.


\bibitem{CJPS}
Cinlar, E., Jacod, J., Protter, P.,  Sharpe, M.J.:
Semimartingales and Markov Processes. Z.
Wahrscheinlichkeitstheorie verw. Gebiete {\bf 54}, 161--219 (1980)

\bibitem{ChungZhao}
Chung, K.L.,  Zhao, Z.: {\em From Brownian Motion to Schr\"odinger's Equation},
Springer-Verlag, Berlin, 1995.

\bibitem{Juan}
D\'avila, J.: A strong maximum principle for the Laplace equation with
mixed boundary condition.
{\em J. Funct. Anal.} {\bf 183} (2001)  231--244.

\bibitem{FOT}
Fukushima, M., Oshima, Y., Takeda, M.: {\em Dirichlet forms and
symmetric Markov processes.  Second revised and extended edition.}
Walter de Gruyter, Berlin, 2011.

\bibitem{FS}
Fukushima, M., Shima, T.: On a spectral analysis for the Sierpi\'nski gasket. {\em Potential Anal.} {\bf 1} (1992) 1--35.

\bibitem{Getoor}
 Getoor, R.K.: Transience and recurrence of Markov processes, in: S\'eminaire De Probabilit\'es (Strasbourg) XIV,
in: {\em Lecture Notes in Math.} {\bf  784} Springer (1980)  397--409.

\bibitem{GetoorGlover}
Getoor, R.K., Glover, J.: Riesz decomposition in Markov process
theory. {\em Trans. Amer. Math. Soc.} {\bf 285} (1984) 107--132.

\bibitem{Grisvard}
Grisvard, P.: {\em Elliptic problems in nonsmooth domains}.
Monographs and Studies in Mathematics, {\bf 24}. Pitman (Advanced Publishing Program), Boston, MA, 1985. 

\bibitem{Grzywny}
Grzywny, T.:  Intrisic ultracontractivity for L\'evy processes.
{\em  Probab. Math. Statist.} {\bf 28}  (2008) 91--106.

\bibitem{GT}
Gilbarg, D.; Trudinger, N. S. {\em Elliptic partial differential equations of second order.} 
Reprint of the 1998 edition. Classics in Mathematics. Springer-Verlag, Berlin, 2001.

\bibitem{HN}
Hansen, W.,  Netuka, I.: Semipolar Sets and Intrinsic Hausdorff Measure
{\em Potential Analysis} {\bf 51} (2019) 49--69.

\bibitem{Hopf}
Hopf, E.:  {\em Elementare Bemerkungen \"uber die L\"osungen partieller Differentialgleichungen
zweiter Ordnung vom elliptischen Typus.}, Sitzungsberichte, Preussische Akademie der
Wissenschaften, {\bf 19} (1927) 147--152.

\bibitem{KK}
Kang, J.,  Kim, P.:  On estimates of Poisson kernels for symmetric L\'evy processes. 
{\em J. Korean Math. Soc. } {\bf 50} (2013) 1009--1031





\bibitem{K:NA1}
Klimsiak, T.:  Schr\"odinger equations with smooth measure potential and general measure data. 
{\em Nonlinear Anal.} {\bf 218} (2022) Paper No. 112774

\bibitem{Kwasnicki}
Kwa\'snicki, M.: Fractional Laplace operator and its properties.
{\em Handbook of fractional calculus with applications. Vol. 1}.
De Gruyter, Berlin, 2019, pp. 159--193.

\bibitem{Littman}
Littman, W.: A strong maximum principle for weakly $L$-subharmonic functions. {\em  J. Math. Mech.}  {\bf 8} (1959)  761--770.

\bibitem{MR}
Ma, Z.-M., R\"ockner, M.: {\em Introduction to the Theory of
(Non-Symmetric) Dirichlet Forms}. Springer, Berlin, 1992.





\bibitem{Meyers}
Meyers, N.G.: A Theory of Capacities for Potentials of Functions
in Lebesgue Classes. {\em Math. Scand.} {\bf 26} (1970)  255--292.


\bibitem{OP1}
Orsina, L., Ponce, A.C.: Semilinear elliptic equations and
systems with diffuse measures. {\em J. Evol. Equ}. {\bf 8} (2008)
781--812.

\bibitem{OrsinaPonce}
Orsina, L.,  Ponce, A.C.: Strong maximum principle for
Schr\"odinger operators  with singular potential. {\em Ann. Inst.
H. Poincar\'e Anal. Non Lin\'eaire} {\bf 33} (2016) 477--493.

\bibitem{OP}
Orsina, L., Ponce, A.C.: On the nonexistence of Green's function and failure
of the strong maximum principle.  {\em J. Math. Pures Appl.} {\bf 134} (2020) 72--121

\bibitem{Oshima}
Oshima, Y.: {\em Semi-Dirichlet forms and Markov processes.} De Gruyter Studies in Mathematics {\bf 48} Berlin, 2013. x+284 pp.


\bibitem{Pazy}
Pazy, A.:  {\em Semigroups of linear operators  and applications
to partial differential equations.}
Springer-Verlag, New York, 1983.




\bibitem{Protter}
Protter, P.: {\em Stochastic Integration and Differential
Equations}. Second Edition. Springer, Berlin, 2004.

\bibitem{Ramaswamy}
Ramaswamy, S.: Fine connectedness and $\alpha$-excessive functions. {\em Ann. Inst. Fourier} (Grenoble) {\bf 22} (1972) 165--168.

\bibitem{SU}
Schilling, R. L.  Uemura, T.:
On the structure of the domain of a symmetric jump-type Dirichlet form.
{\em Publ. Res. Inst. Math. Sci. } {\bf 48} (2012) 1--20.

\bibitem{Stampacchia}
Stampacchia, G.:  Le probl\`eme de Dirichlet pour les \`equations elliptiques du second ordre \`a
coefficients discontinus. {\em Annales de l'Institut Fourier, Grenoble,} {\bf 15} (1965) 189--258.

\bibitem{Stein}
Stein, E.: {\em Singular Integrals and Differentiability Properties of Functions}. 
Princeton University Press, Princeton, 1970.



\end{thebibliography}
\end{document}